\theoremstyle{plain}
\newtheorem{theorem}{Theorem}[section]
\newtheorem{proposition}[theorem]{Proposition}
\theoremstyle{definition}
\theoremstyle{remark}
\newtheorem{remark}{Remark}
\newcommand{\tabincell}[2]{\begin{tabular}{@{}#1@{}}#2\end{tabular}}
\begin{document}

\articletype{}

\title{A Highly Efficient Algorithm for Solving Exclusive Lasso Problems \thanks{In memory of Oleg Burdakov.}\thanks{The first two authors contribute equally.}}

\author{
\name{Meixia Lin\textsuperscript{a}, Yancheng Yuan\textsuperscript{b}, Defeng Sun\textsuperscript{b} \thanks{CONTACT Defeng Sun. Email: {\tt defeng.sun@polyu.edu.hk}.}, and Kim-Chuan Toh\textsuperscript{c}}
\affil{\textsuperscript{a}Engineering Systems and Design, Singapore University of Technology and Design, Singapore; \textsuperscript{b}Department of Applied Mathematics, The Hong Kong Polytechnic University, Hung Hom, Hong Kong;  \textsuperscript{c}Department of Mathematics and Institute of Operations Research and Analytics, National University of Singapore, Singapore}
}

\maketitle

\begin{abstract}
The exclusive lasso (also known as elitist lasso) regularizer has become popular recently due to its superior performance on intra-group feature selection. Its complex nature poses difficulties for the computation of high-dimensional machine learning models involving such a regularizer. In this paper, we propose a highly efficient dual Newton method based proximal point algorithm (PPDNA) for solving large-scale exclusive lasso models. As important ingredients, we systematically study the proximal mapping of the weighted exclusive lasso regularizer and the corresponding generalized Jacobian. These results also make popular first-order algorithms for solving exclusive lasso models more practical. Extensive numerical results are presented to demonstrate the superior performance of the PPDNA against other popular numerical algorithms for solving the exclusive lasso problems.
\end{abstract}

\begin{keywords}
Exclusive lasso, proximal point algorithm, dual Newton method
\end{keywords}

\begin{amscode}
90C06; 90C25; 90C90
\end{amscode}

\section{Introduction}
\label{sec:intro}

For a given feature matrix $A=[a_1,a_2,\cdots,a_n] \in \mathbb{R}^{m \times n}$, we are interested in the machine learning models of the form:
\begin{align}
	\min_{x \in \mathbb{R}^n} \ \Big\{f(x):=  h(Ax)-\langle c,x\rangle + \lambda p(x)\Big\},\label{eq: general_ml_model}
\end{align}
where $c \in \mathbb{R}^{n}$ is a given vector, $h: \mathbb{R}^m \to \mathbb{R}$ is a convex twice continuously differentiable function, $p: \mathbb{R}^n \to (-\infty,+\infty]$, a closed and proper convex function, is a regularizer which usually enforces feature selection to prevent overfitting, and $\lambda > 0$ is a hyper-parameter which controls the trade-off between the loss function and the regularizer.

Many regularizers have been proposed to enforce sparsity with desirable structure in the predictors learned by machine learning models. For example, the lasso model \citep{tibshirani1996regression} can induce sparsity in the predictors but without structured patterns, and the group lasso model \citep{yuan2006model} can induce inter-group level sparsity. In some applications, intra-group level sparsity is desirable, which means that not only features from different groups, but also features in a seemingly cohesive group are competing to survive. One application comes from performing portfolio selections both across and within sectors in order to diversify the risk across different sectors. To achieve this intra-group sparsity, the exclusive lasso regularizer was proposed in \citep{zhou2010exclusive} (also named as elitist lasso \citep{kowalski2009sparse}), originally for multi-task learning. Since then, it has also been widely used in other applications  such as image processing \citep{zhang2016robust}, sparse feature clustering \citep{yamada2017localized} and nuclear magnetic resonance (NMR) spectroscopy \citep{campbell2017within}. Let $w\in \mathbb{R}_{++}^n$ be a weight vector and $\mathcal{G} := \{g_1,\cdots,g_l\}$ be an index partition of the features such that $\bigcup_{j=1}^l g_j = \{1, 2, \dots, n\}$ and $g_j \bigcap g_k = \emptyset$ for any $j\neq k$. The corresponding  weighted exclusive lasso regularizer is defined as
\begin{align}
	\Delta^{\mathcal{G},w}(x) :=\sum_{j=1}^l \|w_{g_j} \circ x_{g_j}\|_1^2, \quad \forall\, x \in \mathbb{R}^n,\label{eq: exclusive-lasso-regularizer}
\end{align}
where `$\circ$' denotes the Hadamard product, and $x_{g_j}$ denotes the sub-vector extracted from $x$ based on the index set $g_j$. Naturally, when solving exclusive lasso models, we can expect that each $x_{g_j}$ has nonzero coordinates under mild conditions, which means that every group has representatives.

Existing algorithms for solving exclusive lasso models, such as the iterative least squares algorithm (ILSA) \citep{kong2014exclusive,yamada2017localized}, the coordinate descent (CD) method \citep{campbell2017within}, are very time-consuming to obtain a solution with moderate accuracy, even for problems with medium sizes. In addition, popular first-order algorithms, such as the accelerated proximal gradient method (APG) \citep{zhang2016robust}, FISTA \citep{beck2009fast} and the alternating direction method of multipliers (ADMM) \citep{eckstein1992douglas,glowinski1975approximation}, have not been widely used to solve exclusive lasso models. The main reason may lie in the fact that the proximal mapping of the exclusive lasso regularizer, which is the key ingredient for the efficient implementation of the algorithms mentioned above, has not been systematically studied yet. Yoon and Hwang provided a procedure for computing the proximal mapping of $\Delta^{\mathcal{G},w}(\cdot)$ in \citep{yoon2017combined} but unfortunately it is mathematically incorrect. Kowalski mentioned the proximal mapping in \citep{kowalski2009sparse}, but the derivation contains some errors, and this result is not known to most researchers in the optimization and machine learning communities. In this paper, we systematically study the exclusive lasso regularizer, and provide an $O(n\log n )$ routine to compute the proximal mapping of the general weighted exclusive lasso regularizer \eqref{eq: exclusive-lasso-regularizer}. Such an $O(n\log n)$ procedure is important for the practical efficiency of many algorithmic frameworks, such as APG and ADMM, for solving exclusive lasso models. However, as we shall see in the numerical experiments, even with the $O(n\log n)$ procedure to compute the proximal mapping, first-order algorithms, such as APG and ADMM, are not efficient enough. To overcome this computational challenge, we design a highly efficient second-order type algorithm, the dual Newton method based proximal point algorithm (PPDNA), to solve exclusive lasso models. As a key ingredient of the PPDNA, we carefully derive the generalized Jacobian of the proximal mapping of the weighted exclusive lasso regularizer. We also analyse the underlying structures of the generalized Jacobian to facilitate its efficient computation within the semismooth Newton method. Numerical results demonstrate the superior performance of the PPDNA against ADMM, APG, CD, and ILSA for solving exclusive lasso models.

We summarize our main contributions in this paper as follows.
\begin{itemize}[noitemsep,topsep=0pt]
	\item[1.] We develop a highly efficient dual Newton method based proximal point algorithm to solve the exclusive lasso model. We prove that the error bound condition holds for commonly used exclusive lasso models, which guarantees the superlinear convergence of the preconditioned proximal point algorithm.
	\item[2.] As key ingredients of the PPDNA, we systematically study the proximal mapping of the weighted exclusive lasso regularizer, and the corresponding generalized Jacobian. These results are also critical in computing the key projection step of various first-order algorithms for solving exclusive lasso models.
	\item[3.] We demonstrate numerically that the PPDNA is highly efficient and robust when comparing to the state-of-the-art algorithms for solving exclusive lasso models. Furthermore, we apply the exclusive lasso model in some real application problems including the index exchange-traded fund, and image and text classifications.
\end{itemize}

The rest of the paper is organized as follows. In Section \ref{sec:pppa}, we design a preconditioned proximal point algorithm for solving exclusive lasso models. As important ingredients, in Section \ref{sec:proxJacobian}, we systematically study the weighted exclusive lasso regularizer, through providing an $O(n\log n)$ procedure to compute the proximal mapping and its generalized Jacobian. Based on these results, we develop a dual Newton method for solving the subproblems of the preconditioned proximal point algorithm in Section \ref{sec:ssn}. Numerical experiments on various synthetic data are presented in Section \ref{sec:numerical}, which demonstrate the superior performance of the PPDNA against the state-of-the-art algorithms for solving the exclusive lasso models. More interesting experiments on real applications of the exclusive lasso model are also presented. In the end, we conclude the paper in Section \ref{sec:conclusion}.

\vspace{0.2cm}
\noindent\textbf{Notation and preliminaries:} We use $\mathbb{R}^n$ to denote the space of  $n$-dimensional vectors and  $\mathbb{R}^n_{+}$ (resp., $\mathbb{R}^n_{++}$) to denote the space of vectors in $\mathbb{R}^n$ with nonnegative (resp., positive) elements. We let $\mathbb{S}^n$ be the space of all $n\times n$ real symmetric matrices. For any $z\in \mathbb{R}$, ${\rm sign}(z)$ denotes the sign function of $z$, that is ${\rm sign}(z) = 1$ if $z>0$; ${\rm sign}(z) = 0$ if $z=0$; ${\rm sign}(z) = -1$ if $z<0$. Define $z^{+}:=\max\{z,0\}$, $z^{-}:=\min\{z,0\}$. We use `${\rm Diag}(x)$' to denote the diagonal matrix whose diagonal is given by the vector $x$, and use `${\rm Diag}(X_1,\cdots,X_n)$' to denote the block diagonal matrix whose $i$-th diagonal block is the matrix $X_i$, $i=1,\cdots,n$.  Let ${\cal M}:\mathbb{R}^n\rightarrow \mathbb{R}^n$ be any self-adjoint positive semidefinite linear operator. We define $\langle x,x'\rangle_{\cal M}:=\langle x,{\cal M}x'\rangle$, and $\|x\|_{\cal M}:=\sqrt{\langle x,x\rangle_{\cal M}}$ for all $x,x'\in \mathbb{R}^n$. For a given subset ${\cal C}$ of $\mathbb{R}^n$, we denote the weighted distance of $x\in \mathbb{R}^n$ to ${\cal C}$ as ${\rm dist}_{\cal M}(x,{\cal C}):=\inf_{x'\in {\cal C}}\|x-x'\|_{\cal M}$. The largest eigenvalue of ${\cal M}$ is denoted as $\lambda_{\max}({\cal M})$.

Let  $q:\mathbb{R}^n\rightarrow (-\infty,\infty]$ be a  closed and proper convex function. The conjugate function of $q$ is defined as $q^*(z):=\sup_{x\in\mathbb{R}^n}\{\langle x,z\rangle-q(x)\}$. The Moreau envelope of $q$ at $x$ is defined by
\begin{align*}
	{\rm E}_q(x):=\min_{y\in \mathbb{R}^n}\Big\{ q(y)+\frac{1}{2}\|y-x\|^2\Big\},
\end{align*}
and the corresponding proximal mapping ${\rm Prox}_q(x)$ is defined as the unique optimal solution of the above problem. It is known that for any $x\in \mathbb{R}^n$, $\nabla {\rm E}_q(x)=x-{\rm Prox}_q(x)$, and ${\rm Prox}_q(\cdot)$ is Lipschitz continuous with modulus $1$ \citep{moreau1965proximite,rockafellar1976monotone}.

In order to study the weighted exclusive lasso regularizer $\Delta^{\mathcal{G},w}(\cdot)$ defined in \eqref{eq: exclusive-lasso-regularizer}, we use the following notations. For $j=1\cdots,l$, we define the linear mapping ${\cal P}_j:\mathbb{R}^n \rightarrow \mathbb{R}^{|g_j|}$ as ${\cal P}_j x=x_{g_j}$ for all $x\in \mathbb{R}^n$, and ${\cal P}=[{\cal P}_1;\cdots;{\cal P}_l]$. Let $n_j=\sum_{k=1}^j |g_k|$ and $n_0=0$. Denote $x^{(j)}$ as the sub-vector extracted from $x$ based on the index set $\{n_{j-1}+1,n_{j-1}+2,\cdots,n_{j}\}$ for $j=1,\cdots,l$. According to these notations, we have
\begin{align}
	\Delta^{\mathcal{G},w}(x) =\sum_{j=1}^l\|({\cal P}w)^{(j)}\circ ({\cal P}x)^{(j)}\|_1^2, \quad \forall\, x \in \mathbb{R}^n.\label{eq: reformulation_exclusive}
\end{align}

\section{A preconditioned proximal point algorithm for exclusive lasso models}
\label{sec:pppa}
We focus on the machine learning model \eqref{eq: general_ml_model} with the weighted exclusive lasso regularizer defined in \eqref{eq: exclusive-lasso-regularizer}, which is also called the weighted exclusive lasso model. Denote the optimal solution set of the problem \eqref{eq: general_ml_model} as $\Omega$. Throughout this paper, we assume that the solution set $\Omega$ is nonempty and compact. For many popular machine learning models involving the exclusive lasso regularizers, this assumption is satisfied automatically, as discussed in \citep[Section 2.1]{zhou2017unified}.

As we have mentioned in Section \ref{sec:intro}, existing algorithms \citep{campbell2017within,kong2014exclusive,yamada2017localized} face difficulties even for solving medium-scale exclusive lasso models to a moderate accuracy. To overcome this challenge, in this paper, we aim to design a highly efficient preconditioned proximal point algorithm (PPA) to solve the convex composite programming problem \eqref{eq: general_ml_model}. We further prove that for exclusive lasso models, a certain error bound condition holds, which guarantees that the preconditioned PPA for solving the weighted exclusive lasso models has an asymptotic superlinear convergence rate.

\subsection{A preconditioned PPA algorithmic framework}
For any starting point $x^0\in \mathbb{R}^n$, the preconditioned PPA generates a sequence $\{x^k\}\subseteq \mathbb{R}^n$ by the following approximate rule for solving \eqref{eq: general_ml_model}:
\begin{align}
	x^{k+1}\approx {\cal P}_k(x^k)
	:=\underset{x\in\mathbb{R}^n}{\arg\min}\ \left\{
	\begin{aligned}
    f_k(x):=  & \ h(Ax)  - \langle c,x\rangle +\lambda  p(x)\\
	&+\frac{1}{2\sigma_k}\|x-x^k\|^2+\frac{\tau}{2\sigma_k}\|Ax-Ax^k\|^2
	\end{aligned}
	\right\},\label{eq:pre_ppa}
\end{align}
where $\{\sigma_k\}$ is a sequence of nondecreasing positive real numbers $(\sigma_k\uparrow \sigma_{\infty}\leq \infty)$, $\tau> 0$ is a given parameter.

Comparing to the classical proximal point algorithm, the addition of the second proximal term $\frac{\tau}{2\sigma_k}\|Ax-Ax^k\|^2$ is critical for us to obtain the dual of \eqref{eq:pre_ppa} as a smooth unconstrained problem. We equivalently rewrite the minimization problem \eqref{eq:pre_ppa} as a constrained optimization problem:
\begin{align}
	\min_{x\in \mathbb{R}^n, y\in \mathbb{R}^m} \ \Big\{
	h(y)  - \langle c,x\rangle +\lambda  p(x) +\frac{1}{2\sigma_k}\|x-x^k\|^2+\frac{\tau}{2\sigma_k}\|y-Ax^k\|^2 \mid Ax - y = 0
	\Big\}.\label{eq:pre_ppa_constrained}
\end{align}	
By introducing the Lagrangian multiplier $u\in\mathbb{R}^m$, the associated Lagrangian function is
\begin{align*}
	l(x,y;u)&:= h(y)  - \langle c,x\rangle +\lambda  p(x) +\frac{1}{2\sigma_k}\|x-x^k\|^2+\frac{\tau}{2\sigma_k}\|y-Ax^k\|^2 + \langle u, Ax - y\rangle\\
	&= h(y) + \frac{\tau}{2\sigma_k}\|y-Ax^k -\frac{\sigma_k}{\tau}u\|^2 + \frac{\tau}{2\sigma_k}\|Ax^k\|^2 - \frac{\tau}{2\sigma_k}\| Ax^k +\frac{\sigma_k}{\tau}u\|^2\\
	& \quad +\,\lambda p(x) + \frac{1}{2\sigma_k} \|x-x^k-\sigma_k c +\sigma_k A^T u\|^2 +\frac{1}{\sigma_k}\|x^k\|^2 - \frac{1}{2\sigma_k} \|x^k+\sigma_k c -\sigma_k A^T u\|^2.
\end{align*}
Therefore, the dual problem of \eqref{eq:pre_ppa}, i.e., $\max_{u} \min_{x, y} l(x,y;u)$, takes the form of
\begin{align}
	\max_{u\in \mathbb{R}^m}\ \Big\{\psi_k(u)&:= -\frac{\tau}{2\sigma_k}\|Ax^k+\frac{\sigma_k}{\tau}u\|^2 +\frac{\tau}{\sigma_k}{\rm E}_{\sigma_k h/\tau}(Ax^k+\frac{\sigma_k}{\tau}u) +\frac{\tau}{2\sigma_k}\|Ax^k\|^2\notag\\
	&- \frac{1}{2\sigma_k}\|x^k+\sigma_k c-\sigma_k A^T u\|^2 +\frac{1}{\sigma_k}{\rm E}_{\sigma_k\lambda  p}(x^k+\sigma_k c-\sigma_kA^Tu)+\frac{1}{2\sigma_k}\|x^k\|^2\Big\}.\label{eq:ATA_D}
\end{align}
Moreover, the Karush–-Kuhn–-Tucker(KKT) conditions associated with \eqref{eq:pre_ppa_constrained} and \eqref{eq:ATA_D} are
\begin{align}
	\left\{
	\begin{aligned}
		& x={\rm Prox}_{\sigma_k \lambda p}(x^k + \sigma_k c - \sigma_k A^Tu),\\
		& 0 = \nabla h(y) +\tau (y-Ax^k)/\sigma_k -u,\\
		& Ax-y=0.
	\end{aligned}
	\right. \label{eq: ppa_kkt}
\end{align}

Due to the continuous differentiability of the Moreau envelope, the objective function in the optimization problem \eqref{eq:ATA_D} is continuously differentiable. The Lipschitz continuity of the proximal mapping further inspires us to design a highly efficient nonsmooth Newton algorithm to solve the problem \eqref{eq:ATA_D}. The details are discussed in Section \ref{sec:ssn}. As long as we obtain a solution $\bar{u}^{k+1}$ to the problem \eqref{eq:ATA_D}, according to the first equation in the KKT system \eqref{eq: ppa_kkt}, the update of $x$ in the preconditioned PPA iteration \eqref{eq:pre_ppa} can be given as
\begin{align*}
	\bar{x}^{k+1}={\rm Prox}_{\sigma_k \lambda p}(x^k + \sigma_k c - \sigma_k A^T\bar{u}^{k+1}).
\end{align*}

We give the full description of the preconditioned PPA for solving the general machine learning model \eqref{eq: general_ml_model} in Algorithm \ref{alg:ppdna}. Note that the subproblem of the preconditioned PPA is allowed to be solved approximately. To ensure the convergence of the preconditioned PPA, we use the implementable stopping criteria \eqref{eq:stopA_ATA} and \eqref{eq:stopB_ATA} based on the duality gap of \eqref{eq:pre_ppa} and \eqref{eq:ATA_D}.

\begin{algorithm}[H]\small
	\caption{A preconditioned proximal point algorithm for \eqref{eq: general_ml_model}}
	\label{alg:ppdna}
	\begin{algorithmic}[1]
		\STATE \textbf{Input}: $\tau>0$, $0<\sigma_0\leq \sigma_{\infty}\leq \infty$; summable nonnegative sequences $\{\epsilon_k\}_{k=1}^{\infty}$, $\{\delta_k\}_{k=1}^{\infty}$ with $\delta_k<1$, $\forall k$.
		\STATE \textbf{Output}: an approximate optimal solution $x$ to \eqref{eq: general_ml_model}.
		\STATE \textbf{Initialization}: choose $x^0\in \mathbb{R}^n$, $k=0$.
		\REPEAT
		\STATE {\bfseries Step 1}. Find an approximate maximizer $u^{k+1}$ to the problem \eqref{eq:ATA_D} such that the stopping criteria
		\begin{align}
			f_k(x^{k+1})-\psi_k(u^{k+1})&\leq \frac{\epsilon_k^2}{2\sigma_k},\tag{A}\label{eq:stopA_ATA}\\
			f_k(x^{k+1})-\psi_k(u^{k+1})&\leq \frac{\delta_k^2}{2\sigma_k} \|x^{k+1}-x^k\|^2+\frac{\tau\delta_k^2}{2\sigma_k} \|Ax^{k+1}-Ax^k\|^2,\tag{B}\label{eq:stopB_ATA}
		\end{align}
		are satisfied with
		\begin{align*}
			x^{k+1} = {\rm Prox}_{\sigma_k\lambda  p}(x^k+\sigma_k c-\sigma_k A^Tu^{k+1}).
		\end{align*}
		\\[3pt]
		\STATE {\bfseries Step 2}. Update $\sigma_{k+1} \uparrow \sigma_{\infty} \leq \infty$, $k\leftarrow k+1$.
		\UNTIL{Stopping criterion is satisfied.}
	\end{algorithmic}
\end{algorithm}

As for the parameters in the above algorithm, in practice, one can set $\tau = 1/\lambda_{\max}(AA^T)$, $\epsilon_k =\delta_k = 0.5/1.06^k$ and $\sigma_k = 3^{\left \lfloor{k/2}\right \rfloor}$. The following theorem states the global convergence and the asymptotic superlinear convergence rate of the preconditioned PPA for solving \eqref{eq: general_ml_model}.

\begin{theorem}\label{thm:convergence_ALM}
	Let $\{(x^k,u^k)\}$ be the sequence generated by Algorithm \ref{alg:ppdna}.
	\begin{enumerate}
		\item Suppose in {\bfseries Step 1}, the stopping criterion \eqref{eq:stopA_ATA} is satisified at each iteration. Then the sequence $\{x^k\}$ is bounded and $\{x^k\}$ converges to some $x^*\in \Omega$.
		\item Assume that there exists a constant $\kappa>0$ such that ${\cal T}_f:= \partial f$ satisfies the following error bound condition
		\begin{align}
			{\rm dist}(x,\Omega)\leq \kappa {\rm dist}(0,{\cal T}_f(x)),\quad \forall x\in \mathbb{R}^n   \mbox{ satisfying } {\rm dist}(x,\Omega)\leq \sum_{i=0}^{\infty}\epsilon_k+{\rm dist}_{{\cal M}}(x^0,\Omega), \label{error_bound}
		\end{align}
		where ${\cal M}:=I_n+\tau A^T A$ is a positive definite operator on $\mathbb{R}^n$. Suppose in {\bfseries Step 1}, the stopping criteria \eqref{eq:stopA_ATA} and \eqref{eq:stopB_ATA} are both satisified at each iteration. Then it holds for all $k\geq 0$ that
		\begin{align*}
			{\rm dist}_{{\cal M}}(x^{k+1},\Omega)\leq \mu_k {\rm dist}_{{\cal M}}(x^{k},\Omega),
		\end{align*}
		where $\mu_k=(1-\delta_k)^{-1}\Big[\delta_k+ (1+\delta_k)\kappa\zeta (\sigma_k^2+\kappa^2\zeta^2)^{-1/2}\Big]\rightarrow \mu_{\infty}=\kappa\zeta(
				\sigma_{\infty}^2+\kappa^2\zeta^2)^{-1/2}<1$, as $k\rightarrow \infty$, with $\zeta:=1+\tau\lambda_{\max}(A^T A)$.
	\end{enumerate}
\end{theorem}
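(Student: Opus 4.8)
The plan is to recognise the subproblem \eqref{eq:pre_ppa} as one inexact proximal point step for the maximal monotone operator ${\cal T}_f=\partial f$, but taken in the inner product induced by ${\cal M}=I_n+\tau A^TA$, and then to invoke the classical convergence theory of Rockafellar \citep{rockafellar1976monotone} (and its refinements for the linearly/superlinearly convergent regime). First I would note that $f_k(x)=f(x)+\frac{1}{2\sigma_k}\|x-x^k\|_{\cal M}^2$, so $f_k$ is strongly convex relative to $\|\cdot\|_{\cal M}$ with modulus $1/\sigma_k$, its unique minimiser is ${\cal P}_k(x^k)$, and the optimality condition reads ${\cal P}_k(x^k)=(I_n+\sigma_k{\cal M}^{-1}{\cal T}_f)^{-1}(x^k)$. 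Since $\langle{\cal M}^{-1}v-{\cal M}^{-1}v',x-x'\rangle_{\cal M}=\langle v-v',x-x'\rangle\geq0$ for $v\in{\cal T}_f(x)$, $v'\in{\cal T}_f(x')$, the operator ${\cal M}^{-1}{\cal T}_f$ is maximal monotone with respect to $\langle\cdot,\cdot\rangle_{\cal M}$, its zero set equals $\Omega=(\partial f)^{-1}(0)$ (nonempty and compact by assumption), and its resolvent is nonexpansive in $\|\cdot\|_{\cal M}$, with every point of $\Omega$ a fixed point.

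The second step is to convert the implementable, duality-gap-based criteria \eqref{eq:stopA_ATA}--\eqref{eq:stopB_ATA} into Rockafellar's residual-based criteria. By weak duality $\psi_k(u^{k+1})\leq f_k({\cal P}_k(x^k))$, while strong convexity gives $f_k(x^{k+1})-f_k({\cal P}_k(x^k))\geq\frac{1}{2\sigma_k}\|x^{k+1}-{\cal P}_k(x^k)\|_{\cal M}^2$; adding the two yields $f_k(x^{k+1})-\psi_k(u^{k+1})\geq\frac{1}{2\sigma_k}\|x^{k+1}-{\cal P}_k(x^k)\|_{\cal M}^2$. Hence \eqref{eq:stopA_ATA} forces $\|x^{k+1}-{\cal P}_k(x^k)\|_{\cal M}\leq\epsilon_k$ and \eqref{eq:stopB_ATA} forces $\|x^{k+1}-{\cal P}_k(x^k)\|_{\cal M}\leq\delta_k\|x^{k+1}-x^k\|_{\cal M}$, i.e.\ precisely criteria (A), (B) of Rockafellar in the ${\cal M}$-metric. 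Part 1 then follows: nonexpansiveness of the resolvent together with ${\cal M}$-projection onto $\Omega$ gives ${\rm dist}_{\cal M}(x^{k+1},\Omega)\leq{\rm dist}_{\cal M}(x^k,\Omega)+\epsilon_k$, so $\{x^k\}$ stays in a bounded set (using compactness of $\Omega$ and $\sum_k\epsilon_k<\infty$), and the standard Fej\'er-monotonicity argument yields convergence of $\{x^k\}$ to a single point $x^*\in\Omega$.

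For Part 2 I would transfer the error bound \eqref{error_bound} into the ${\cal M}$-metric. From $I_n\preceq{\cal M}\preceq\zeta I_n$ with $\zeta=1+\tau\lambda_{\max}(A^TA)$ one gets ${\rm dist}_{\cal M}(x,\Omega)\leq\sqrt{\zeta}\,{\rm dist}(x,\Omega)$ and, for $v\in{\cal T}_f(x)$, $\|{\cal M}^{-1}v\|_{\cal M}=\|v\|_{{\cal M}^{-1}}\geq\zeta^{-1/2}\|v\|$, hence ${\rm dist}(0,{\cal T}_f(x))\leq\sqrt{\zeta}\,{\rm dist}_{\cal M}(0,{\cal M}^{-1}{\cal T}_f(x))$; combining with \eqref{error_bound} gives ${\rm dist}_{\cal M}(x,\Omega)\leq\kappa\zeta\,{\rm dist}_{\cal M}(0,{\cal M}^{-1}{\cal T}_f(x))$ for $x$ in the prescribed neighbourhood. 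Since the iterates obey ${\rm dist}_{\cal M}(x^k,\Omega)\leq{\rm dist}_{\cal M}(x^0,\Omega)+\sum_i\epsilon_i$ (from Part 1) and ${\rm dist}(\cdot,\Omega)\leq{\rm dist}_{\cal M}(\cdot,\Omega)$, both $x^{k+1}$ and ${\cal P}_k(x^k)$ remain inside the region where \eqref{error_bound} holds, so the ${\cal M}$-metric error bound with modulus $\kappa\zeta$ is available at every step. Feeding this modulus and criterion (B) in the form $\|x^{k+1}-{\cal P}_k(x^k)\|_{\cal M}\leq\delta_k\|x^{k+1}-x^k\|_{\cal M}$ into the quantitative rate estimate for the inexact proximal point algorithm under a metric-subregularity/error-bound condition produces ${\rm dist}_{\cal M}(x^{k+1},\Omega)\leq\mu_k\,{\rm dist}_{\cal M}(x^k,\Omega)$ with exactly the stated $\mu_k$, and $\mu_k\to\mu_\infty=\kappa\zeta(\sigma_\infty^2+\kappa^2\zeta^2)^{-1/2}<1$ because $\sigma_k\uparrow\sigma_\infty$.

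The steps I expect to demand the most care are the passage between the two forms of the stopping criteria --- in particular keeping track that the strong-convexity modulus of $f_k$ is $1/\sigma_k$ with respect to $\|\cdot\|_{\cal M}$ and not to the Euclidean norm --- and the bookkeeping of constants when moving the error bound between the Euclidean and ${\cal M}$-geometries, so that the effective modulus comes out to be precisely $\kappa\zeta$ and the neighbourhood in \eqref{error_bound}, whose radius is $\sum_i\epsilon_i+{\rm dist}_{\cal M}(x^0,\Omega)$, genuinely contains $x^{k+1}$ and ${\cal P}_k(x^k)$ for every $k$. The remaining ingredients --- maximal monotonicity of ${\cal M}^{-1}\partial f$ in $\langle\cdot,\cdot\rangle_{\cal M}$, nonexpansiveness of its resolvent, and the abstract superlinear rate under an error bound --- are standard and can be quoted.
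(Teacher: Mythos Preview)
Your proposal is correct and follows essentially the same route as the paper: both recognise $f_k(x)=f(x)+\frac{1}{2\sigma_k}\|x-x^k\|_{\cal M}^2$, use strong convexity of $f_k$ in the ${\cal M}$-norm together with (weak/strong) duality to convert the gap-based criteria \eqref{eq:stopA_ATA}--\eqref{eq:stopB_ATA} into the Rockafellar residual criteria $\|x^{k+1}-{\cal P}_k(x^k)\|_{\cal M}\leq\epsilon_k$ and $\|x^{k+1}-{\cal P}_k(x^k)\|_{\cal M}\leq\delta_k\|x^{k+1}-x^k\|_{\cal M}$, and then invoke the established convergence/rate theory for inexact PPA under an error bound. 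The only cosmetic difference is that the paper, after deriving the residual criteria, cites \cite[Theorems~2.3 and~2.5]{li2020asymptotically} (which already packages the preconditioned-PPA analysis in the ${\cal M}$-metric), whereas you spell out the Fej\'er-monotonicity and the Euclidean-to-${\cal M}$ transfer of the error bound modulus to $\kappa\zeta$ explicitly.
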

\begin{proof}
	In order to prove the convergence result, we first need to characterize the stopping criteria \eqref{eq:stopA_ATA} and \eqref{eq:stopB_ATA}. According to the definition of ${\cal P}_k(x^k)$ in \eqref{eq:pre_ppa}, we have that ${\cal P}_k(x^k)=\arg\min f_k(x)$ and $0\in \partial f_k({\cal P}_k(x^k))$. It can be seen from \citep[Exercise 8.8]{rockafellar2009variational} that $\partial f_k(x)=\partial f(x)+(1/\sigma_k) {\cal M}(x-x^k)$, which means there exists $v\in \partial f({\cal P}_k(x^k))$ such that
	\begin{align*}
		0=v+\frac{1}{\sigma_k} {\cal M}({\cal P}_k(x^k)-x^k).
	\end{align*}
	Then it holds that
	\begin{align*}
		&f_k(x^{k+1})-f_k({\cal P}_k(x^k))=f(x^{k+1})-f({\cal P}_k(x^k))+\frac{1}{2\sigma_k}\|x^{k+1}-x^k\|_{{\cal M}}^2-\frac{1}{2\sigma_k}\|{\cal P}_k(x^k)-x^k\|_{{\cal M}}^2\\
		&\geq \langle v,x^{k+1}-{\cal P}_k(x^k)\rangle+\frac{1}{2\sigma_k}\langle x^{k+1}+{\cal P}_k(x^k)-2x_k,x^{k+1}-{\cal P}_k(x^k)\rangle_{{\cal M}}=\frac{1}{2\sigma_k}\|x^{k+1}-{\cal P}_k(x^k)\|_{{\cal M}}^2.
	\end{align*}
	By the strongly duality, we have $f_k({\cal P}_k(x^k))=\inf f_k=\sup \psi_k$. Thus we can see that
	\begin{align*}
		\frac{1}{2\sigma_k}\|x^{k+1}-{\cal P}_k(x^k)\|_{{\cal M}}^2\leq f_k(x^{k+1})-\inf f_k=f_k(x^{k+1})-\sup \psi_k\leq f_k(x^{k+1})-\psi_k(u^{k+1}).
	\end{align*}
	Therefore, we can see that the stopping criterion \eqref{eq:stopA_ATA} implies that
	\begin{align}
		\|x^{k+1}-{\cal P}_k(x^k)\|_{{\cal M}}\leq \sqrt{2\sigma_k\Big(f_k(x^{k+1})-\psi_k(u^{k+1})\Big)\Big)}\leq \sqrt{2\sigma_k \frac{\epsilon_k^2}{2\sigma_k}} =\epsilon_k.\label{eq:stopA_pre_ppa}
	\end{align}
	In addition, the stopping criterion \eqref{eq:stopB_ATA} implies that
	\begin{align}
	\|x^{k+1}-{\cal P}_k(x^k)\|_{{\cal M}} \leq \sqrt{2\sigma_k \frac{\delta_k^2}{2\sigma_k} \|x^{k+1}-x^k\|_{\cal M}^2}=\delta_k\|x^{k+1}-x^k\|_{{\cal M}}.\label{eq:stopB_pre_ppa}
	\end{align}
	Note that the conditions \eqref{eq:stopA_pre_ppa} and \eqref{eq:stopB_pre_ppa} are the general criteria for the approximate calculation in the preconditioned PPA. Then the conclusions of the theorem follow from the fact that $\Omega$ is nonempty together with \citep[Theorem 2.3 \& Theorem 2.5]{li2020asymptotically}.
\end{proof}

\subsection{Error bound conditions for the weighted exclusive lasso models}
As one can see in Theorem \ref{thm:convergence_ALM}, the desired asymptotic superlinear convergence rate of the proposed preconditioned PPA relies on the error bound condition \eqref{error_bound} of ${\cal T}_f$. In this subsection, we establish the error bound condition of ${\cal T}_f$ for the problem \eqref{eq: general_ml_model} with the exclusive lasso regularizer, that is  $p(x)=\Delta^{\mathcal{G},w}(x)=\sum_{j=1}^l \|w_{g_j} \circ x_{g_j}\|_1^2$ in \eqref{eq: general_ml_model}. We are going to prove that the error bound condition of ${\cal T}_f$ holds for the problem \eqref{eq: general_ml_model} with a piecewise linear-quadratic regularizer, which includes the exclusive lasso regularizer as a special case.

For the purpose of analyzing the error bound condition \eqref{error_bound}, we need the proximal residual function $R:\mathbb{R}^n\rightarrow \mathbb{R}^n$ associated with \eqref{eq: general_ml_model}, which is defined as
\begin{align*}
R(x):=x-{\rm Prox}_{\lambda p}(x-A^T \nabla h(Ax)+c),\quad \forall x\in \mathbb{R}^n.
\end{align*}
Indeed, by noting the fact that
\[
\partial f(x) = A^T \nabla h(Ax) -c + \partial (\lambda p)(x),
\]
we know that the first-order optimality condition of \eqref{eq: general_ml_model} is $0\in A^T \nabla h(Ax) -c + \partial (\lambda p)(x)$, which is equivalent to $x = {\rm Prox}_{\lambda p}(x-A^T \nabla h(Ax)+c)$. Therefore, we can see that $\bar{x}\in \Omega$ if and only if $R (\bar{x})=0$. In the following proposition, we prove that the error bound condition with proximal mapping based residual function holds for the problem \eqref{eq: general_ml_model} with a piecewise linear-quadratic regularizer.

\begin{proposition}\label{prop:luotseng}
	For the problem \eqref{eq: general_ml_model}, suppose that $h(\cdot)$ is strongly convex on any compact convex set in $\mathbb{R}^m$ and $p(\cdot)$ is piecewise linear-quadratic. Then  for any $\xi\geq \inf f$, there exist constants $\kappa,\varepsilon>0$ such that
	\begin{align*}
		{\rm dist}(x,\Omega)\leq \kappa \|R(x)\|\mbox{ \quad  for all $x\in \mathbb{R}^n$ with $f(x)\leq \xi$, $\|R(x)\|\leq \varepsilon$}.
	\end{align*}
\end{proposition}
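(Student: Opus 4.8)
The plan is to invoke the classical Luo–Tseng error bound theory for convex composite problems. The key observation is that our objective $f(x) = h(Ax) - \langle c, x\rangle + \lambda p(x)$ has exactly the structure required: a smooth part $x \mapsto h(Ax) - \langle c, x\rangle$ whose gradient is $A^T\nabla h(Ax) - c$, composed via the linear map $A$ with a function $h$ that is strongly convex on compact convex sets, plus a convex piecewise linear-quadratic term $\lambda p$. First I would recall the relevant result (Luo–Tseng, or its modern restatement, e.g.\ in work of Tseng or in \citep{zhou2017unified}): for a problem $\min_x\, g(x) + P(x)$ where $g(x) = \phi(Ex)$ with $\phi$ strongly convex on compact convex sets and $\nabla\phi$ Lipschitz on such sets, $E$ a linear map, and $P$ proper closed convex piecewise linear-quadratic, the proximal-gradient residual furnishes a local error bound on every sublevel set. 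With $E = A$, $\phi = h - \langle c, A^{-}\cdot\rangle$ appropriately (or more cleanly absorbing $-\langle c,x\rangle$ into the convex piece since it is linear hence trivially piecewise linear-quadratic, so that $P(x) = \lambda p(x) - \langle c, x\rangle$ stays piecewise linear-quadratic and $g(x) = h(Ax)$), the hypotheses are met.

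The main steps, in order, are: (1) verify that $p(x) = \Delta^{\mathcal{G},w}(x) = \sum_{j=1}^l \|w_{g_j}\circ x_{g_j}\|_1^2$ is indeed piecewise linear-quadratic — each $\|w_{g_j}\circ x_{g_j}\|_1$ is piecewise linear, so its square is piecewise quadratic, and on each orthant-type polyhedral piece the function is a genuine quadratic; the finite polyhedral decomposition of $\mathbb{R}^n$ (refining the sign-pattern decomposition within each group) makes this precise, and $p$ is convex by construction, so $\lambda p - \langle c,\cdot\rangle$ is convex piecewise linear-quadratic; (2) set up the smooth term $g(x) = h(Ax)$, note $\nabla g(x) = A^T\nabla h(Ax)$ is Lipschitz on compact convex sets because $h$ is twice continuously differentiable, and $g = h\circ A$ with $h$ strongly convex on compact convex sets; (3) quote the Luo–Tseng error bound result to conclude that on the sublevel set $\{x : f(x)\le\xi\}$ there exist $\kappa,\varepsilon>0$ with $\mathrm{dist}(x,\Omega)\le\kappa\|R(x)\|$ whenever $\|R(x)\|\le\varepsilon$, where $R$ is exactly our proximal residual $x - \mathrm{Prox}_{\lambda p}(x - A^T\nabla h(Ax) + c)$ (the stepsize being $1$); here one uses that $\Omega$ is nonempty and that $R(\bar x)=0 \iff \bar x\in\Omega$ as already noted. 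The compactness of $\Omega$ assumed throughout, together with compactness of sublevel sets (which follows since $\Omega$ compact and $f$ convex with nonempty bounded solution set), ensures we can restrict to a compact convex region on which the strong-convexity and Lipschitz hypotheses of $h$ bite.

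The part that requires the most care is step (1): confirming that the squared weighted $\ell_1$-norm within each group really does fit the formal definition of a piecewise linear-quadratic function, namely that $\mathbb{R}^n$ admits a finite decomposition into polyhedral sets on each of which $p$ agrees with a quadratic (possibly affine or constant) function. Within group $g_j$, fixing a sign pattern $s \in \{-1,0,+1\}^{|g_j|}$ restricts $x_{g_j}$ to a polyhedral cone on which $\|w_{g_j}\circ x_{g_j}\|_1 = \langle s\circ w_{g_j}, x_{g_j}\rangle$ is linear, so its square is quadratic; taking the common refinement of these decompositions across all $l$ groups gives a finite polyhedral decomposition of $\mathbb{R}^n$ on each cell of which $p$ is quadratic. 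One must also note the mild subtlety that standard statements of the error bound assume the loss is of the form $\phi(Ex)$ — here one either keeps $-\langle c, x\rangle$ inside the piecewise linear-quadratic term $P$, or observes it is an affine (hence smooth with constant gradient) perturbation that does not affect the argument. Modulo these bookkeeping points, the proposition is essentially a direct application of the known theory, and I do not anticipate a genuinely hard analytic obstacle.
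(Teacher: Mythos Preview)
Your overall route---reduce to the Luo--Tseng/Zhou--So error-bound framework via the structure $f(x)=h(Ax)+(\lambda p(x)-\langle c,x\rangle)$---is the same as the paper's, and the paper indeed finishes by invoking \citep[Corollary~1]{zhou2017unified}. Two points of emphasis are misplaced, however.

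First, your ``hardest step'' (1) is not part of this proposition at all: piecewise linear-quadraticity of $p$ is a \emph{hypothesis} here, not something to verify (the check that $\Delta^{\mathcal{G},w}$ is PLQ belongs to the application, not to this lemma). Second, and more substantively, the classical Luo--Tseng result is stated for \emph{polyhedral} regularizers, not general PLQ ones; you cannot simply quote it with ``$P$ piecewise linear-quadratic'' substituted in. The Zhou--So corollary you cite does cover this case, but its hypothesis is that the solution map $\Gamma(y,g)=\{x:Ax=y,\ -g\in\partial(\lambda p)(x)\}$ be locally upper Lipschitz. The paper's proof supplies exactly this missing bridge: since $p$ is PLQ, so is $p^*$ \citep[Theorem~11.14(b)]{rockafellar2009variational}, hence $\partial p$ is a polyhedral multifunction \citep[Proposition~10.21]{rockafellar2009variational}, hence $\Gamma$ is polyhedral, hence locally upper Lipschitz by Robinson's theorem \citep{robinson1981some}. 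That chain is the actual technical content; once you have it, \citep[Corollary~1]{zhou2017unified} gives the conclusion directly. Your proposal would go through once you insert this argument in place of the unnecessary PLQ verification.
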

\begin{proof}
	Since $p$ is piecewise linear-quadratic, $p^*$ is also piecewise linear-quadratic by \citep[Theorem 11.14(b)]{rockafellar2009variational}. Thus $\partial p$ and $\partial p^*$ are both polyhedral due to \citep[Proposition 10.21]{rockafellar2009variational}. Define the solution map $\Gamma:\mathbb{R}^m\times \mathbb{R}^n\rightarrow \mathbb{R}^n$ as $\Gamma(y,g):=\{x\in \mathbb{R}^n\mid Ax=y,-g\in \partial p(x)\}$. Note that $\Gamma$ is a polyhedral multifunction, thus it is locally upper Lipschitz continuous at any $(y,g)\in \mathbb{R}^m\times \mathbb{R}^n$ by \citep{robinson1981some}. Therefore the desired conclusion holds by \citep[Corollary 1]{zhou2017unified}.
\end{proof}

Based on Proposition \ref{prop:luotseng}, we then prove that the error bound condition \eqref{error_bound} holds for the linear regression problem and the logistic regression problem with a piecewise linear-quadratic regularizer.
\begin{proposition} \label{prop:error-bound}
	Assume that $p(\cdot)$ is piecewise linear-quadratic. Then the error bound condition \eqref{error_bound} holds if $h(\cdot)$ is strongly convex on any compact convex set in $\mathbb{R}^m$. In particular, the latter property is satisfied by the following two special cases:
	\begin{itemize}
		\item[(1)] (linear regression) $h(y)=\sum_{i=1}^m (y_i-b_i)^2/2$, for some given vector $b\in \mathbb{R}^m$;
		\item[(2)] (logistic regression) $h(y) = \sum_{i=1}^m \log(1 + \exp(-b_i y_i))$, for some given vector $b\in \{-1,1\}^m$.
	\end{itemize}
\end{proposition}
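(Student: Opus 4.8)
The plan is to reduce the error bound condition \eqref{error_bound}, stated in terms of $\mathrm{dist}(0,\mathcal{T}_f(x))$, to the proximal-residual error bound already established in Proposition~\ref{prop:luotseng}. The two ingredients are: (i) a local equivalence between $\|R(x)\|$ and $\mathrm{dist}(0,\partial f(x))$ on the relevant region, and (ii) a verification that the side conditions in Proposition~\ref{prop:luotseng} (namely $f(x)\le\xi$ and $\|R(x)\|\le\varepsilon$) are implied, for $x$ in the restricted set appearing in \eqref{error_bound}, once we note that that set is bounded (since $\Omega$ is compact and the radius $\sum_k\epsilon_k+\mathrm{dist}_{\mathcal M}(x^0,\Omega)$ is finite) and hence $f$ is bounded above there by continuity.

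First I would record that $\partial f(x)=A^T\nabla h(Ax)-c+\partial(\lambda p)(x)$, so that $v\in\partial f(x)$ iff $v-A^T\nabla h(Ax)+c\in\partial(\lambda p)(x)$, i.e. iff $x-v\in x-A^T\nabla h(Ax)+c-\partial(\lambda p)(x)$; comparing with the resolvent identity $R(x)=x-\mathrm{Prox}_{\lambda p}(x-A^T\nabla h(Ax)+c)$ and using that $\mathrm{Prox}_{\lambda p}$ is $1$-Lipschitz (nonexpansive), a standard manipulation yields a two-sided bound of the form $\|R(x)\|\le (1+\|A^T\nabla h(A\cdot)\|_{\mathrm{loc Lip}}+1)\,\mathrm{dist}(0,\partial f(x))$ and, in the other direction, $\mathrm{dist}(0,\partial f(x))\le (\text{const})\,\|R(x)\|$, valid for $x$ ranging over a bounded set on which $\nabla h\circ A$ is Lipschitz (here I use that $h$ is twice continuously differentiable, so $\nabla h$ is locally Lipschitz, and that the set is bounded). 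In fact only the direction $\mathrm{dist}(x,\Omega)\le\kappa'\mathrm{dist}(0,\partial f(x))$ is needed, so I only need $\|R(x)\|\le c_1\,\mathrm{dist}(0,\partial f(x))$.

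Then I would combine: on the bounded set $S:=\{x:\mathrm{dist}(x,\Omega)\le \sum_k\epsilon_k+\mathrm{dist}_{\mathcal M}(x^0,\Omega)\}$, the function $f$ attains a finite maximum $\xi$ (continuity of $f$ plus compactness of $S$), and if $x\in S$ satisfies $\mathrm{dist}(0,\mathcal T_f(x))$ small, then $\|R(x)\|\le c_1\,\mathrm{dist}(0,\mathcal T_f(x))$ is small too, so both hypotheses $f(x)\le\xi$ and $\|R(x)\|\le\varepsilon$ of Proposition~\ref{prop:luotseng} hold; that proposition then gives $\mathrm{dist}(x,\Omega)\le\kappa\|R(x)\|\le\kappa c_1\,\mathrm{dist}(0,\mathcal T_f(x))$. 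For $x\in S$ with $\mathrm{dist}(0,\mathcal T_f(x))$ not small, i.e. bounded away from $0$, the bound holds trivially by enlarging $\kappa$ since $\mathrm{dist}(x,\Omega)$ is itself bounded on $S$. Taking the larger of the two constants gives \eqref{error_bound}. Finally, the two special cases are checked directly: for linear regression $h(y)=\tfrac12\|y-b\|^2$ is globally $1$-strongly convex; for logistic regression $h''(y_i)=\exp(-b_iy_i)/(1+\exp(-b_iy_i))^2>0$ is continuous and strictly positive, hence bounded below by a positive constant on any compact set, giving strong convexity there (using $b_i^2=1$). Also, both are clearly convex and twice continuously differentiable, and the exclusive lasso regularizer $\Delta^{\mathcal G,w}$ is piecewise linear-quadratic (it is a finite sum of squares of polyhedral gauges, hence piecewise linear-quadratic), so Proposition~\ref{prop:luotseng} applies.

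The main obstacle is the bookkeeping in step~(i): converting the residual-based error bound into the subdifferential-based one requires being careful that the Lipschitz modulus of $\nabla h\circ A$ used in the comparison is taken over a set large enough to contain both $x$ and the prox point $\mathrm{Prox}_{\lambda p}(x-A^T\nabla h(Ax)+c)$, and that this set is still bounded; boundedness is what makes everything go through, and it relies precisely on the restriction built into \eqref{error_bound} together with compactness of $\Omega$. Everything else is routine.
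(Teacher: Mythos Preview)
Your overall strategy matches the paper's: use compactness of $\Omega$ to make the restricted set $S$ bounded, pick $\xi=\sup_S f$, invoke Proposition~\ref{prop:luotseng}, and then split into the two cases (residual small vs.\ residual large) to remove the side condition $\|R(x)\|\le\varepsilon$. The case split in the paper is on $\|R(x)\|$ rather than on $\mathrm{dist}(0,\mathcal T_f(x))$, but this is cosmetic.

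The one place where you make your life harder than necessary is the comparison $\|R(x)\|\le c_1\,\mathrm{dist}(0,\partial f(x))$. You propose to obtain this with a constant depending on the local Lipschitz modulus of $A^T\nabla h(A\cdot)$, and then worry in your final paragraph about making the bounded set large enough to control that modulus. None of this is needed: the inequality holds \emph{globally with constant $1$}, and the argument is a one-liner. If $v\in\partial f(x)$ then $v-A^T\nabla h(Ax)+c\in\partial(\lambda p)(x)$, which is equivalent to
\[
x=\mathrm{Prox}_{\lambda p}\bigl(x+v-A^T\nabla h(Ax)+c\bigr).
\]
Hence
\[
\|R(x)\|=\bigl\|\mathrm{Prox}_{\lambda p}\bigl(x+v-A^T\nabla h(Ax)+c\bigr)-\mathrm{Prox}_{\lambda p}\bigl(x-A^T\nabla h(Ax)+c\bigr)\bigr\|\le\|v\|,
\]
by nonexpansiveness of $\mathrm{Prox}_{\lambda p}$; taking the infimum over $v\in\partial f(x)$ gives $\|R(x)\|\le\mathrm{dist}(0,\mathcal T_f(x))$. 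This is exactly what the paper does, and it eliminates your ``main obstacle'' entirely: no Lipschitz constant of $\nabla h$ enters, and no enlargement of the bounded set is required.
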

\begin{proof}
	Let $r>0$ be given. Due to the compactness of $\Omega$, the set $\{x\in \mathbb{R}^n\mid {\rm dist}(x,\Omega)\leq r\}$ is also compact and thus $\xi:=\max_{\{x: {\rm dist}(x,\Omega)\leq r\}}f(x)$ is finite. Due to Proposition \ref{prop:luotseng}, for this $\xi$, there exist constants $\kappa,\varepsilon>0$ such that
	\begin{align}
		{\rm dist}(x,\Omega)\leq \kappa \|R(x)\|\mbox{ \quad  for all $x\in\mathbb{R}^n$ with $f(x)\leq \xi$, $\|R(x)\|\leq \varepsilon$}.\label{eq:luocondition}
	\end{align}
	For any $x$ such that ${\rm dist}(x,\Omega)\leq r$, if $\|R(x)\|\leq \varepsilon$, from \eqref{eq:luocondition}, we have ${\rm dist}(x,\Omega)\leq \kappa \|R(x)\|$; if $\|R(x)\|> \varepsilon$, we have ${\rm dist}(x,\Omega)\leq r= (r/\varepsilon)\varepsilon < (r/\varepsilon) \|R(x)\|$. Therefore, it holds that
	\[
	{\rm dist}(x,\Omega)\leq \max\{\kappa,(r/\varepsilon)\} \|R(x)\|,\quad \forall x\in \mathbb{R}^n \mbox{ satisfying } {\rm dist}(x,\Omega)\leq r.
	\]
	Next, consider an arbitrary $x\in \mathbb{R}^n$ such that ${\rm dist}(x,\Omega)\leq r$. For any $y\in {\cal T}_f(x)$, we have that $x={\rm Prox}_{\lambda p}(x+y-A^T \nabla h(Ax)+c)$, and
	\begin{align*}
		\|R(x)\|=\|{\rm Prox}_{\lambda p}(x+y-A^T \nabla h(Ax)+c)-{\rm Prox}_{\lambda p}(x-A^T \nabla h(A x)+c)\|\leq \|y\|.
	\end{align*}
	Therefore, we have ${\rm dist}(x,\Omega)\leq \max\{\kappa,(r/\varepsilon)\}
	\|y\|$ for any $y\in {\cal T}_f(x)$. This implies that
	\begin{align*}
		{\rm dist}(x,\Omega)\leq \max\{\kappa,(r/\varepsilon)\}
		{\rm dist}(0,{\cal T}_f(x)).
	\end{align*}
	Since $x$ is arbitrarily chosen, the above inequality implies that the error bound condition \eqref{error_bound} holds.
\end{proof}

From Proposition \ref{prop:error-bound} and Theorem \ref{thm:convergence_ALM}, the preconditioned PPA, with large parameters $\{\sigma_k\}_{k=0}^{\infty}$, for solving the linear regression and logistic regression problems with the exclusive lasso regularizer is guaranteed to have fast linear convergence rate. When solving the weighted exclusive lasso models with the preconditioned PPA presented in Algorithm \ref{alg:ppdna}, it is clear that we need an efficient way to compute the proximal mapping of the weighted exclusive lasso regularizer.

\section{The proximal mapping of the weighted exclusive lasso regularizer and its generalized Jacobian}
\label{sec:proxJacobian}
We give a systematic study of the weighted exclusive lasso regularizer. Specifically, we derive an $O(n\log n)$ procedure to compute the proximal mapping ${\rm Prox}_{p}(\cdot)$ with $p(\cdot)=\Delta^{\mathcal{G},w}(\cdot)$, and characterize the corresponding generalized Jacobian. By the definition of $\Delta^{\mathcal{G},w}(\cdot)$, it is important for us to study ${\rm Prox}_{\rho\|w\circ\cdot\|_1^2}(a)$ for any $a\in \mathbb{R}^t$, where $w\in \mathbb{R}^t_{++}$ is a given weight vector and $\rho>0$ is a given scalar.

\subsection{An $O(t\log t)$ procedure to compute ${\rm Prox}_{\rho\|w\circ\cdot\|_1^2}(\cdot)$}
The following proposition will be useful in the subsequent analysis.
\begin{proposition}\label{prop: sign}
	Given $a\in \mathbb{R}^t$. For each $i\in \{1,2,\cdots,t\}$, we have $\big({\rm Prox}_{\rho\|w\circ\cdot\|_1^2}(a)\big)_i = 0$ if $a_i = 0$; $\big({\rm Prox}_{\rho\|w\circ\cdot\|_1^2}(a)\big)_i\geq 0$ if $a_i>0$; and $\big({\rm Prox}_{\rho\|w\circ\cdot\|_1^2}(a)\big)_i\leq 0$ if $a_i<0$.
\end{proposition}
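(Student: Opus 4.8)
The plan is to exploit the variational characterization of the proximal mapping together with a sign-flipping argument. Write $x^* := {\rm Prox}_{\rho\|w\circ\cdot\|_1^2}(a)$, so that $x^*$ is the unique minimizer of $\phi(x) := \rho\|w\circ x\|_1^2 + \tfrac12\|x-a\|^2$ over $\mathbb{R}^t$. The objective $\phi$ separates into the coupling term $\rho\|w\circ x\|_1^2$, which depends only on $(|x_1|,\dots,|x_t|)$, and the sum $\tfrac12\sum_i (x_i-a_i)^2$. The key observation is that replacing any coordinate $x_i$ by a value with the same absolute value leaves the first term unchanged, so the comparison reduces to the second term coordinatewise.

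First I would prove the sign-agreement claims. Suppose $a_i>0$ but $x_i^*<0$; consider $\tilde x$ obtained from $x^*$ by flipping the sign of the $i$-th coordinate, i.e. $\tilde x_i = -x_i^* > 0$ and $\tilde x_k = x_k^*$ for $k\neq i$. Then $\|w\circ\tilde x\|_1 = \|w\circ x^*\|_1$, while $(\tilde x_i - a_i)^2 = (-x_i^* - a_i)^2 < (x_i^* - a_i)^2$ because $x_i^*$ and $a_i$ have opposite signs (expand: the cross term $-2x_i^* a_i$ is negative for $\tilde x$ is wrong — rather, with $a_i>0$, $(|x_i^*| - a_i)^2 \le (-|x_i^*| - a_i)^2$ with strict inequality when $x_i^* \neq 0$). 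Hence $\phi(\tilde x) < \phi(x^*)$, contradicting optimality; therefore $x_i^* \ge 0$. The case $a_i<0$ is symmetric (flip to make $x_i^*\le 0$), and together these give the second and third assertions.

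For the first assertion, suppose $a_i = 0$ but $x_i^* \neq 0$. Let $\tilde x$ agree with $x^*$ except $\tilde x_i = 0$. Again $\|w\circ\tilde x\|_1 \le \|w\circ x^*\|_1$ (in fact the sum of absolute values strictly decreases, so the squared $\ell_1$ term does not increase), while $(\tilde x_i - a_i)^2 = 0 < (x_i^*)^2 = (x_i^* - a_i)^2$. Thus $\phi(\tilde x) < \phi(x^*)$, again contradicting optimality, so $x_i^* = 0$.

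I do not anticipate a genuine obstacle here; the only point requiring care is that the $\ell_1$-squared term is a monotone nondecreasing function of each $|x_i|$ — this follows since $\|w\circ x\|_1 = \sum_k w_k|x_k|$ with $w \in \mathbb{R}^t_{++}$, so decreasing $|x_i|$ decreases (or keeps fixed) $\|w\circ x\|_1 \ge 0$ and hence its square. Uniqueness of the minimizer (from strong convexity of $\phi$) makes every strict-inequality comparison a genuine contradiction, so all three conclusions follow.
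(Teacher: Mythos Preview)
Your proof is correct and follows essentially the same sign-flipping comparison argument as the paper for the cases $a_i>0$ and $a_i<0$. For the case $a_i=0$ you zero out the $i$-th coordinate to obtain a strict decrease in $\phi$, whereas the paper instead flips the sign of that coordinate (leaving $\phi$ unchanged) and invokes uniqueness of the minimizer; both routes are valid, and yours is marginally more direct.
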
	
\begin{proof}
	For notational simplicity, we denote
	\begin{align*}
		z^* = {\rm Prox}_{\rho\|w\circ\cdot\|_1^2}(a) = \underset{z\in \mathbb{R}^t}{\arg\min} \Big\{
		\eta(z):=\frac{1}{2}\|z-a\|^2 + \rho \|w\circ z\|_1^2
		\Big\}.
	\end{align*}
	First, consider the case when $a_i= 0$. We prove by contradiction. Suppose $z^*_i\neq 0$, then we define a new vector $\hat{z}\in \mathbb{R}^t$ as $\hat{z}_i = -z^*_i$ and $\hat{z}_j = z^*_j$ for $j\neq i$. By definition, we have $\hat{z} \neq z^*$ and $\rho \|w\circ \hat{z}\| = \rho \|w\circ z^*\|$. Moreover, we can see that
	\begin{align*}
		\eta(\hat{z})-\eta(z^*) = \frac{1}{2}\|\hat{z}-a\|^2 + \rho \|w\circ \hat{z}\|_1^2 - \frac{1}{2}\|z^*-a\|^2 - \rho \|w\circ z^*\|_1^2  = -2\hat{z}_i a_i+ 2z^*_i a_i = 4z^*_i a_i = 0,
	\end{align*}
	which contradicts the fact that $z^*$ is the unique minimizer of $\eta(\cdot)$. This implies that $z^*_i= 0$. Next we consider the case when $a_i> 0$. Again, we prove by contradiction. Assume $z^*_i < 0$, then we define $\hat{z}\in \mathbb{R}^t$ as $\hat{z}_i = -z^*_i$ and $\hat{z}_j = z^*_j$ for $j\neq i$. It can be see that $\eta(\hat{z})-\eta(z^*) =4z^*_i a_i <0$, which contradicts the fact that $z^*$ minimizes the function $\eta(\cdot)$ and further means that $z^*_i\geq 0$. The case when $a_i< 0$ can be proved in the same manner.
\end{proof}

The following proposition indicates that we only need to focus on computing ${\rm Prox}_{\rho\|w\circ\cdot\|_1^2}(|a|)$ for any $a \in \mathbb{R}^t$.

\begin{proposition}\label{prop:proxmapping}
	For given $\rho > 0$ and $a \in \mathbb{R}^{t}$, we have
	\[
	{\rm Prox}_{\rho\|w\circ\cdot\|_1^2}(a)={\rm sign}(a)\circ {\rm Prox}_{\rho\|w\circ\cdot\|_1^2}(|a|)={\rm sign}(a)\circ x(|a|),
	\]
	where $x(\cdot): \mathbb{R}^t_+ \to \mathbb{R}^t_+$ is defined as:
\begin{align}
	x(d) &:=\underset{x \in \mathbb{R}^n_{+}}{\arg\min}\ \Big\{\frac{1}{2}\|x - d\|^2 + \rho\|w\circ x\|_1^2
	\Big\}=\underset{x \in \mathbb{R}^n_{+}}{\arg\min} \ \Big\{\frac{1}{2}\|x - d\|^2 +  \rho x^T(ww^T)x \Big\}.\label{eq:pos_problem}
\end{align}
\end{proposition}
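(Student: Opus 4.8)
The plan is to establish the two claimed identities separately. The first identity, ${\rm Prox}_{\rho\|w\circ\cdot\|_1^2}(a) = {\rm sign}(a)\circ {\rm Prox}_{\rho\|w\circ\cdot\|_1^2}(|a|)$, follows from a sign-flipping symmetry argument analogous to the one in the proof of Proposition \ref{prop: sign}. First I would observe that the objective $\eta_d(z) := \frac{1}{2}\|z - d\|^2 + \rho\|w\circ z\|_1^2$ depends on $d$ only through the cross term $-\langle z, d\rangle$, since $\|z-d\|^2 = \|z\|^2 - 2\langle z,d\rangle + \|d\|^2$ and both $\|z\|^2$ and $\|w\circ z\|_1^2 = \|w\circ|z|\|_1^2$ are invariant under coordinatewise sign changes of $z$. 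Let $S = {\rm Diag}({\rm sign}(a))$ with the convention that zero entries of $a$ map to $1$ (or, cleaner: restrict attention first to the indices where $a_i\neq 0$, handling $a_i = 0$ via Proposition \ref{prop: sign}, which gives $({\rm Prox}(a))_i = ({\rm Prox}(|a|))_i = 0$ there). For the nonzero indices, the change of variables $z = S\tilde z$ is a bijection on $\mathbb{R}^t$ with $\|z-a\|^2 = \|\tilde z - |a|\|^2$ and $\|w\circ z\|_1^2 = \|w\circ\tilde z\|_1^2$, hence $\eta_a(S\tilde z) = \eta_{|a|}(\tilde z)$. Since the proximal mapping is the unique minimizer, this yields ${\rm Prox}_{\rho\|w\circ\cdot\|_1^2}(a) = S\,{\rm Prox}_{\rho\|w\circ\cdot\|_1^2}(|a|) = {\rm sign}(a)\circ{\rm Prox}_{\rho\|w\circ\cdot\|_1^2}(|a|)$.

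For the second identity, ${\rm Prox}_{\rho\|w\circ\cdot\|_1^2}(|a|) = x(|a|)$, I must show that the unconstrained minimizer of $\eta_{|a|}(z)$ over $z\in\mathbb{R}^t$ already lies in $\mathbb{R}^t_+$, so that it coincides with the minimizer over the constrained set $\mathbb{R}^t_+$. But this is immediate from Proposition \ref{prop: sign}: applying it with $a$ replaced by $|a|\geq 0$, every coordinate $d_i = |a_i|$ is either zero (forcing the corresponding coordinate of ${\rm Prox}(|a|)$ to be zero) or strictly positive (forcing the corresponding coordinate to be nonnegative), so ${\rm Prox}_{\rho\|w\circ\cdot\|_1^2}(|a|)\in\mathbb{R}^t_+$. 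Since $\mathbb{R}^t_+$ is a convex set containing this unconstrained minimizer, it is a fortiori the minimizer of the same strictly convex objective restricted to $\mathbb{R}^t_+$, i.e.\ equals $x(|a|)$. The final equality in the definition of $x(d)$, namely $\|w\circ x\|_1^2 = x^T(ww^T)x$, holds because for $x\in\mathbb{R}^t_+$ we have $\|w\circ x\|_1 = \sum_i w_i x_i = w^T x$, and squaring gives $(w^Tx)^2 = x^T w w^T x$; this requires nonnegativity of $x$ and is exactly why the reformulation is stated on $\mathbb{R}^t_+$.

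I anticipate no serious obstacle here; the only point demanding minor care is the bookkeeping at coordinates where $a_i = 0$, where ${\rm sign}(a_i) = 0$ and the Hadamard product ${\rm sign}(a)\circ(\cdot)$ kills that coordinate — one must check this is consistent with Proposition \ref{prop: sign}, which indeed guarantees $({\rm Prox}_{\rho\|w\circ\cdot\|_1^2}(a))_i = 0$ there, so both sides agree. The rest is a routine substitution argument leveraging the sign-invariance of $\|\cdot\|$ and $\|w\circ\cdot\|_1$ together with the uniqueness of the proximal point.
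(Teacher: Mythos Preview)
Your proposal is correct and follows essentially the same approach as the paper: a coordinatewise sign-change substitution (the paper uses the vector $s_a$ with $(s_a)_i=1$ when $a_i\geq 0$ and $-1$ otherwise, which is your $S$ with the ``zero entries map to $1$'' convention) to reduce to the case $a=|a|$, followed by an appeal to Proposition~\ref{prop: sign} to conclude that ${\rm Prox}_{\rho\|w\circ\cdot\|_1^2}(|a|)\in\mathbb{R}^t_+$ and that the zero coordinates match. The only cosmetic difference is that the paper fixes the sign convention at zero up front rather than splitting into cases, which slightly streamlines the bookkeeping you flagged.
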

\begin{proof}
Let $s_a\in\mathbb{R}^t $ be defined as $(s_a)_i =1$ if $a_i\geq 0$, and $(s_a)_i = -1$ if $a_i<0$.
Note that
\begin{align*}
	\frac{1}{2}\|x-a\|^2 + \rho \|w\circ x\|_1^2 &= \frac{1}{2}\|s_a\circ x-|a|\|^2 + \rho \|w\circ x\|_1^2 = \frac{1}{2}\|s_a\circ x-|a|\|^2 + \rho \|w\circ s_a\circ x\|_1^2,
\end{align*}
where the first equality holds as $|a|= s_a\circ a$ and the second equality follows from the fact that $\|w\circ \cdot\|_1^2$ is invariant to sign changes. Therefore, we have that
\begin{align*}
	{\rm Prox}_{\rho\|w\circ\cdot\|_1^2}(a) = s_a \circ {\rm Prox}_{\rho\|w\circ\cdot\|_1^2}(|a|).
\end{align*}
It follows from Proposition \ref{prop: sign} that ${\rm Prox}_{\rho\|w\circ\cdot\|_1^2}(|a|) = x(|a|)$ and $x(|a|)_i = 0$ if $a_i = 0$. Thus, the conclusion holds.
\end{proof}
The next proposition provides an explicit formula for computing $x(\cdot)$ for any $d\in \mathbb{R}^t_{+}$. Since $x(d)_i = 0$ if $d_i = 0$, it is sufficient to consider that $d \in \mathbb{R}^t_{++}$.
\begin{proposition}\label{prop:pos_prox}
	Given $\rho > 0$ and $d \in \mathbb{R}_{++}^t$. Let $d^{w}\in \mathbb{R}^t$ be defined as $d^{w}_i :=d_i/w_i$, for $i=1,\cdots,{t}$. There exists a permutation matrix $\Pi$ such that $\Pi d^{w}$ is sorted in a non-increasing order. Denote $\tilde{d}=\Pi d $,  $\tilde{w}=\Pi w $, and
	\begin{align*}
	s_{i} = \sum_{j=1}^{i}
	\tilde{w}_j \tilde{d}_j, \quad
	L_{i} = \sum_{j=1}^{i}\tilde{w}_j^2,\quad
	\alpha_i = \frac{  s_{i}}{1 + 2\rho L_{i}}, \quad i = 1, 2, \dots, t.
	\end{align*}
	Let $\bar{\alpha}=\max_{1\leq i \leq t}\alpha_i$. Then, $x(d)$ in \eqref{eq:pos_problem} can be computed analytically as $x(d) = (d -2\rho \bar{\alpha} w)^+$.
\end{proposition}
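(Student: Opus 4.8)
The plan is to reduce \eqref{eq:pos_problem} to a one–dimensional root–finding problem, and then to recognize the numbers $\alpha_i$ of the statement as the only possible values of that root.

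First I would exploit that on $\mathbb{R}^t_+$ with $w\in\mathbb{R}^t_{++}$ one has $\|w\circ x\|_1 = w^Tx$, so \eqref{eq:pos_problem} reads $\min_{x\ge 0}\{\frac12\|x-d\|^2+\rho(w^Tx)^2\}$, a strongly convex problem whose unique solution I call $x(d)$. The optimality condition $0\le x\perp\nabla g(x)\ge 0$ with $g(x)=\frac12\|x-d\|^2+\rho(w^Tx)^2$, $\nabla g(x)=x-d+2\rho(w^Tx)w$, together with the abbreviation $\beta:=w^Tx(d)\ge 0$, forces the componentwise formula $x(d)_i=(d_i-2\rho\beta w_i)^+=\big(w_i(d^w_i-2\rho\beta)\big)^+$: if $d_i-2\rho\beta w_i>0$ then $x(d)_i>0$ and stationarity gives $x(d)_i=d_i-2\rho\beta w_i$, while if $d_i-2\rho\beta w_i\le 0$ then $x(d)_i=0$. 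Taking the inner product with $w$ shows $\beta$ solves the scalar equation $\beta=\sum_{i=1}^t w_i(d_i-2\rho\beta w_i)^+$; the right–hand side is continuous, nonincreasing in $\beta$, equals $w^Td>0$ at $\beta=0$, so $\beta\mapsto\beta-(\text{RHS})$ is strictly increasing from a negative value to $+\infty$ and the equation has a unique root, which is our $\beta$. Moreover $\beta>0$: if $\beta=0$ then $x(d)=d\in\mathbb{R}^t_{++}$ and $w^Tx(d)=w^Td>0$, a contradiction. It therefore remains to prove $\bar\alpha=\beta$, since then $x(d)=(d-2\rho\beta w)^+=(d-2\rho\bar\alpha w)^+$.

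Next I would pass to the sorted coordinates $\tilde d=\Pi d$, $\tilde w=\Pi w$, in which $\tilde d^w$ is nonincreasing, and set $k^\ast:=\max\{i:\tilde d^w_i\ge 2\rho\beta\}$; this set is nonempty because $\beta>0$ prevents all coordinates of $x(d)$ from vanishing, which forces $\tilde d^w_1>2\rho\beta$. Since $\tilde d^w$ is nonincreasing, $(\Pi x(d))_i=\big(\tilde w_i(\tilde d^w_i-2\rho\beta)\big)^+$ equals $\tilde w_i(\tilde d^w_i-2\rho\beta)\ge 0$ for $i\le k^\ast$ and $0$ for $i>k^\ast$; using orthogonality of $\Pi$, $\beta=w^Tx(d)=\sum_i\tilde w_i(\Pi x(d))_i=\sum_{i=1}^{k^\ast}\tilde w_i^2(\tilde d^w_i-2\rho\beta)=s_{k^\ast}-2\rho\beta L_{k^\ast}$, i.e.\ $\beta=\alpha_{k^\ast}$. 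Finally, for an arbitrary index $i$ I would write $s_i-2\rho\beta L_i=\sum_{j=1}^i\tilde w_j^2(\tilde d^w_j-2\rho\beta)$ and compare this partial sum with the full sum $\sum_{j=1}^{k^\ast}\tilde w_j^2(\tilde d^w_j-2\rho\beta)=\beta$: for $i\le k^\ast$ the omitted terms $j=i+1,\dots,k^\ast$ are nonnegative, so $s_i-2\rho\beta L_i\le\beta$; for $i>k^\ast$ the extra terms $j=k^\ast+1,\dots,i$ are strictly negative, so $s_i-2\rho\beta L_i<\beta$. Dividing by $1+2\rho L_i>0$ gives $\alpha_i\le\beta$ for every $i$, with equality at $i=k^\ast$, hence $\bar\alpha=\max_{1\le i\le t}\alpha_i=\beta$ and $x(d)=(d-2\rho\bar\alpha w)^+$.

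I expect the main obstacle to be the identification of the threshold index $k^\ast$ together with the partial–sum monotonicity in the last step: this is precisely where the hypothesis that $d^w$ is sorted in \emph{nonincreasing} order enters, since it is what makes the terms $\tilde w_j^2(\tilde d^w_j-2\rho\beta)$ nonnegative for $j\le k^\ast$ and negative thereafter. A secondary technical point is keeping the KKT–to–fixed–point reduction rigorous, in particular establishing $\beta>0$ so that $k^\ast$ is well defined and the support of $x(d)$ is nonempty, which is also consistent with the observation in the introduction that every group has representatives.
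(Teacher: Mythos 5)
Your proposal is correct and follows essentially the same route as the paper's proof: reduce the KKT system to the scalar fixed-point equation $\beta=\sum_i w_i(d_i-2\rho\beta w_i)^+$ for $\beta=w^Tx(d)$, identify the threshold index in the sorted order so that $\beta=\alpha_{k^*}$, and then verify $\alpha_i\le\alpha_{k^*}$ for all $i$. The only (cosmetic) difference is that you obtain the maximality of $\alpha_{k^*}$ by comparing partial sums $s_i-2\rho\beta L_i$ against $\beta$ and dividing, whereas the paper computes $\alpha_{k}-\alpha_i$ directly; both reduce to the same sign argument on the terms $\tilde w_j^2(\tilde d_j/\tilde w_j-2\rho\beta)$.
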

\begin{proof}
	The KKT conditions for \eqref{eq:pos_problem} are given by
	\begin{equation}\label{eq: pos_KKT}
	x - d + 2\rho w w^Tx + \mu = 0,\;
	\mu \circ x =0,\; \mu \leq 0, \;x \geq 0,
	\end{equation}
	where $\mu \in \mathbb{R}^{t}$ is the corresponding dual multiplier. If $(x^*,\mu^{*})$ satisfies the KKT conditions \eqref{eq: pos_KKT},  by denoting $\beta=w^T x^*$, we can see that
	\begin{equation*}
	x^* +  \mu^* = d - 2\rho\beta w ,\;
	\mu^* \circ x^* =0,\; \mu^* \leq 0, \;x^* \geq 0.
	\end{equation*}
	Therefore, $(x^*,\mu^{*})$ have the representations:
	\[
	x^*=(d - 2\rho\beta w)^+,\quad \mu^*=(d - 2\rho\beta w)^-.
	\]
	Then our aim is to find the value of $\beta$. By the definition of $\beta$, we can see that
	\begin{align*}
	\beta = \sum_{i=1}^{t} w_i x^*_i=\sum_{i=1}^{t} w_i (d_i - 2\rho\beta w_i)^+=\sum_{i=1}^{t} w^2_i ((d^{w})_i - 2\rho\beta )^+=\sum_{i=1}^{t} \tilde{w}^2_i ((\Pi d^{w})_i - 2\rho\beta )^+.
	\end{align*}
	Note that there must exist some index $j$ such that $(\Pi  d^{w})_j>2\rho \beta$, otherwise, we have $\beta=0$ and $\Pi d^{w}\leq 0$ (equivalent to $d\leq 0$), which contradicts the assumption that $0\not=d \geq 0$. Since $\Pi d^{w}$ is sorted in a non-increasing order, there exists an index $k$ such that $\tilde{d}_{1}/\tilde{w}_{1} \geq \dots \geq
	\tilde{d}_{k}/\tilde{w}_{k} > 2\rho\beta \geq \tilde{d}_{k+1}/\tilde{w}_{k+1} \geq \dots \geq \tilde{d}_{t}/\tilde{w}_{t}$. Therefore,
	\[
	\beta = \sum_{i=1}^{k} \tilde{w}^2_i ((\Pi d^{w})_i - 2\rho\beta )=\sum_{i=1}^{k}  \tilde{w}_i\tilde{d}_i- 2\rho\beta \sum_{i=1}^{k}  \tilde{w}^2_i=s_{k} -2\rho \beta L_{k} ,
	\]
	which means that
	\[
	\beta = \frac{s_{k} }{1+2\rho L_{k} }=\alpha_k.
	\]
	Next we show that $\beta=\bar{\alpha}$, which means $\alpha_k\geq \alpha_i$ for all $i$. For $i<k$,
	\begin{align*}
	&\ \alpha_k - \alpha_{i} =  \frac{(1 + 2\rho L_{i}) s_{k} - (1+2\rho L_{k}) s_{i}}{(1+2\rho L_{k})(1+2\rho L_{i})}=  \frac{(1 + 2\rho L_{k})(s_k-s_i) -2\rho s_k\sum_{j=i+1}^k\tilde{w}_j^2}{(1+2\rho L_{k})(1+2\rho L_{i})}\\
	&=  \frac{(1 + 2\rho L_{k})\sum_{j=i+1}^k\tilde{w}_j\tilde{d}_j -2\rho (1 + 2\rho L_{k})\beta\sum_{j=i+1}^k\tilde{w}_j^2}{(1+2\rho L_{k})(1+2\rho L_{i})}=  \frac{\sum_{j=i+1}^k \tilde{w}_j^2 (\tilde{d}_j/\tilde{w}_j -2\rho \beta)}{1+2\rho L_{i}}\geq 0.
	\end{align*}
	We can prove that $\alpha_k\geq \alpha_i$ for all $i>k$ in a similar way. Therefore, we have that $\beta=\alpha_k=\max_{1\leq i \leq {t}}\alpha_i=\bar{\alpha}$. Finally, since the solution to \eqref{eq:pos_problem} is unique, we have
	\begin{equation*}
	x(a)=x^*=(d - 2\rho\beta w)^+=(d - 2\rho\bar{\alpha} w)^+.\qedhere
	\end{equation*}
\end{proof}

As a side note, the computational cost of calculating $x(d)$ given any $d\in \mathbb{R}^t_+$ is $O(t\log t)$, where the most time-consuming step is to sort a $t$-dimensional vector. Thus, the computational cost of calculating ${\rm Prox}_{\rho\|w\circ\cdot\|_1^2}(a)$ is $O(t\log t)$ due to Proposition \ref{prop:proxmapping}.

\begin{remark}
	The proximal mapping of $\rho\|w\circ\cdot\|_1^2$ is mentioned in \citep[Proposition 4]{kowalski2009sparse}, but we find that the derivation contains some errors. We use a simple example to demonstrate the gap. Consider the proximal mapping of $\|\cdot\|_1^2$ at the point $[1,0.5]^T$, where we need to solve
	\begin{align}
	\label{eq: counterexample}
	\min_{x_{1},x_{2}\in\mathbb{R}}\Big\{\phi(x_1,x_2):=\frac{1}{2}(x_{1}-1)^2+\frac{1}{2}(x_{2}-0.5)^2+(|x_{1}|+|x_{2}|)^2\Big\}.
	\end{align}
	Equation (21) in \citep{kowalski2009sparse} said that
	\begin{align*}
	|x_{1}| = 1-2(|x_{1}|+|x_{2}|),\quad
	|x_{2}| = 0.5-2(|x_{1}|+|x_{2}|),
	\end{align*}
	that is $|x_{1}|=2/5$, $|x_{2}|=-1/10$, which contradicts the fact that $|x_{2}|$ should be nonnegative.
	
	Later in 2017, the authors of \citep{yoon2017combined} also proposed a formula for the proximal mapping of $\|\cdot\|_1^2$, which is not correct. The problem \eqref{eq: counterexample} can also serve as a counterexample. The solution obtained by the formula in \citep[Equation (8)]{yoon2017combined} is $x_{1} = x_{2} = 0$, which is not optimal to \eqref{eq: counterexample} since 
	\[
	\phi(0,0) = 5/8 > \phi(1/3,0)=11/24.
	\]
\end{remark}

\subsection{The generalized Jacobian of ${\rm Prox}_{\rho\|w\circ\cdot\|_1^2}(\cdot)$}
Note that in order to design a second-order type algorithm for solving the dual of the PPA subproblem, it is critical for us to derive an explicit element in the generalized Jacobian of ${\rm Prox}_{\rho\|w\circ\cdot\|_1^2}(\cdot)$. According to Proposition \ref{prop:proxmapping}, we know that in order to obtain the generalized Jacobian of ${\rm Prox}_{\rho\|w\circ\cdot\|_1^2}(\cdot)$, we need to study the generalized Jacobian of $x(\cdot)$ first. For a better illustration, we consider the quadratic programming (QP) reformulation of $x(\cdot)$. For any $a\in \mathbb{R}^{t}$, if we denote $ Q := I_{t} +  2\rho ww^T \in \mathbb{S}^{t}$, then \eqref{eq:pos_problem} can be equivalently written as
\begin{align}
x(a)=\underset{x \in \mathbb{R}^{t}}{\arg\min} \ \Big\{ \frac{1}{2} \langle x, Qx \rangle - \langle x, a \rangle \mid x\geq 0 \Big\}.\label{eq: QP}
\end{align}

We first consider the case when $a\in \mathbb{R}^{t}$ satisfies $Q^{-1}a\geq 0$. In this case, we can derive that
\begin{align*}
x(a) = Q^{-1}a = \Big( I-\frac{2\rho}{1+2\rho w^T w}ww^T \Big)a.
\end{align*}
Therefore, $x(a)$ is differentiable on $\{a\in \mathbb{R}^{t}\mid Q^{-1}a>0\}$. Next we consider the case when there exists $i\in \{1,\cdots,{t}\}$ such that $(Q^{-1}a)_i<0$. Here, we derive the HS-Jacobian \citep{li2018efficiently} of $x(\cdot)$ based on the strongly convex QP \eqref{eq: QP} by applying the general results established in \citep{han1997newton,li2018efficiently}. As one can see from the KKT system \eqref{eq: pos_KKT} and the uniqueness of $x(a)$, the dual multiplier $\mu$ is also unique, which we denote as $\mu(a)$. Denote the active set of $x(a)$ as
\begin{align}
I(a): = {\{i \in \{ 1,\ldots,{t} \} \mid (x(a))_i = 0\}}.
\label{eq:Ia}
\end{align}
Since now we consider the case when there exists $i$ such that $(Q^{-1}a)_i<0$, we know that $\mu(a)\neq 0$, which implies that $I(a)\neq \emptyset$. Define a collection of index sets:
\[
{\cal K}(a): =  \{\;  K\subseteq \{1,\ldots,{t}\} \mid  {\rm supp}(\mu(a)) \subseteq K\subseteq I(a)\},
\]
where ${\rm supp}(\mu(a))$ denotes  the set of indices $i$ such that $\mu(a)_i \neq 0$. Note that the set ${\cal K}(a)$ is non-empty due to the complementarity condition $\mu(a)\circ x(a)=0$, $\mu(a)\leq 0$, $x(a)\geq 0$, and the fact that $I(a)\neq \emptyset$. Since the B-subdifferential $\partial_B x(a)$ is difficult to compute, we define the multifunction
\begin{equation*}
\partial_{\rm HS}x(a) := \left\{
P\in\mathbb{R}^{{t}\times {t}}\mid P = Q^{-1} - Q^{-1}
I_K^T\left( I_K Q^{-1}I_K^T\right)^{-1}I_K Q^{-1},\; K\in{\cal K}(a)
\right\},
\end{equation*}
as a computational replacement for $\partial_Bx(a)$, where $I_K$ is the matrix consisting of the rows of $I_{t}$, indexed by $K$. The set $\partial_{\rm HS}x(a)$ is the HS-Jacobian of $x(\cdot)$ at $a$ when $a\in \{a\mid \exists \ i \ {\rm s.t.}\  (Q^{-1}a)_i<0\}$. Combining the above two cases, we define the multifunction $\widehat{\partial}_{\rm HS}x(\cdot)$: $\mathbb{R}^{t} 	\rightrightarrows \mathbb{R}^{{t}\times {t}}$ as
\begin{align}
\widehat{\partial}_{\rm HS}x(a) \!\!=\!\! 	\left\{
\begin{aligned}
& Q^{-1} && \mbox{if $Q^{-1}a>0$}\\
&\!\!\left\{
Q^{-1} \!-\! Q^{-1}
I_K^T\left( I_K Q^{-1}I_K^T\right)^{-1} \!\!I_K Q^{-1}\mid K\!\in\! {\cal K}(a)
\right\} && \mbox{if $\exists  i  \ {\rm s.t.}\  (Q^{-1}a)_i\!<\! 0$}\\
&Q^{-1} \cup \left\{
Q^{-1} - Q^{-1}
I_K^T\left( I_K Q^{-1}I_K^T\right)^{-1}I_K Q^{-1}\mid K\in{\cal K}(a)
\right\} && \mbox{otherwise}
\end{aligned}\right.  \label{eq: xpartial}
\end{align}
for any $a\in \mathbb{R}^{t}$, which can be regarded as a generalized Jacobian of $x(a)$.

Define the multifunction $\partial_{\rm HS} {\rm Prox}_{\rho\|w\circ\cdot\|_1^2}:\mathbb{R}^{t}\rightrightarrows \mathbb{R}^{{t}\times {t}}$ by
\begin{align}
\partial_{\rm HS} {\rm Prox}_{\rho\|w\circ\cdot\|_1^2}(a)=\left\{
{\rm Diag}(\theta ) P {\rm Diag}(\theta ) \mid \theta\in {\rm SGN}(a),\ P \in \widehat{\partial}_{\rm HS}x(|a|)
\right\}, \; \forall\; a\in \mathbb{R}^{t},
\label{eq: Jacobian_prox_12}
\end{align}
where $\widehat{\partial}_{\rm HS}x(\cdot)$ is defined in \eqref{eq: xpartial}, and ${\rm SGN}:\mathbb{R}^{t}\rightrightarrows \mathbb{R}^{t}$ is defined as
\begin{align*}
{\rm SGN}(z):=\left\{u\in \mathbb{R}^{t}:u_j\in \left\{\begin{aligned}
&\{ {\rm{sign}}(z_j)\} && \mbox{if}\ z_j \not= 0 \\
&[-1,1] && \mbox{if}\ z_j=0\\
\end{aligned}\right.,\ j = 1,\cdots,{t}\right\}.
\end{align*}

The next proposition states the reason why we can treat $\partial_{\rm HS} {\rm Prox}_{\rho\|w\circ\cdot\|_1^2}(a)$ as the generalized Jacobian of ${\rm Prox}_{\rho\|w\circ\cdot\|_1^2}(\cdot)$ at $a$.
\begin{proposition}\label{prop: property_HS_prox}
	$\partial_{\rm HS} {\rm Prox}_{\rho\|w\circ\cdot\|_1^2}(\cdot)$ is a nonempty, compact valued and upper-semicontinuous multifunction. For any $a\in \mathbb{R}^{t}$, the elements in $\partial_{\rm HS} {\rm Prox}_{\rho\|w\circ\cdot\|_1^2}(a)$ are all symmetric and positive semidefinite. Moreover, ${\rm Prox}_{\rho\|w\circ\cdot\|_1^2}(\cdot)$ is strongly semismooth with respect to $\partial_{\rm HS} {\rm Prox}_{\rho\|w\circ\cdot\|_1^2}(\cdot)$.
\end{proposition}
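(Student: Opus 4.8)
The plan is to verify each claimed property of $\partial_{\rm HS}{\rm Prox}_{\rho\|w\circ\cdot\|_1^2}(\cdot)$ in turn, reducing everything to corresponding statements about $\widehat{\partial}_{\rm HS}x(\cdot)$ and about the multifunction ${\rm SGN}(\cdot)$, and then combining them through the explicit formula \eqref{eq: Jacobian_prox_12}. First I would establish the structural facts about $\widehat{\partial}_{\rm HS}x(\cdot)$: nonemptiness follows because ${\cal K}(a)\ne\emptyset$ (already noted in the text, using $\mu(a)\circ x(a)=0$ and $I(a)\ne\emptyset$ in the active case), and because in the other branches $Q^{-1}$ is always available; compact-valuedness follows because ${\cal K}(a)$ is a finite collection of subsets of $\{1,\dots,t\}$, so $\widehat{\partial}_{\rm HS}x(a)$ is a finite set of matrices. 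For upper-semicontinuity I would invoke the general theory of the HS-Jacobian for strongly convex QPs from \citep{li2018efficiently,han1997newton}: the active set $I(a)$ can only shrink (locally) as $a$ is perturbed, and ${\rm supp}(\mu(a))$ varies semicontinuously, so near any $a$ every index set arising as ${\cal K}(a')$ is contained in ${\cal K}(a)$ — this is precisely the outer-semicontinuity property that makes $\partial_{\rm HS}$ a legitimate surrogate for $\partial_B$. I would also record here that ${\rm SGN}(\cdot)$ is nonempty, compact-valued (each coordinate lives in a compact set) and upper-semicontinuous (the only discontinuity is at $z_j=0$, where the value jumps up to the full interval $[-1,1]$, which is the closed/outer-semicontinuous direction).

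Next I would handle symmetry and positive semidefiniteness. Each element of $\widehat{\partial}_{\rm HS}x(a)$ is either $Q^{-1}$ or $Q^{-1}-Q^{-1}I_K^T(I_KQ^{-1}I_K^T)^{-1}I_KQ^{-1}$. Since $Q = I_t + 2\rho ww^T$ is symmetric positive definite, $Q^{-1}$ is symmetric positive definite; and the second expression is exactly $Q^{-1/2}(I - \Pi_K)Q^{-1/2}$ where $\Pi_K := Q^{-1/2}I_K^T(I_KQ^{-1}I_K^T)^{-1}I_KQ^{-1/2}$ is the orthogonal projector onto the range of $Q^{-1/2}I_K^T$. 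Hence $I-\Pi_K$ is an orthogonal projector, so it is symmetric positive semidefinite, and conjugating by $Q^{-1/2}$ preserves symmetry and positive semidefiniteness. Thus every $P\in\widehat{\partial}_{\rm HS}x(a)$ is symmetric positive semidefinite. Then for any $\theta\in{\rm SGN}(a)$, ${\rm Diag}(\theta)P\,{\rm Diag}(\theta)$ is symmetric (since $P$ and ${\rm Diag}(\theta)$ are) and positive semidefinite (it is a congruence transform of a positive semidefinite matrix: $v^T{\rm Diag}(\theta)P\,{\rm Diag}(\theta)v = (\,{\rm Diag}(\theta)v)^TP({\rm Diag}(\theta)v)\ge 0$). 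Nonemptiness, compactness and upper-semicontinuity of $\partial_{\rm HS}{\rm Prox}_{\rho\|w\circ\cdot\|_1^2}(\cdot)$ then follow from the corresponding properties of $\widehat{\partial}_{\rm HS}x(|a|)$ and ${\rm SGN}(a)$, together with continuity of $a\mapsto|a|$ and of the map $(\theta,P)\mapsto{\rm Diag}(\theta)P\,{\rm Diag}(\theta)$, using that a composition of upper-semicontinuous compact-valued maps is upper-semicontinuous and that the image of a compact set under a continuous map is compact.

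Finally, for the strong semismoothness of ${\rm Prox}_{\rho\|w\circ\cdot\|_1^2}(\cdot)$ with respect to $\partial_{\rm HS}{\rm Prox}_{\rho\|w\circ\cdot\|_1^2}(\cdot)$, I would proceed in two layers. First, $x(\cdot)$ is strongly semismooth with respect to $\widehat{\partial}_{\rm HS}x(\cdot)$: this is a known property of the solution map of a strongly convex QP with only the data $a$ varying — $x(\cdot)$ is piecewise affine (indeed piecewise linear, since the feasible set $\{x\ge 0\}$ is a fixed polyhedral cone), hence Lipschitz and strongly semismooth, and the HS-Jacobian $\widehat{\partial}_{\rm HS}x(a)$ contains, on each affine piece, the corresponding derivative, so the semismoothness estimate holds with this surrogate (cf. \citep{li2018efficiently}). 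Second, by Proposition \ref{prop:proxmapping} we have ${\rm Prox}_{\rho\|w\circ\cdot\|_1^2}(a) = {\rm sign}(a)\circ x(|a|)$; the map $a\mapsto|a|$ is strongly semismooth (it is piecewise affine) and the map $a\mapsto{\rm sign}(a)\circ(\cdot)$ together with the selection of $\theta\in{\rm SGN}(a)$ is handled by the standard chain rule for strongly semismooth functions, noting that near coordinates with $a_i\ne 0$ the signs are locally constant and the composition is simply differentiable, while coordinates with $a_i=0$ contribute $0$ to both sides of the semismoothness inequality. Assembling these via the chain rule — and checking that the generalized Jacobian of the composition is contained in the set described by \eqref{eq: Jacobian_prox_12} — gives strong semismoothness of ${\rm Prox}_{\rho\|w\circ\cdot\|_1^2}(\cdot)$ with respect to $\partial_{\rm HS}{\rm Prox}_{\rho\|w\circ\cdot\|_1^2}(\cdot)$.

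The main obstacle I anticipate is the upper-semicontinuity claim: one has to argue carefully that when $a$ is perturbed slightly, any index set $K'\in{\cal K}(a')$ is already an element of ${\cal K}(a)$, which requires controlling how both the active set $I(\cdot)$ and the support of the dual multiplier $\mu(\cdot)$ move. This is exactly the delicate point that motivated the HS-Jacobian construction in \citep{li2018efficiently}, and I would lean on those results rather than re-deriving the semicontinuity from scratch; the sign-function composition only mildly complicates matters because ${\rm SGN}(\cdot)$ is itself outer-semicontinuous in the right direction.
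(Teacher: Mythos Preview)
Your outline tracks the paper's structure closely; the nonemptiness, compactness, and upper-semicontinuity parts are handled the same way (the paper likewise leans on \citep[Proposition~2]{li2018efficiently} for the inclusion $\widehat{\partial}_{\rm HS}x(|a'|)\subseteq\widehat{\partial}_{\rm HS}x(|a|)$ and on the obvious ${\rm SGN}(a')\subseteq{\rm SGN}(a)$). For positive semidefiniteness your projector argument, rewriting $Q^{-1}-Q^{-1}I_K^T(I_KQ^{-1}I_K^T)^{-1}I_KQ^{-1}$ as $Q^{-1/2}(I-\Pi_K)Q^{-1/2}$ with $\Pi_K$ an orthogonal projector, is correct and more elementary than what the paper does: the paper instead uses the identity $I_K^T(I_KQ^{-1}I_K^T)^{-1}I_K=(\Xi Q^{-1}\Xi)^\dagger$ together with \citep[Proposition~3]{li2018efficiently} to obtain the pseudo-inverse form $({\rm Diag}(\xi)Q{\rm Diag}(\xi))^\dagger$, which it then reuses in Proposition~\ref{compute_M}. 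Either route works.

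Where your proposal is loose is the strong semismoothness step. Invoking a ``standard chain rule for strongly semismooth functions'' is problematic because the decomposition ${\rm Prox}(a)={\rm sign}(a)\circ x(|a|)$ has a discontinuous factor; there is no off-the-shelf chain rule here, and the claim that ``coordinates with $a_i=0$ contribute $0$ to both sides'' is not literally correct (each side can be nonzero --- only their difference is). The paper avoids any chain rule and verifies directly that, for $a'$ near $a$ and any $M={\rm Diag}(\theta)P{\rm Diag}(\theta)\in\partial_{\rm HS}{\rm Prox}(a')$, the residual is \emph{exactly} zero: (i) ${\rm Diag}(\theta)(a'-a)=|a'|-|a|$ since ${\rm SGN}(a')\subseteq{\rm SGN}(a)$; (ii) $x(|a'|)-x(|a|)=P(|a'|-|a|)$ exactly, by \citep[Proposition~2]{li2018efficiently}; and (iii) --- the key point your sketch does not isolate --- Proposition~\ref{prop: sign} forces $\theta_i={\rm sign}(a'_i)$ (resp.\ ${\rm sign}(a_i)$) whenever $x(|a'|)_i\neq 0$ (resp.\ $x(|a|)_i\neq 0$), so $\theta\circ x(|a'|)={\rm sign}(a')\circ x(|a'|)$ and $\theta\circ x(|a|)={\rm sign}(a)\circ x(|a|)$. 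That coordinate-wise identification of $\theta$ with the sign on the support of $x(|\cdot|)$ is the substantive ingredient; once you add it, your argument collapses into the paper's direct verification.
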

\begin{proof}
	By the definition of $\partial_{\rm HS} {\rm Prox}_{\rho\|w\circ\cdot\|_1^2}(\cdot)$, we can see that it is a nonempty and compact valued multifunction. Fix $a\in \mathbb{R}^{t}$. The symmetry of the elements in $\partial_{\rm HS} {\rm Prox}_{\rho\|w\circ\cdot\|_1^2}(a)$ follows naturally by the definition in \eqref{eq: Jacobian_prox_12}. In order to prove that the elements in $\partial_{\rm HS} {\rm Prox}_{\rho\|w\circ\cdot\|_1^2}(a)$ are all positive semidefinite, it suffices to prove that the elements in $\widehat{\partial}_{\rm HS}x(a)$ are all positive semidefinite. The case when $Q^{-1}a>0$ is obvious from the definition of $Q$. Thus we only need to consider the case when there exists $i$ such that $(Q^{-1}a)_i<0$. For any $K\in {\cal K}(a)$, denote $\xi\in \mathbb{R}^{t}$ with $\xi_i = 0$ if $i\in K$, and $\xi_i = 1$ otherwise. Let $\Xi = I_{t}-{\rm Diag}(\xi)$. After some algebraic multiplications, we can see that
	\begin{align*}
	I_K^T\left( I_K Q^{-1}I_K^T\right)^{-1}I_K=(\Xi Q^{-1}\Xi)^{\dagger}=\Xi (\Xi Q^{-1}\Xi)^{\dagger}\Xi,
	\end{align*}
	where the last equality follows from the fact that $\Xi$ is a $0$-$1$ diagonal matrix. Then by \citep[Proposition 3]{li2018efficiently}, we have
	\begin{align*}
	Q^{-1} - Q^{-1}I_K^T\left( I_K Q^{-1}I_K^T\right)^{-1}I_K Q^{-1}=Q^{-1} - Q^{-1}\Xi (\Xi Q^{-1}\Xi)^{\dagger}\Xi Q^{-1}= ({\rm Diag}(\xi) Q {\rm Diag}(\xi))^{\dagger}\succeq 0,
	\end{align*}
	which further implies that the elements in $\partial_{\rm HS} {\rm Prox}_{\rho\|w\circ\cdot\|_1^2}(a)$ are all positive semidefinite.
	
	According to \citep[Proposition 2]{li2018efficiently} and the definition of $\widehat{\partial}_{\rm HS}x(\cdot)$ in \eqref{eq: xpartial} , we know that for the given $a$, there exists a neighborhood $U$ of $a$ such that for any $a' \in U$, $\widehat{\partial}_{\rm HS}x(|a'|)\subseteq \widehat{\partial}_{\rm HS}x(|a|)$ and
	\begin{align}
	x(|a'|) - x(|a|) - P(|a'| - |a|)=0,\quad \forall P \in \widehat{\partial}_{\rm HS}x(|a'|).\label{eq: strongly_xa}
	\end{align}
	By the definition of ${\rm SGN}(\cdot)$, if we take the neighbourhood $U$ to be sufficiently small, then we have ${\rm SGN}(a')\subseteq {\rm SGN}(a)$ for any $a'\in U$. Therefore, it holds that $\partial_{\rm HS} {\rm Prox}_{\rho\|w\circ\cdot\|_1^2}(a')\subseteq \partial_{\rm HS} {\rm Prox}_{\rho\|w\circ\cdot\|_1^2}(a)$ for all $a'\in U$, which implies that $\partial_{\rm HS} {\rm Prox}_{\rho\|w\circ\cdot\|_1^2}$ is upper-semicontinuous at $a$. Since ${\rm Prox}_{\rho\|w\circ\cdot\|_1^2}(\cdot)$ is piecewise linear and Lipschitz continuous, it is directionally differentiable according to \citep{facchinei2007finite}. Note that for all $a'\in U$, since ${\rm SGN}(a')\subseteq {\rm SGN}(a)$, we have ${\rm Diag}(\theta) (a'-a)=|a'|-|a|$ with any $\theta \in {\rm SGN}(a')$. Therefore, from \eqref{eq: strongly_xa}, it holds that for any $a'\in U$,
	\begin{align*}
	\theta \circ x(|a'|) - \theta \circ x(|a|) - {\rm Diag}(\theta )P{\rm Diag}(\theta )(a'-a)=0,\quad \forall
	\theta \in {\rm SGN}(a'),\ \forall P \in \widehat{\partial}_{\rm HS}x(|a'|).
	\end{align*}
	By Proposition \ref{prop: sign}, for any $i$, if $(x(|a'|))_i\neq 0$, then we must have $a'_i\neq 0$, which further implies $\theta_i={\rm sign}(a'_i)$ for each $\theta\in {\rm SGN}(a')$. Therefore, we know that for all $a'\in U$,
	\begin{align*}
	\theta \circ x(|a'|) = {\rm sign}(a')\circ x(|a'|),\quad \theta \circ x(|a|) = {\rm sign}(a)\circ x(|a|),\quad \forall \theta \in {\rm SGN}(a')\subseteq {\rm SGN}(a).
	\end{align*}
	That is to say, when $a'\in U$,
	\begin{align*}
	{\rm Prox}_{\rho\|w\circ\cdot\|_1^2}(a')-{\rm Prox}_{\rho\|w\circ\cdot\|_1^2}(a)-M(a'-a)=0,\quad \forall M\in \partial_{\rm HS} {\rm Prox}_{\rho\|w\circ\cdot\|_1^2}(a').
	\end{align*}
	Thus ${\rm Prox}_{\rho\|w\circ\cdot\|_1^2}(\cdot)$ is strongly semismooth with respect to $\partial_{\rm HS} {\rm Prox}_{\rho\|w\circ\cdot\|_1^2}(\cdot)$ at $a$.
\end{proof}

In practice, we always need a specific element in $\partial_{\rm HS} {\rm Prox}_{\rho\|w\circ\cdot\|_1^2}(a)$ at any $a \in \mathbb{R}^{t}$. In the following proposition, we provide a highly efficient way to construct one specific element in $\partial_{\rm HS} {\rm Prox}_{\rho\|w\circ\cdot\|_1^2}(a)$, which is a $0$-$1$ diagonal matrix plus a rank-one correction.
\begin{proposition}\label{compute_M}
	Given $a \in \mathbb{R}^{t}$. The following properties hold.
	\begin{enumerate}
		\item If $Q^{-1}a\geq 0$, we have that $Q^{-1}=I-\frac{2\rho}{1+2\rho w^T w}ww^T$ is an element in $\partial_{\rm HS} {\rm Prox}_{\rho\|w\circ\cdot\|_1^2}(a)$;
		\item If there exists $i\in \{1,\cdots,{t}\}$ such that $(Q^{-1}a)_i<0$, denote
		\begin{align*}
		P_0 :=  Q^{-1} - Q^{-1}I_{I(|a|)}^T
		(I_{I(|a|)} Q^{-1}
		I_{I(|a|)}^T)^{-1}
		I_{I(|a|)}Q^{-1},
		\end{align*}
		where $I(\cdot)$ is defined in \eqref{eq:Ia}, the matrix
		\begin{equation}\label{eq:M0}
		M_0 := {\rm Diag}({\rm sign}(a)) P_0  {\rm Diag}({\rm sign}(a))
		\end{equation}
		is an element in the set $\partial_{\rm HS} {\rm Prox}_{\rho\|w\circ\cdot\|_1^2}(a)$. Moreover, if we define $\xi\in \mathbb{R}^{t}$ with $\xi_i = 0$ when $i\in I(|a|)$ but $\xi_i = 1$ otherwise, and $\tilde{w} :=({\rm sign}(a)\circ \xi)\circ w$, then $M_0$ defined in \eqref{eq:M0} can be computed as
		\begin{equation*}
		M_0 = {\rm Diag}(\xi)-\frac{2\rho}{1+2\rho (\tilde{w}^T\tilde{w})} \tilde{w}\tilde{w}^T.
		\end{equation*}
	\end{enumerate}
\end{proposition}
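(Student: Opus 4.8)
The plan is, in both items, to verify membership in $\partial_{\rm HS}{\rm Prox}_{\rho\|w\circ\cdot\|_1^2}(a)$ straight from its definition \eqref{eq: Jacobian_prox_12}: exhibit an admissible $\theta\in{\rm SGN}(a)$ and a $P\in\widehat{\partial}_{\rm HS}x(|a|)$ whose sandwich ${\rm Diag}(\theta)P{\rm Diag}(\theta)$ equals the claimed matrix. For item~1 the crucial preliminary fact is that the hypothesis $Q^{-1}a\ge 0$ already forces $a\ge 0$: since $Q=I_t+2\rho ww^T$, we have $a=Q(Q^{-1}a)=Q^{-1}a+2\rho(w^TQ^{-1}a)w$, and because $w>0$ and $Q^{-1}a\ge 0$ both summands are nonnegative. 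Hence $|a|=a$ and $Q^{-1}|a|=Q^{-1}a\ge 0$, so $\widehat{\partial}_{\rm HS}x(|a|)$ lies in the first or the third branch of \eqref{eq: xpartial}, each of which contains $Q^{-1}$. Taking $\theta$ to be the vector of all ones (which belongs to ${\rm SGN}(a)$ precisely because $a\ge 0$) gives ${\rm Diag}(\theta)Q^{-1}{\rm Diag}(\theta)=Q^{-1}$, proving item~1.

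For item~2 I first argue that $P_0\in\widehat{\partial}_{\rm HS}x(|a|)$. By the complementarity relation $\mu(|a|)\circ x(|a|)=0$ one has ${\rm supp}(\mu(|a|))\subseteq I(|a|)$, so $K:=I(|a|)$ belongs to ${\cal K}(|a|)$; and when instead $I(|a|)=\emptyset$ --- which happens exactly when $Q^{-1}|a|>0$ --- the correction term vanishes, $P_0=Q^{-1}$, and $\widehat{\partial}_{\rm HS}x(|a|)=\{Q^{-1}\}$. Running through the three branches of \eqref{eq: xpartial} for the argument $|a|$ then shows in every case that $P_0$, built from $K=I(|a|)$, is one of the listed matrices. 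Since ${\rm sign}(a)\in{\rm SGN}(a)$ unconditionally, definition \eqref{eq: Jacobian_prox_12} gives $M_0={\rm Diag}({\rm sign}(a))P_0{\rm Diag}({\rm sign}(a))\in\partial_{\rm HS}{\rm Prox}_{\rho\|w\circ\cdot\|_1^2}(a)$.

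To obtain the closed form I reuse the identity already established inside the proof of Proposition~\ref{prop: property_HS_prox} (which rests on \citep[Proposition 3]{li2018efficiently}): with $K=I(|a|)$ and $\xi$ as in the statement, $P_0=({\rm Diag}(\xi)Q{\rm Diag}(\xi))^{\dagger}$. Since $\xi$ is $0$-$1$, ${\rm Diag}(\xi)Q{\rm Diag}(\xi)={\rm Diag}(\xi)+2\rho(\xi\circ w)(\xi\circ w)^T$, which is zero outside the coordinate block indexed by $\{i:\xi_i=1\}$ and positive definite on it; a Sherman--Morrison computation on that block, followed by re-embedding, yields $P_0={\rm Diag}(\xi)-\frac{2\rho}{1+2\rho(\xi\circ w)^T(\xi\circ w)}(\xi\circ w)(\xi\circ w)^T$. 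It remains to conjugate by ${\rm Diag}({\rm sign}(a))$: for every $i$ with $\xi_i=1$ we have $x(|a|)_i>0$, hence $|a|_i\neq 0$ by Proposition~\ref{prop: sign}, hence ${\rm sign}(a_i)^2=1$; this gives ${\rm Diag}({\rm sign}(a)){\rm Diag}(\xi){\rm Diag}({\rm sign}(a))={\rm Diag}(\xi)$, ${\rm sign}(a)\circ(\xi\circ w)=\tilde w$, and $(\xi\circ w)^T(\xi\circ w)=\tilde w^T\tilde w$, delivering $M_0={\rm Diag}(\xi)-\frac{2\rho}{1+2\rho(\tilde w^T\tilde w)}\tilde w\tilde w^T$.

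The step I expect to require the most care is the case analysis establishing $P_0\in\widehat{\partial}_{\rm HS}x(|a|)$: one must track which branch of \eqref{eq: xpartial} is active for the argument $|a|$ (note that $(Q^{-1}a)_i<0$ for some $i$ does not by itself rule out $I(|a|)=\emptyset$, nor does it pin down the sign pattern of $Q^{-1}|a|$) and confirm that $K=I(|a|)$ is a legitimate member of ${\cal K}(|a|)$ in each branch. The linear-algebra steps --- the pseudoinverse identity and the Sherman--Morrison update --- are routine once the zero-block structure of ${\rm Diag}(\xi)Q{\rm Diag}(\xi)$ is isolated.
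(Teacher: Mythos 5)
Your proof is correct and follows essentially the same route as the paper: identify $P_0=({\rm Diag}(\xi)Q{\rm Diag}(\xi))^{\dagger}$ via the pseudoinverse identity from the proof of Proposition~\ref{prop: property_HS_prox}, apply Sherman--Morrison on the active block, and use Proposition~\ref{prop: sign} to show the conjugation by ${\rm Diag}({\rm sign}(a))$ acts trivially on the ${\rm Diag}(\xi)$ part. The only difference is that you spell out the membership claims that the paper dismisses as immediate --- in particular the observation that $Q^{-1}a\ge 0$ forces $a\ge 0$, and the branch analysis for $\widehat{\partial}_{\rm HS}x(|a|)$ including the degenerate case $I(|a|)=\emptyset$ --- which is a welcome addition rather than a deviation.
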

\begin{proof}
	(1) and the first part of (2) follow immediately from the definition of $\partial_{\rm HS} {\rm Prox}_{\rho\|w\circ\cdot\|_1^2}(\cdot)$. Suppose there exists $i$ such that $(Q^{-1}a)_i<0$. Similarly to the proof in Proposition \ref{prop: property_HS_prox}, we have
	\begin{align*}
	P_0 = ({\rm Diag}(\xi) Q {\rm Diag}(\xi))^{\dagger}.
	\end{align*}
	Denote $\hat{w}=\xi \circ w$. Since $Q = I_{t} +  2\rho ww^T \in \mathbb{R}^{t}\times {t}$, it holds that
	\begin{align*}
	P_0 &= ({\rm Diag}(\xi) Q {\rm Diag}(\xi))^{\dagger}
	= ({\rm Diag}(\xi) + 2\rho \hat{w}\hat{w}^T)^{\dagger}={\rm Diag}(\xi) -
	\frac{2\rho}{1+2\rho (\hat{w}^T\hat{w})} \hat{w}\hat{w}^T.
	\end{align*}
	Next we show that ${\rm sign}(a)\circ{\rm sign}(a)\circ\xi = \xi$. First, we note that $a_i=0$ implies that $(x(|a|))_i=0$, and hence $\xi_i=0$. Thus if $a_i =0$, then ${\rm sign}(a_i)^2 \xi_i = 0 = \xi_i$. For the case when $a_i\not=0$, we clearly have ${\rm sign}(a_i)^2 \xi_i = \xi_i$. Similarly, we can prove that $\hat{w}^T\hat{w}=\tilde{w}^T\tilde{w}$. Now it is easy to see that
	\begin{align*}
	M_0 &=   {\rm Diag}({\rm sign}(a)) \Big({\rm Diag}(\xi)-
	\frac{2\rho}{1+2\rho (\hat{w}^T\hat{w})} \hat{w}\hat{w}^T\Big)  {\rm Diag}({\rm sign}(a)) = {\rm Diag}(\xi) -
	\frac{2\rho}{1+2\rho (\tilde{w}^T\tilde{w})} \tilde{w}\tilde{w}^T.
	\end{align*}
	This completes the proof.
\end{proof}

\subsection{The proximal mapping of $\Delta^{\mathcal{G},w}(\cdot)$ and its generalized Jacobian}
Based on the equality \eqref{eq: reformulation_exclusive} and the previous discussions, we summarize the following proposition, which gives the proximal mapping of $\Delta^{\mathcal{G},w}(\cdot)$ and its corresponding generalized Jacobian.
\begin{proposition}\label{prop: proxmapping_and_jacobian}
	Given $\nu>0$, and $p(\cdot)=\Delta^{\mathcal{G},w}(\cdot)$. The following statements hold.
	
	\begin{enumerate}
		\item The proximal mapping ${\rm Prox}_{\nu p}(\cdot)$ can be computed as
	\begin{align*}
	{\rm Prox}_{\nu p}(x)&={\cal P}^T
	\underset{y\in \mathbb{R}^n}{\arg\min}\  \Big\{
	\frac{1}{2}\|y-{\cal P}x\|^2+\nu \sum_{j=1}^l \|({\cal P} w)^{(j)} \circ y^{(j)}\|_1^2\Big\}\\
	&={\cal P}^T \big[{\rm Prox}_{\nu\|({\cal P} w)^{(1)}\circ\cdot\|_1^2}(({\cal P} x)^{(1)});\cdots;{\rm Prox}_{\nu\|({\cal P} w)^{(l)}\circ\cdot\|_1^2}(({\cal P} x)^{(l)})\big],
	\end{align*}
	where ${\rm Prox}_{\nu\|({\cal P} w)^{(j)}\circ\cdot\|_1^2}(\cdot)$, for each $j=1,\cdots,l$, is defined in Proposition \ref{prop:proxmapping}.
	
	\item Define the multifunction $\partial_{\rm HS}{\rm Prox}_{\nu p}: \mathbb{R}^{t}\rightrightarrows \mathbb{R}^{{t}\times {t}}$ as
	\begin{align*}
	\partial_{\rm HS}{\rm Prox}_{\nu p}(x)=\left\{ {\cal P}^T {\rm Diag}(M_1,\cdots,M_l){\cal P}\mid M_j\in \partial_{\rm HS} {\rm Prox}_{\nu\|({\cal P} w)^{(j)}\circ\cdot\|_1^2}(({\cal P} x)^{(j)}), j = 1,\cdots,l	\right\},
	\end{align*}
	where $\partial_{\rm HS} {\rm Prox}_{\nu\|({\cal P} w)^{(j)}\circ\cdot\|_1^2}(\cdot)$, for each $j = 1,\cdots,l$, is defined in \eqref{eq: Jacobian_prox_12}. Then $\partial_{\rm HS}{\rm Prox}_{\nu p}(\cdot)$ can be regarded as the generalized Jacobian of ${\rm Prox}_{\nu p}(\cdot)$ satisfying the following properties.
	\begin{itemize}
		\item[(a)] $\partial_{\rm HS}{\rm Prox}_{\nu p}(\cdot)$ is a nonempty, compact valued and upper-semicontinuous multifunction;
		\item[(b)] for any $x\in \mathbb{R}^n$, the elements in $\partial_{\rm HS}{\rm Prox}_{\nu p}(x)$ are symmetric and positive semidefinite;
		\item[(c)] ${\rm Prox}_{\nu p}(\cdot)$ is strongly semismooth with respect to $\partial_{\rm HS}{\rm Prox}_{\nu p}(\cdot)$.
	\end{itemize}
	In addition, we can construct a specific element in $\partial_{\rm HS}{\rm Prox}_{\nu p}(x)$ according to Proposition \ref{compute_M}.
	\end{enumerate}
\end{proposition}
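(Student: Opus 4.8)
The plan is to reduce the whole statement to the one-block results already in hand, namely Proposition~\ref{prop:proxmapping}, Proposition~\ref{prop: property_HS_prox} and Proposition~\ref{compute_M}, using the single structural fact that ${\cal P}=[{\cal P}_1;\cdots;{\cal P}_l]$ merely reorders coordinates and is therefore a permutation (orthogonal) matrix: ${\cal P}^T{\cal P}={\cal P}{\cal P}^T=I_n$ and $\|{\cal P}z-{\cal P}x\|=\|z-x\|$ for all $x,z\in\mathbb{R}^n$. For Part~1, I would start from the definition
\[
{\rm Prox}_{\nu p}(x)=\underset{z\in\mathbb{R}^n}{\arg\min}\ \Big\{\frac{1}{2}\|z-x\|^2+\nu\sum_{j=1}^l\|w_{g_j}\circ z_{g_j}\|_1^2\Big\},
\]
change variables to $y={\cal P}z$, invoke the reformulation \eqref{eq: reformulation_exclusive} together with $\|{\cal P}z-{\cal P}x\|^2=\sum_{j=1}^l\|y^{(j)}-({\cal P}x)^{(j)}\|^2$, and observe that the resulting objective in $y$ is completely separable across the blocks $y^{(1)},\cdots,y^{(l)}$. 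Minimizing each block independently yields exactly ${\rm Prox}_{\nu\|({\cal P}w)^{(j)}\circ\cdot\|_1^2}(({\cal P}x)^{(j)})$ as in Proposition~\ref{prop:proxmapping}, and applying ${\cal P}^T$ recovers the stated formula; this step is pure bookkeeping.

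For Part~2, properties (a) and (b) follow by pushing the corresponding single-block properties from Proposition~\ref{prop: property_HS_prox} through two elementary operations, the block-diagonal assembly $(M_1,\cdots,M_l)\mapsto{\rm Diag}(M_1,\cdots,M_l)$ and the conjugation $N\mapsto{\cal P}^TN{\cal P}$. Each factor $\partial_{\rm HS}{\rm Prox}_{\nu\|({\cal P}w)^{(j)}\circ\cdot\|_1^2}(({\cal P}x)^{(j)})$ is nonempty and compact by Proposition~\ref{prop: property_HS_prox}, and the assembly-plus-conjugation map is linear, hence continuous, so $\partial_{\rm HS}{\rm Prox}_{\nu p}(x)$ is the continuous image of a product of nonempty compact sets, hence nonempty and compact; upper semicontinuity follows because each $x\mapsto({\cal P}x)^{(j)}$ is linear and each single-block multifunction is upper semicontinuous, so on the intersection of the $l$ corresponding neighborhoods of $x$ we get $\partial_{\rm HS}{\rm Prox}_{\nu p}(x')\subseteq\partial_{\rm HS}{\rm Prox}_{\nu p}(x)$. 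For (b), ${\rm Diag}(M_1,\cdots,M_l)$ is symmetric and positive semidefinite whenever all $M_j$ are, and since ${\cal P}$ is orthogonal, $v^T{\cal P}^T{\rm Diag}(M_1,\cdots,M_l){\cal P}v=({\cal P}v)^T{\rm Diag}(M_1,\cdots,M_l)({\cal P}v)\geq0$ for every $v\in\mathbb{R}^n$.

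For (c) I would mimic, block by block, the exact-identity argument used inside the proof of Proposition~\ref{prop: property_HS_prox}. First, ${\rm Prox}_{\nu p}(\cdot)$ is obtained from the piecewise linear, globally Lipschitz block maps ${\rm Prox}_{\nu\|({\cal P}w)^{(j)}\circ\cdot\|_1^2}(\cdot)$ and the fixed linear maps $x\mapsto({\cal P}x)^{(j)}$ by coordinate extraction and reassembly, so it is itself piecewise linear, globally Lipschitz and directionally differentiable. Second, fixing $x$, the proof of Proposition~\ref{prop: property_HS_prox} furnishes, for each $j$, a neighborhood of $({\cal P}x)^{(j)}$ on which
\[
{\rm Prox}_{\nu\|({\cal P}w)^{(j)}\circ\cdot\|_1^2}(a')-{\rm Prox}_{\nu\|({\cal P}w)^{(j)}\circ\cdot\|_1^2}(({\cal P}x)^{(j)})-M_j\big(a'-({\cal P}x)^{(j)}\big)=0
\]
for every $M_j\in\partial_{\rm HS}{\rm Prox}_{\nu\|({\cal P}w)^{(j)}\circ\cdot\|_1^2}(a')$; taking $a'=({\cal P}x')^{(j)}$ with $x'$ in a sufficiently small neighborhood of $x$, stacking the $l$ identities, and left-multiplying by ${\cal P}^T$ while using ${\cal P}^T{\cal P}=I_n$ yields ${\rm Prox}_{\nu p}(x')-{\rm Prox}_{\nu p}(x)-M(x'-x)=0$ for all $M\in\partial_{\rm HS}{\rm Prox}_{\nu p}(x')$, which is considerably stronger than strong semismoothness of ${\rm Prox}_{\nu p}(\cdot)$ with respect to $\partial_{\rm HS}{\rm Prox}_{\nu p}(\cdot)$. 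Finally, the constructive claim is immediate: apply Proposition~\ref{compute_M} to each block $({\cal P}x)^{(j)}$ to obtain an explicit $M_j$ (a $0$-$1$ diagonal matrix plus a rank-one correction) and assemble ${\cal P}^T{\rm Diag}(M_1,\cdots,M_l){\cal P}$.

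I do not anticipate a genuine obstacle; the whole proposition is a separable/direct-sum lift of the single-block statements. The only mildly delicate point is ensuring that the neighborhoods used for upper semicontinuity and for the semismoothness identity can be chosen uniformly, but this is harmless: there are only finitely many ($l$) blocks, and the coordinate-extraction maps $x\mapsto({\cal P}x)^{(j)}$ are linear and continuous, so a finite intersection of (pulled-back) neighborhoods suffices.
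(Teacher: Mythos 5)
Your proposal is correct and follows exactly the route the paper intends: the paper states this proposition without proof as a direct consequence of the reformulation \eqref{eq: reformulation_exclusive} and Propositions \ref{prop:proxmapping}, \ref{prop: property_HS_prox} and \ref{compute_M}, namely block separability after the coordinate permutation ${\cal P}$ followed by a blockwise lift of the single-group properties. Your write-up simply makes explicit the bookkeeping (orthogonality of ${\cal P}$, separability of the objective, stacking of the exact local identities) that the paper leaves to the reader.
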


\section{A semismooth Newton method for solving the dual of the PPA subproblem}
\label{sec:ssn}
Note that the key challenge in executing the preconditioned PPA is whether the dual of the subproblem can be solved efficiently. We will design a highly efficient second-order type algorithm, which is expected to be superlinearly (or even quadratically) convergent. Before going into detail of the algorithm design, we first present the following proposition to show the strict concavity of the function $\psi_k(\cdot)$, which ensures that the problem \eqref{eq:ATA_D} admits a unique maximizer.

\begin{proposition}\label{prop: jacobian_h}
	Suppose $h(\cdot)$ is convex and twice continuously differentiable. For any $\nu>0$, ${\rm Prox}_{\nu h}(z)$ is differentiable with
	\begin{align*}
		\nabla {\rm Prox}_{\nu h}(z)=(I_m+\nu \nabla^2 h({\rm Prox}_{\nu h}(z)))^{-1},\quad \forall z\in \mathbb{R}^m.
	\end{align*}
	Therefore, $0 \prec \nabla {\rm Prox}_{\nu h}(z) \preceq I_m$ for any $z\in \mathbb{R}^m$, and  $\theta(z):=\|z\|^2/2-{\rm E}_{\nu h}(z)$ is strictly convex.
\end{proposition}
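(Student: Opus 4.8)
The plan is to prove the three assertions in turn: the formula for $\nabla {\rm Prox}_{\nu h}(z)$, the two-sided bound on this gradient, and the strict convexity of $\theta$. The starting point is the optimality condition defining $y^* := {\rm Prox}_{\nu h}(z)$, namely $0 = \nu \nabla h(y^*) + (y^* - z)$, equivalently $z = y^* + \nu \nabla h(y^*) =: F(y^*)$. Since $h$ is convex and twice continuously differentiable, the map $F(y) = y + \nu \nabla h(y)$ is continuously differentiable with Jacobian $\nabla F(y) = I_m + \nu \nabla^2 h(y)$, and because $\nabla^2 h(y) \succeq 0$ this Jacobian satisfies $\nabla F(y) \succeq I_m \succ 0$, so it is everywhere invertible. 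Hence $F$ is a global $C^1$ diffeomorphism and ${\rm Prox}_{\nu h} = F^{-1}$ is $C^1$; the inverse function theorem then gives $\nabla {\rm Prox}_{\nu h}(z) = (\nabla F(y^*))^{-1} = (I_m + \nu \nabla^2 h({\rm Prox}_{\nu h}(z)))^{-1}$, which is the claimed formula.

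For the bound, note that $A := I_m + \nu \nabla^2 h(y^*)$ is symmetric with $A \succeq I_m$, so all eigenvalues of $A$ lie in $[1, \infty)$, and therefore all eigenvalues of $A^{-1}$ lie in $(0, 1]$. This yields $0 \prec \nabla {\rm Prox}_{\nu h}(z) \preceq I_m$ for every $z \in \mathbb{R}^m$. Finally, for $\theta(z) = \tfrac{1}{2}\|z\|^2 - {\rm E}_{\nu h}(z)$, recall from the preliminaries that $\nabla {\rm E}_{\nu h}(z) = z - {\rm Prox}_{\nu h}(z)$, so $\nabla \theta(z) = z - (z - {\rm Prox}_{\nu h}(z)) = {\rm Prox}_{\nu h}(z)$, which is differentiable by the first part, giving $\nabla^2 \theta(z) = \nabla {\rm Prox}_{\nu h}(z) \succ 0$. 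A positive definite Hessian everywhere implies $\theta$ is strictly convex.

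I do not expect any serious obstacle here; the argument is a routine application of the inverse function theorem together with the Moreau envelope gradient identity. The only point requiring a little care is justifying global (rather than merely local) differentiability and invertibility of ${\rm Prox}_{\nu h}$: this is handled by observing that $F$ is a proper $C^1$ map with everywhere nonsingular Jacobian bounded below by $I_m$, hence a diffeomorphism onto $\mathbb{R}^m$ (alternatively, one may simply invoke the known $C^1$ smoothness of the proximal mapping of a $C^2$ convex function and then differentiate the optimality condition implicitly). One should also remark that ${\rm Prox}_{\nu h}(z)$ being well defined and the Moreau envelope being finite everywhere is guaranteed because $h$, being convex and real-valued on $\mathbb{R}^m$, is proper, closed, and in fact continuous.
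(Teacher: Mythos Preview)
Your proposal is correct and follows essentially the same route as the paper: both arguments differentiate the optimality condition $z = y^* + \nu \nabla h(y^*)$ to obtain the Jacobian formula, then deduce the eigenvalue bounds and the strict convexity of $\theta$ from $\nabla\theta = {\rm Prox}_{\nu h}$. The only cosmetic difference is that the paper phrases the first step via the implicit function theorem applied to $F(u,v)=v-u+\nu\nabla h(v)$, whereas you phrase it via the inverse function theorem applied to $F(y)=y+\nu\nabla h(y)$; these are equivalent here, and your remark that $F$ has Jacobian $\succeq I_m$ (hence is a global diffeomorphism) cleanly handles the passage from local to global differentiability.
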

\begin{proof}
	Define $F:\mathbb{R}^{2m}\rightarrow \mathbb{R}^m$ as $F(u,v) = v-u+\nu \nabla h(v)$, for all $  (u,v)\in \mathbb{R}^m\times\mathbb{R}^m$.
	The optimality condition of $\min_w\{\frac{1}{2}\|w-z\|^2+\nu h(w)\}$ implies that for any $z\in \mathbb{R}^m$, there exists a unique $w$ such that $F(z,w)=0$, which is denoted as ${\rm Prox}_{\nu h}(z)$. Let the Jacobian of $F$ with respect to $u$ and $v$ be denoted as $J_{F,u}$ and $J_{F,v}$, respectively. We have that $	J_{F,v}(z,w)=I_m +\nu \nabla^2 h(w)$
	is invertible. According to the implicit function theorem, we know that there exists an open set $U\subset \mathbb{R}^m$ containing $z$ such that there exists a unique continuously differentiable function $g: U\rightarrow \mathbb{R}^m$ such that $g(z)=w$ and
	\begin{align*}
		&F(u,g(u))=g(u)-u+\nu \nabla h(g(u))=0,\quad \forall u\in U,\\
		&\nabla g(u)=-[J_{F,v}(u,g(u))]^{-1}J_{F,u}(u,g(u))=(I_m +\nu \nabla^2 h(g(u)))^{-1},\quad \forall u\in U.
	\end{align*}
	Combining the uniqueness of the function $g(\cdot)$ and the definition of ${\rm Prox}_{\nu h}(\cdot)$, we have that ${\rm Prox}_{\nu h}(u)=g(u)$ for all $u\in U$ and
	$\nabla {\rm Prox}_{\nu h}(z )= (I_m +\nu \nabla^2 h({\rm Prox}_{\nu h}(z)))^{-1}$.
	The remaining part of the conclusion follows naturally since $\nabla \theta (z)= {\rm Prox}_{\nu h}(z )$.
\end{proof}

\subsection{A semismooth Newton algorithmic framework}
Since $\psi_k$ is strictly concave and continuously differentiable, the unique maximizer can be computed by solving the equation
\begin{align}
	\nabla \psi_k(u)=-{\rm Prox}_{\sigma_k h/\tau}(Ax^k+\frac{\sigma_k}{\tau}u)+A {\rm Prox}_{\sigma_k \lambda p}(x^k+\sigma_k c-\sigma_kA^Tu)=0.\label{eq: newton-system}
\end{align}
Note that $\nabla \psi_k(\cdot)$ is Lipschitz continuous, but nondifferentiable. We propose a semismooth Newton (SSN) method to solve \eqref{eq: newton-system}. The concept of semismoothness can be found in \citep{kummer1988newton,mifflin1977semismooth,qi1993nonsmooth,sun2002semismooth}.

From now on, we consider the case when $p(\cdot)=\Delta^{{\cal G},w}(\cdot)$. Define the multifunction $\hat{\partial}^2 \psi_k(\cdot):\mathbb{R}^m\rightrightarrows \mathbb{R}^{m\times m}$ as follows: for any $u\in \mathbb{R}^m$,
\begin{align}
	\hat{\partial}^2 \psi_k(u)  :=  -\frac{\sigma_k}{\tau}\nabla {\rm Prox}_{\sigma_k h/\tau}(Ax^k+\frac{\sigma_k}{\tau}u)  - \sigma_kA\partial_{\rm HS}{\rm Prox}_{\sigma_k \lambda p}(x^k + \sigma_k c - \sigma_kA^Tu)A^T,
	\label{eq: jacobian}
\end{align}
where $\partial_{\rm HS} {\rm Prox}_{\sigma_k \lambda  p}(\cdot)$ is defined in Proposition \ref{prop: proxmapping_and_jacobian}. The following proposition states that $\hat{\partial}^2 \psi_k(\cdot)$ can be treated as the generalized Jacobian of $\nabla\psi_k(\cdot)$, which follows from Propositions \ref{prop: jacobian_h} and \ref{prop: proxmapping_and_jacobian}.
\begin{proposition}\label{prop: jacobian_psi}
	For the case when $p(\cdot)=\Delta^{{\cal G},w}(\cdot)$, the multifunction $\hat{\partial}^2 \psi_k(\cdot)$ defined in \eqref{eq: jacobian} satisfies the following properties:
	\begin{itemize}
		\item[(1)] $\hat{\partial}^2 \psi_k(\cdot)$ is a nonempty, compact valued, upper-semicontinuous multifunction;
		\item[(2)] for any $u\in \mathbb{R}^m$, all the elements in $\hat{\partial}^2 \psi_k(u) $ are symmetric and negative definite;
		\item[(3)] $\nabla \psi_k(\cdot)$ is strongly semismooth with respect to $\hat{\partial}^2 \psi_k(\cdot)$.
	\end{itemize}
\end{proposition}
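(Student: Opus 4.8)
The plan is to verify each of the three properties by combining the corresponding properties of the two building blocks in \eqref{eq: jacobian}: the single-valued map $\nabla {\rm Prox}_{\sigma_k h/\tau}(\cdot)$ analyzed in Proposition \ref{prop: jacobian_h}, and the multifunction $\partial_{\rm HS}{\rm Prox}_{\sigma_k \lambda p}(\cdot)$ analyzed in Proposition \ref{prop: proxmapping_and_jacobian}. I would first record that $\hat{\partial}^2 \psi_k(u)$ is an affine image (via the fixed linear maps $a \mapsto Aa$, $a\mapsto A^T a$, and scalar multiplication) of the Cartesian-type combination of these two objects evaluated at the smooth points $Ax^k + \frac{\sigma_k}{\tau}u$ and $x^k + \sigma_k c - \sigma_k A^T u$, which move continuously with $u$.

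For property (1), I would argue nonemptiness and compact-valuedness directly: $\nabla {\rm Prox}_{\sigma_k h/\tau}$ is a well-defined single matrix (Proposition \ref{prop: jacobian_h}) and $\partial_{\rm HS}{\rm Prox}_{\sigma_k \lambda p}(\cdot)$ is nonempty and compact-valued by Proposition \ref{prop: proxmapping_and_jacobian}(a); a continuous (affine) image of a compact set is compact. Upper semicontinuity is the composition of an upper semicontinuous multifunction with continuous maps: since $u \mapsto x^k + \sigma_k c - \sigma_k A^T u$ is continuous and $\partial_{\rm HS}{\rm Prox}_{\sigma_k \lambda p}$ is upper semicontinuous, the composition is upper semicontinuous, and adding the continuous single-valued term $-\frac{\sigma_k}{\tau}\nabla {\rm Prox}_{\sigma_k h/\tau}(Ax^k + \frac{\sigma_k}{\tau}u)$ (continuity follows because $\nabla^2 h$ is continuous and ${\rm Prox}_{\sigma_k h/\tau}$ is Lipschitz) preserves this.

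For property (2), symmetry is immediate since both $\nabla {\rm Prox}_{\sigma_k h/\tau}(\cdot)$ (a symmetric inverse by Proposition \ref{prop: jacobian_h}) and every $P\in\partial_{\rm HS}{\rm Prox}_{\sigma_k \lambda p}(\cdot)$ (Proposition \ref{prop: proxmapping_and_jacobian}(b)) are symmetric, and $A P A^T$ stays symmetric. For definiteness I would use that $\nabla {\rm Prox}_{\sigma_k h/\tau}(z) \succ 0$ by Proposition \ref{prop: jacobian_h}, so $-\frac{\sigma_k}{\tau}\nabla {\rm Prox}_{\sigma_k h/\tau}(\cdot) \prec 0$; meanwhile $P \succeq 0$ gives $A P A^T \succeq 0$, so $-\sigma_k A P A^T \preceq 0$. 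The sum of a negative definite matrix and a negative semidefinite matrix is negative definite. Property (3) — strong semismoothness of $\nabla \psi_k$ with respect to $\hat{\partial}^2\psi_k$ — follows from the chain rule for (strongly) semismooth functions: $\nabla {\rm Prox}_{\sigma_k h/\tau}$ is smooth hence strongly semismooth (one must note $h$ twice continuously differentiable and, more precisely, that $\nabla^2 h$ is locally Lipschitz so that $\nabla{\rm Prox}_{\sigma_k h/\tau}$ is locally Lipschitz and strongly semismooth — this is the one spot requiring a small argument, see Proposition \ref{prop: jacobian_h}), ${\rm Prox}_{\sigma_k \lambda p}$ is strongly semismooth with respect to $\partial_{\rm HS}{\rm Prox}_{\sigma_k \lambda p}$ by Proposition \ref{prop: proxmapping_and_jacobian}(c), and composition with the affine maps $a \mapsto Ax^k + \frac{\sigma_k}{\tau}a$ etc.\ preserves strong semismoothness; summing two strongly semismooth maps keeps the property, and the associated generalized Jacobian of the sum/composition is exactly the set $\hat{\partial}^2\psi_k(u)$ built from the component pieces.

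The main obstacle I anticipate is purely bookkeeping: one must be careful that the "generalized Jacobian" set $\hat{\partial}^2\psi_k(u)$ defined by \eqref{eq: jacobian} is genuinely compatible with the chain-rule/sum-rule for strong semismoothness — i.e.\ that it contains (or equals) the set that arises naturally from composing the component strongly semismooth maps, rather than merely being an upper-semicontinuous multifunction with the right linear-growth defect bound. Since Proposition \ref{prop: jacobian_h} gives the smooth term's derivative exactly and Proposition \ref{prop: proxmapping_and_jacobian}(c) states strong semismoothness of ${\rm Prox}_{\sigma_k\lambda p}$ with respect to the explicitly given $\partial_{\rm HS}$, the verification reduces to invoking the standard calculus rules for strongly semismooth functions (as in the cited references on semismoothness), and no genuinely new estimate is needed.
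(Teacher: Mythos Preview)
Your proposal is correct and follows exactly the approach the paper takes: the paper's ``proof'' is simply the one-line remark that the proposition follows from Propositions~\ref{prop: jacobian_h} and~\ref{prop: proxmapping_and_jacobian}, and your write-up is a careful unpacking of precisely that deduction. Your flagged caveat about needing $\nabla^2 h$ locally Lipschitz for strong semismoothness of the smooth term is a fair observation that the paper glosses over (it holds automatically for the linear and logistic losses used throughout).
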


Now we present the SSN method for solving \eqref{eq:ATA_D} in Algorithm \ref{alg:ssn}.
\begin{algorithm}[H]\small
	\caption{A semismooth Newton method for \eqref{eq:ATA_D}}
	\label{alg:ssn}
	\begin{algorithmic}[1]
		\STATE \textbf{Input}: $\mu \in (0, 1/2)$, $\bar{\tau} \in (0, 1]$, and $\bar{\gamma}, \delta \in (0, 1)$.
		\STATE \textbf{Output}: an approximate optimal solution $u^{k+1}$ to \eqref{eq:ATA_D}.
		\STATE \textbf{Initialization}: choose $u^{k,0} \in  \mathbb{R}^m$, $j=0$.
		\REPEAT
		\STATE {\bfseries Step 1}. Select an element ${\cal H}_j \in \hat{\partial}^{2} \psi_k(u^{k,j})$. Apply the direct method or the conjugate gradient (CG) method to find an approximate solution $d^j \in \mathbb{R}^m$ to
		\begin{align}\label{eq: cg-system}
			{\cal H}_j(d^j) \approx - \nabla \psi_k(u^{k,j}),
		\end{align}
		such that $\|{\cal H}_j(d^j) + \nabla\psi_k(u^{k,j})\| \leq \min(\bar{\gamma}, \|\nabla\psi(u^{k,j})\|^{1+\bar{\tau}})$.
		\\[3pt]
		\STATE {\bfseries Step 2}. Set $\alpha_j = \delta^{m_j}$, where $m_j$ is the smallest nonnegative integer $m$ for which
		$$\psi_k(u^{k,j} + \delta^m d^j) \geq \psi_k(u^{k,j}) + \mu\delta^m \langle \nabla\psi_k(u^{k,j}), d^j \rangle .$$
		\\[3pt]
		\STATE{\bfseries Step 3}. Set $u^{k,j+1} = u^{k,j} + \alpha_j d^j$, $u^{k+1}=u^{k,j+1}$, $j\leftarrow j+1$.
		\UNTIL{Stopping criterion based on $u^{k+1}$ is satisfied.}
	\end{algorithmic}
\end{algorithm}

In practice, one can choose the parameters in Algorithm \ref{alg:ssn} as $\mu = 10^{-4}$, $\bar{\tau}=0.5$, $\bar{\gamma} = 0.005$ and $\delta = 0.5$. The following theorem gives the convergence result of the SSN method, which can be proved by using Proposition \ref{prop: jacobian_psi} and the results in \citep[Proposition 3.3 and Theorem 3.4]{zhao2010newton}, \citep[Theorem 3]{li2018efficiently}. For simplicity, we omit the proof here.
\begin{theorem} \label{thm:convergence_SSN}
	Let $\{u^{k,j}\}$ be the sequence generated by Algorithm \ref{alg:ssn}. Then $\{u^{k,j}\}$ converges to the unique optimal solution $\bar{u}^{k+1}$ of the problem \eqref{eq:ATA_D}, and for $j$ sufficiently large,
	\begin{align*}
		\|u^{k,j+1} - \bar{u}^{k+1}\| = O(\|u^{k,j} - \bar{u}^{k+1}\|^{1+\bar{\tau}}),
	\end{align*}
	where $\bar{\tau} \in (0, 1]$ is given in the algorithm.
\end{theorem}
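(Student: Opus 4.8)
The plan is to read Theorem \ref{thm:convergence_SSN} as an instance of the standard convergence theory for a globalized inexact semismooth Newton method, so that the proof reduces to checking the structural hypotheses of the abstract theorems in \citep{zhao2010newton}, in the ``semismoothness with respect to a multifunction'' formulation of \citep{li2018efficiently} that is needed because $\hat\partial^2\psi_k$ is built from the HS-Jacobian surrogate $\partial_{\rm HS}{\rm Prox}_{\sigma_k\lambda p}$ rather than from the Clarke generalized Jacobian. First I would settle well-posedness. By Proposition \ref{prop: jacobian_h}, $\psi_k$ is strictly concave and continuously differentiable; moreover it is coercive, since $h$ is finite-valued on $\mathbb{R}^m$ (hence $h^*$ is supercoercive), so the Moreau-envelope term in \eqref{eq:ATA_D} dominates the accompanying quadratic as $\|u\|\to\infty$. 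Therefore $\psi_k$ has bounded upper level sets and a unique maximizer $\bar u^{k+1}$, which is exactly the point characterized by $\nabla\psi_k(u)=0$ in \eqref{eq: newton-system}.

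The second step is to verify that Algorithm \ref{alg:ssn} is a genuine ascent method with a well-defined line search. By Proposition \ref{prop: jacobian_psi}(2) every selected ${\cal H}_j\in\hat\partial^2\psi_k(u^{k,j})$ is symmetric and negative definite; together with the nonemptiness, compact-valuedness and upper-semicontinuity in Proposition \ref{prop: jacobian_psi}(1), this bounds the spectra of these operators away from $0$ and from $\infty$ uniformly over the bounded level set in which the iterates lie. Hence the inexact Newton direction $d^j$ of \eqref{eq: cg-system}, whose residual is forced to be no larger than $\|\nabla\psi_k(u^{k,j})\|^{1+\bar\tau}$ (in particular, to vanish faster than linearly in $\|\nabla\psi_k(u^{k,j})\|$ near a stationary point), is a direction of sufficient ascent, $\langle\nabla\psi_k(u^{k,j}),d^j\rangle\ge c\,\|\nabla\psi_k(u^{k,j})\|^2$ for a uniform $c>0$; this is the situation of \citep[Proposition 3.3]{zhao2010newton}, so the Armijo step in Step 2 terminates after finitely many backtrackings, the values $\{\psi_k(u^{k,j})\}$ increase, the iterates stay bounded, and every accumulation point is stationary for $\psi_k$. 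Strict concavity then identifies that unique accumulation point with $\bar u^{k+1}$, and boundedness upgrades this to $u^{k,j}\to\bar u^{k+1}$. For the asymptotic rate I would invoke \citep[Theorem 3.4]{zhao2010newton} and \citep[Theorem 3]{li2018efficiently}: their remaining hypotheses are precisely the nonsingularity of the elements of $\hat\partial^2\psi_k$ in a neighbourhood of $\bar u^{k+1}$ (from negative definiteness plus upper-semicontinuity) and the strong semismoothness of $\nabla\psi_k$ with respect to $\hat\partial^2\psi_k$ (Proposition \ref{prop: jacobian_psi}(3)). Once $u^{k,j}$ is close enough to $\bar u^{k+1}$, the unit step $\alpha_j=1$ is accepted, and the inexact semismooth Newton update whose residual is controlled by $\|\nabla\psi_k(u^{k,j})\|^{1+\bar\tau}$, itself $O(\|u^{k,j}-\bar u^{k+1}\|^{1+\bar\tau})$ by the Lipschitz continuity of $\nabla\psi_k$, yields $\|u^{k,j+1}-\bar u^{k+1}\|=O(\|u^{k,j}-\bar u^{k+1}\|^{1+\bar\tau})$; the limiting exponent $1+\bar\tau\le 2$ comes from the prescribed inexactness, while strong semismoothness alone would permit a quadratic bound.

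The part I expect to be the main obstacle is not any individual estimate but the legitimacy of running the whole machinery with the surrogate HS-Jacobian: the classical semismooth Newton theory is stated for the Clarke generalized Jacobian, whereas here it must be the multifunction-based variant of \citep{li2018efficiently}, with $\hat\partial^2\psi_k$ playing the role of a generalized Hessian. Everything nontrivial for that substitution has, however, already been packaged into Proposition \ref{prop: jacobian_psi}(3), whose proof rests---via the chain rule through the everywhere-differentiable map ${\rm Prox}_{\sigma_k h/\tau}$ of Proposition \ref{prop: jacobian_h}---on the strong semismoothness of ${\rm Prox}_{\rho\|w\circ\cdot\|_1^2}$ with respect to $\partial_{\rm HS}{\rm Prox}_{\rho\|w\circ\cdot\|_1^2}$ established in Proposition \ref{prop: property_HS_prox}. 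Granting those facts, the remaining work is the routine bookkeeping of matching each hypothesis of the cited abstract theorems, which is why the paper omits the details.
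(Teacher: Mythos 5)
Your proposal is correct and follows exactly the route the paper intends: the paper omits the proof and simply points to Proposition \ref{prop: jacobian_psi} together with \citep[Proposition 3.3 and Theorem 3.4]{zhao2010newton} and \citep[Theorem 3]{li2018efficiently}, and your argument is precisely the verification of the hypotheses of those cited results (well-posedness and coercivity, uniform negative definiteness of the generalized Hessians over the relevant level set, strong semismoothness with respect to the HS-Jacobian-based multifunction, and the inexactness control giving the $1+\bar{\tau}$ rate). No discrepancies worth flagging beyond the slightly loose phrasing of which term dominates in the coercivity estimate; the conclusion there is still right since $\tfrac{1}{2}\|\cdot\|^2-{\rm E}_{\sigma_k h/\tau}(\cdot)$ is the supercoercive conjugate of a finite-valued strongly convex function.
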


We should emphasize that the efficiency of computing the Newton direction in \eqref{eq: cg-system} depends critically on exploiting the sparsity structure of the generalized Jacobian. The practical implementation details are presented in the next subsection.

\subsection{Practical implementation of the SSN method}\label{sec: implement_SSN}
In the SSN method presented in Algorithm \ref{alg:ssn}, the key step is to compute the Newton direction, in other words, to solve the linear system \eqref{eq: cg-system}. In our implementation, we fully exploit the structured sparsity of the generalized Jacobian  which results in a highly efficient way to solve the linear system.

Denote $\tilde{A}:=A {\cal P}^T$. By the definition of the permutation matrix ${\cal P}$, $\tilde{A}$ can be obtained by permuting the columns in $A$ according to ${\cal P}$. Note that $\tilde{A}$ only needs to be computed once as a preprocessing step of the PPDNA algorithm since $\cal P$ is fully determined by the fixed group information $\cal G$. Given $(\tilde{x},\tilde{u})\in\mathbb{R}^n\times \mathbb{R}^m$ and $\sigma,\tau>0$, the Newton system \eqref{eq: cg-system} is in the form:
\begin{align}
\left( \frac{\sigma}{\tau}H  + \sigma \tilde{A} {\rm Diag}(M_1,\cdots,M_l)\tilde{A}^T\right) d = R,\label{eq: appendix_org}
\end{align}
where $R\in \mathbb{R}^m$ is a given vector, $H\in \nabla {\rm Prox}_{\sigma h/\tau}(A\tilde{x}+\frac{\sigma}{\tau}\tilde{u})$, $M_j\in \partial_{\rm HS} {\rm Prox}_{\sigma\lambda\|({\cal P} w)^{(j)}\circ\cdot\|_1^2}(({\cal P} \hat{x})^{(j)})$, $j=1,\cdots,l$, with $\hat{x}:=\tilde{x} + \sigma c - \sigma A^T\tilde{u}$. As shown in Proposition \ref{prop: jacobian_h}, $H$ is symmetric and positive definite. We denote the Cholesky decomposition of $H$ as $H=LL^T$, where $L$ is a nonsingular lower triangular matrix. Then we can reformulate the equation \eqref{eq: appendix_org} equivalently as
\begin{align*}
\left( \frac{\sigma}{\tau} I_m  + \sigma (L^{-1}\tilde{A})  {\rm Diag}(M_1,\cdots,M_l) (L^{-1}\tilde{A})^T\right) (L^T d) = L^{-1}R.
\end{align*}
Note that when we consider the linear regression or the logistic regression problems, the matrix $H$ is in fact a diagonal matrix, which means that we can compute $L$ and $L^{-1}$ with very low computational cost. For convenience, we write the linear system in a compact form as
\begin{align}
\left( I_m + \tau \hat{A} {\cal M} \hat{A}^T\right) \hat{d} = \hat{R},\label{eq: appendix_newton}
\end{align}
where $\hat{A}:= L^{-1}\tilde{A} \in \mathbb{R}^{m\times n}$, ${\cal M}:={\rm Diag}(M_1,\cdots,M_l)\in \mathbb{R}^{n\times n}$, $\hat{d} := L^T d \in \mathbb{R}^m$ and $\hat{R} := \frac{\tau}{\sigma}L^{-1}R \in \mathbb{R}^m$. Since $L^T$ is an upper triangular matrix, we can recover $d$ from $\hat{d}$ with the cost of $O(m^2)$. In the case of linear regression or the logistic regression problems, the cost of recovering $d$ from $\hat{d}$ is actually $O(m)$. Thus, we only need to focus on solving the linear system \eqref{eq: appendix_newton} for $\hat{d}$.

Based on the discussions in Proposition \ref{compute_M}, for each $j \in \{1,\cdots,l\}$, we can choose $M_j\in \partial_{\rm HS} {\rm Prox}_{\sigma\lambda\|({\cal P} w)^{(j)}\circ\cdot\|_1^2}(({\cal P} \hat{x})^{(j)})$ such that it has the following form:
\begin{align*}
M_j = {\rm Diag}(\xi_j)-\frac{2\sigma\lambda}{1+2\sigma\lambda (\tilde{w}_j^T\tilde{w}_j)} \tilde{w}_j\tilde{w}_j^T,
\end{align*}
where $\xi_j \in \mathbb{R}^{n_j-n_{j-1}}$ is a $0$-$1$ vector defined as $(\xi_j)_i = 0$ if $i\in I(|({\cal P} \hat{x})^{(j)}|)$, $(\xi_j)_i = 1$ otherwise, and $\tilde{w}_j=({\rm sign}(({\cal P} \hat{x})^{(j)})\circ \xi_j)\circ({\cal P} w)^{(j)}$, where $I(\cdot)$ is defined in \eqref{eq:Ia}.

We know that the costs of directly computing $\hat{A} {\cal M} \hat{A}^T$ and $\hat{A} {\cal M} \hat{A}^T\bar{d}$ for a given vector $\bar{d}\in\mathbb{R}^m$ are $O(m^2n)$ and $O(mn)$, respectively. This is computationally expensive when $m$ and $n$ are large. Next we will carefully explore the second-order sparsity of the underlying Jacobian which will substantially reduce the computational cost for solving the linear system \eqref{eq: appendix_newton}.

For each $j \in \{1,\cdots,l\}$, by taking advantage of the $0$-$1$ structure of $\xi_j$ and the definition of $\tilde{w}_j$, we have
\begin{align*}
M_j = {\rm Diag}(\xi_j)\left({\rm Diag}(\xi_j)-\frac{2\sigma\lambda}{1+2\sigma\lambda (\tilde{w}_j^T\tilde{w}_j)} \tilde{w}_j\tilde{w}_j^T\right){\rm Diag}(\xi_j)={\rm Diag}(\xi_j)M_j{\rm Diag}(\xi_j).
\end{align*}
Define $K_j:=\{k\mid (\xi_j)_k=1,k=1,\cdots,n_j-n_{j-1}\}$, $\xi:=[\xi_1;\cdots;\xi_l]\in \mathbb{R}^n$ and ${\cal K}:=\{k\mid (\xi)_k=1,k=1,\cdots,n\}$. It holds that
\begin{align}
\hat{A} {\cal M} \hat{A}^T = \hat{A}{\rm Diag}(\xi){\rm Diag}(M_1,\cdots,M_l){\rm Diag}(\xi) \hat{A}^T
=\hat{A}_{\cal K}{\rm Diag}(\hat{M}_1,\cdots,\hat{M}_l) \hat{A}_{\cal K}^T,\label{eq: appendix_AMAT}
\end{align}
where $\hat{A}_{\cal K}\in \mathbb{R}^{m\times |{\cal K}|}$ is the matrix consisting of the columns of $\hat{A}$ indexed by ${\cal K}$, and for each $j \in \{1,\cdots,l\}$, $\hat{M}_j\in \mathbb{R}^{| K_j| \times |K_j|}$ is defined as
\begin{align*}
\hat{M}_j = I_{|K_j|}-c_jv_jv_j^T,
\end{align*}
with $v_j:=(\tilde{w}_j)_{K_j}$, $c_j:=\frac{2\sigma\lambda}{1+2\sigma\lambda (\tilde{w}_j^T\tilde{w}_j)}$.

From the equation \eqref{eq: appendix_AMAT}, we can see that the costs of computing $\hat{A} {\cal M} \hat{A}^T$ and $\hat{A} {\cal M} \hat{A}^T \bar{d}$ for a given vector $\bar{d}\in \mathbb{R}^m$ reduce to $O(m^2|{\cal K}|)$ and $O(m|{\cal K}|)$, respectively. The reduction of the computation time is significant, since the sparsity of the solution induced by the exclusive lasso regularizer implies that $|{\cal K}|\ll n$. Note that when $m$ is moderate, we can use the Cholesky decomposition to solve the linear system \eqref{eq: appendix_newton} with the computational cost of $O(m^3+m^2|{\cal K}|)$. For the case when $|{\cal K}|\ll m$, we can use the Sherman-Morrison-Woodbury formula \citep{golub1996matrix} to further reduce the computational cost of solving \eqref{eq: appendix_newton}. To be specific, we have
\begin{align*}
\left(I_m+\hat{A}_{\cal K}{\rm Diag}(\hat{M}_1,\cdots,\hat{M}_l) \hat{A}_{\cal K}^T\right)^{-1}=I_m-\hat{A}_{\cal K}\left({\rm Diag}(\hat{M}_1^{-1},\cdots,\hat{M}_l^{-1})+\hat{A}_{\cal K}^T\hat{A}_{\cal K}
\right)^{-1}\hat{A}_{\cal K}^T,
\end{align*}
where for each $j \in \{1,\cdots,l\}$,
\begin{align*}
\hat{M}_j^{-1} = I_{|K_j|}+\left( c_j^{-1}-v_j^Tv_j\right)^{-1} v_jv_j^T.
\end{align*}
Now, the cost of solving \eqref{eq: appendix_newton} is reduced to $O(|{\cal K}|^3+|{\cal K}|^2m)$. For the case when $m$ and $|{\cal K}|$ are both large, we can employ the conjugate gradient (CG) method to solve \eqref{eq: appendix_newton}, where the computational cost of each iteration of CG method is $O(m|{\cal K}|)$.

As one can see, in our implementation, we fully take advantage of the sparsity of the solution and the structure of the underlying Jacobian to highly reduce the computational cost of solving the Newton system \eqref{eq: cg-system}, which makes our SSN method efficient and robust for large-scale problems.

\section{Numerical experiments}
\label{sec:numerical}
In this section, we perform numerical experiments to evaluate the performance of our proposed PPDNA for solving exclusive lasso models from two aspects:
\begin{itemize}[noitemsep,topsep=0pt]
	\item[(1)] We compare our proposed PPDNA for solving exclusive lasso models with other four popular algorithms. The numerical results show that the PPDNA outperms other algorithms for solving the exclusive lasso model by a large margin.
	\item[(2)] We apply the exclusive lasso models to some real application problems, including the index ETF in finance, and image and text classifications.
\end{itemize}
All our computational results are obtained by running {\sc Matlab} on a windows workstation (Intel(R) Core(TM) i7-8700 CPU @ 3.20GHz, 64G RAM).

We terminate the tested algorithms when $\eta_{\rm KKT}\leq \varepsilon$, where $\varepsilon > 0$ is a given tolerance, which is set to be $10^{-6}$ by default. To measure the accuracy of the obtained solution by each algorithm, we use the following relative KKT residual:
\begin{align*}
	\eta_{\rm KKT} := \frac{\|x - {\rm Prox}_{\lambda p}(x - A^T\nabla h(Ax)\|}{1 + \|x\| + \|A^T\nabla h(Ax)\|}.
\end{align*}

\subsection{Performance of the PPDNA for solving exclusive lasso problems}
\label{sec: numerical PPDNA}
In this subsection, we compare the proposed PPDNA for solving exclusive lasso models with a given $\lambda>0$ to four popular first-order algorithms: ILSA  \citep{kong2014exclusive}, ADMM with the step length $\kappa = 1.618$ \citep{fazel2013hankel}, APG with restart under the setting described in \citep{becker2011templates} and the coordinate descent algorithm in the R package `ExclusiveLasso'\footnote{https://github.com/DataSlingers/ExclusiveLasso}, which is widely used in the statistics community. In the experiments, we also terminate PPDNA when it reaches the maximum iteration  of $200$, and terminate ILSA, ADMM and APG when they reach the maximum iteration  of $200,000$. We reset the maximum iteration of the algorithm in the R package `ExclusiveLasso' to $50,000$ to try to obtain a solution with a relatively high accuracy, and keep all the other settings as default in the solver. Here, we also add one command in this R solver to let it return the solution of the last iteration if the maximum iteration has been reached. In addition, we set the maximum computation time of each experiment as one hour. To demonstrate the efficiency and scalability of the algorithms, we perform the time comparison on synthetic datasets over a range of scales.

For simplicity, we first take the weight vector $w$ to be all ones, and the vector $c$ to be zero. The exclusive lasso model can be described as
\begin{equation}\label{eq: exclusive-lasso}
	\min_{x\in \mathbb{R}^n} \ \Big\{h(Ax) + \lambda \sum_{j=1}^l \| x_{g_j}\|_1^2\Big\}.
\end{equation}

\subsubsection{The regularized linear regression problem with synthetic data}\label{sec: exper_linear}
 In the model \eqref{eq: exclusive-lasso}, we take $h(y):=\sum_{i=1}^m (y_i-b_i)^2/2$, where $b\in \mathbb{R}^m$ is given. Motivated by \citep{campbell2017within}, we generate the synthetic data using the model $b = Ax ^* +\epsilon$, where $x ^*$ is the predefined true solution and $\epsilon \sim \mathcal{N}(0, I_m)$ is a random noise vector. Given the number of observations $m$, the number of groups $l$ and the number of features $p$ in each group, we generate each row of the matrix $A \in \mathbb{R}^{m \times lp}$ by independently sampling a vector from a multivariate normal distribution $\mathcal{N}(0, \Sigma)$, where $\Sigma$ is a Toeplitz covariance matrix with entries $\Sigma_{ij} = 0.9^{|i - j|}$ for features in the same group, and $\Sigma_{ij} = 0.3^{|i - j|}$ for features in different groups. For the ground-truth $x^{*}$, we randomly generate $10$ nonzero elements in each group with i.i.d values drawn from the uniform distribution on $[0,10]$.

We mainly focus on solving the exclusive lasso model in the high-dimensional settings. Hence, we fix $m$ to be $200$ and $l$ to be $20$, but vary the number of features $p$ in each group from $50$ to $1000$. That is, we vary the total number of features $n=lp$ from $1000$ to $20000$. To compare the robustness of different algorithms with respect to the hyper-parameter $\lambda$, we test all the algorithms under three different values of $\lambda$. The time comparison for $\varepsilon = 10^{-6}$ is shown in Figure \ref{fig: time-comparison_2}, which demonstrates the superior performance of the PPDNA, especially for large-scale instances, comparing to ILSA, ADMM, APG and CD. As one can observe, for the largest instance with $\lambda = 0.1$ or $0.001$, PPDNA is at least one hundred times faster than ADMM, which is the best performing first-order method. We find that the CD scheme implemented in the R solver can not solve the exclusive lasso problems with a small $\lambda$ to a high accuracy within the given maximum number of iterations. For a better illustration, we also report the time comparison of the five algorithms for a moderate accuracy with $\varepsilon = 10^{-4}$ in Figure \ref{fig: time-comparison}. From Figure \ref{fig: time-comparison}, we can see that for large $\lambda=10$, PPDNA performs the best, ADMM, APG and CD also give satisfactory performance, while ILSA gives the worst performance. For median $\lambda=0.1$, PPDNA takes seconds to solve each problem, while, ADMM,  the best performing algorithm in the remaining methods,   needs about tens of seconds. For small $\lambda=0.001$, PPDNA still only needs seconds to solve each instance. However, ILSA and ADMM each takes up to 100 seconds, and APG and CD are even unable to solve many instances. These experiments show that PPDNA outperforms the existing solvers for the exclusive lasso problems under both moderate and high accuracy scenarios.

\begin{figure}[H]
	\begin{center}
		\includegraphics[width = 1\columnwidth]{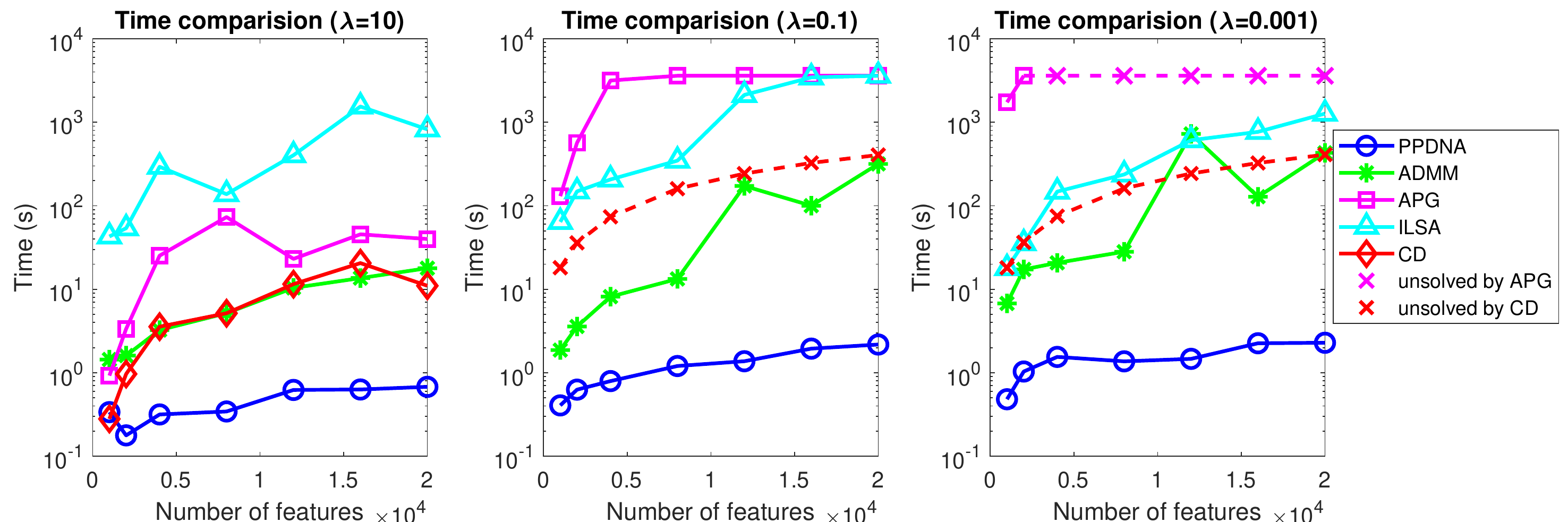}
		\setlength{\abovecaptionskip}{0pt}
		\setlength{\belowcaptionskip}{-15pt}
		\caption{Time comparison among PPDNA, ILSA, ADMM, APG and CD for linear regression with an exclusive lasso regularizer on synthetic datasets (with stopping criterion $\eta_{\rm KKT}\leq 10^{-6}$). }
		\label{fig: time-comparison_2}
	\end{center}
\end{figure}

\vspace{-0.75cm}

\begin{figure}[H]
	\begin{center}
		\includegraphics[width = 1\columnwidth]{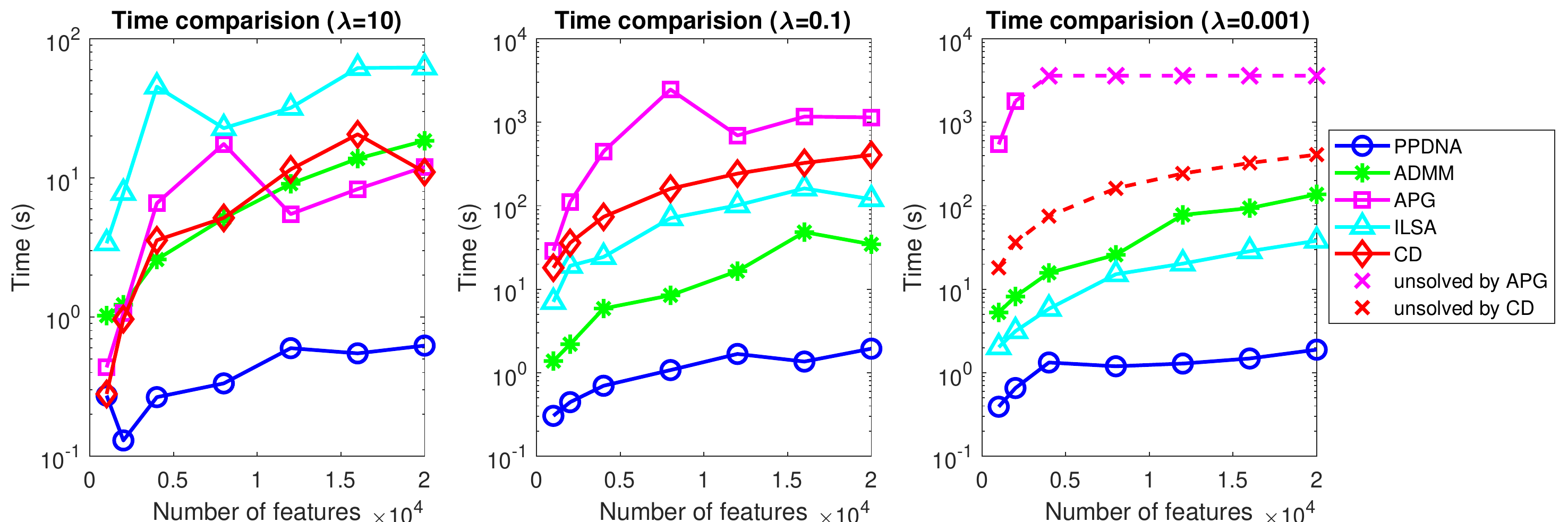}
		\setlength{\abovecaptionskip}{0pt}
		\setlength{\belowcaptionskip}{-15pt}
		\caption{Same setting as Figure \ref{fig: time-comparison_2} with stopping criterion $\eta_{\rm KKT}\leq 10^{-4}$).}
		\label{fig: time-comparison}
	\end{center}
\end{figure}

More numerical results on higher-dimensional cases (larger $m$, larger $l$ and/or larger $p$) with the stopping criterion $\eta_{\rm KKT}\leq 10^{-6}$ are shown in Table \ref{tab: ls}. For testing purposes, the regularization parameter $\lambda$ in the problem \eqref{eq: exclusive-lasso} is chosen as $\lambda = \lambda_b \|A^T b\|_{\infty}$, where $0<\lambda_b<1$. As one can see from Figures \ref{fig: time-comparison_2} and \ref{fig: time-comparison}, APG, ILSA and CD are not efficient enough to solve large-scale instances. Thus we only compare PPDNA with ADMM in these higher-dimensional cases. Here, we set the maximum iteration number of ADMM as $500,000$. For the instances in Table \ref{tab: ls}, PPDNA is able to solve most of the problems within one minute, whereas ADMM takes dozens of times longer.

\begin{table}[H]
	\tbl{Comparison between PPDNA and ADMM for linear regression with an exclusive lasso regularizer on synthetic datasets. In the table, `13(48)' means `PPA iterations (total inner SSN iterations)'. Time is in the format of (hours:minutes:seconds). $\sigma_{\max}(\cdot)$ means the largest singular value and ${\rm cond}(\cdot)$ denotes the condition number of a matrix, which is the ratio of the largest singular value to the smallest singular value.}
	{\begin{tabular}{ccccc} \toprule
			& & iter &  $\eta_{\rm KKT}$ & time\\ \cmidrule{3-5}
			Data $(m,l,p)$  & $\lambda_b$  & PPDNA $|$ ADMM & PPDNA $|$ ADMM & PPDNA $|$ ADMM \\ \midrule
			\multirow{2}*{\tabincell{c}{$(500,20,2000)$ \\
			$\sigma_{\max}(A) = 272, {\rm cond}(A) = 1.9582$ } }
			&1e-3 & 13(48) $|$ 3278  & 4.7e-7 $|$ 9.9e-7 & 0:00:02 $|$ 0:01:05  \\
			&1e-5 & 23(103) $|$ 14459  & 4.2e-7 $|$ 1.0e-6  & 0:00:05 $|$ 0:06:11  \\
			\midrule
			\multirow{2}*{\tabincell{c}{$(500,20,3000)$ \\
			$\sigma_{\max}(A) = 317, {\rm cond}(A) = 1.7395$ } }
			&1e-3 & 13(47) $|$ 3777  & 9.2e-7 $|$ 1.0e-6 & 0:00:03 $|$ 0:01:52  \\
			&1e-5 & 23(101) $|$ 45018  & 3.9e-7 $|$ 1.0e-6 & 0:00:06 $|$ 0:28:25 \\
			\midrule
			\multirow{2}*{\tabincell{c}{$(1000,20,2000)$ \\
			$\sigma_{\max}(A) = 304, {\rm cond}(A) = 2.5902$ } }
			&1e-3 & 11(50) $|$ 3302  & 1.4e-7 $|$ 9.9e-7 & 0:00:04 $|$ 0:01:57  \\
			&1e-5 & 21(120) $|$ 12986  & 1.9e-7 $|$ 1.0e-6 & 0:00:12 $|$ 0:09:47 \\
			\midrule
			\multirow{2}*{\tabincell{c}{$(1000,20,4000)$ \\
			$\sigma_{\max}(A) = 386, {\rm cond}(A) = 1.9742$ } }
			&1e-3 & 11(45) $|$ 3254  & 8.5e-7 $|$ 9.9e-7 & 0:00:05 $|$ 0:03:59  \\
			&1e-5 & 22(123) $|$ 27820  & 3.0e-7 $|$ 1.0e-6 & 0:00:17 $|$ 0:40:43 \\
			\midrule
			\multirow{2}*{\tabincell{c}{$(5000,20,1000)$ \\
			$\sigma_{\max}(A) = 401, {\rm cond}(A) = 14.1741$ } }
			&1e-3 & 7(43) $|$ 3576  & 2.7e-7 $|$ 9.8e-7 & 0:00:04 $|$ 0:06:25  \\
			&1e-5 & 12(165) $|$ 3456  & 1.3e-7 $|$ 9.9e-7 & 0:00:50 $|$ 0:06:47 \\
			\midrule
			\multirow{2}*{\tabincell{c}{$(5000,50,1000)$ \\
			$\sigma_{\max}(A) = 474, {\rm cond}(A) = 6.158$ } }
			&1e-3 & 8(54) $|$ 3664  & 8.4e-8 $|$ 9.9e-7 & 0:00:14 $|$ 0:14:25  \\
			&1e-5 & 14(181) $|$ 3562  & 2.2e-7 $|$ 9.9e-7 & 0:02:25 $|$ 0:14:50 \\
			\bottomrule
	\end{tabular}}
	\label{tab: ls}
\end{table}

\subsubsection{The regularized logistic regression problem with synthetic data}  To test the regularized logistic regression problem, we take $h(y) = \sum_{i=1}^m \log(1 + \exp(-b_i y_i))$ in \eqref{eq: exclusive-lasso}, where $b\in \{-1,1\}^m$ is given. We use the same synthetic datasets described in the previous part, except for letting $b_i=1$ if $ Ax ^* +\varepsilon\geq 0$, and $-1$ otherwise, where $\varepsilon \sim \mathcal{N}(0, I_m)$ is a random noise vector. As one can see in the previous experiments, APG, ILSA and CD are very time-consuming when solving large-scale exclusive lasso problems compared to PPDNA and ADMM. Thus for logistic regression problems, we only compare PPDNA with ADMM. The numerical results are shown in Table \ref{tab: logistic}, where the regularization parameter $\lambda$ in the exclusive lasso problem \eqref{eq: exclusive-lasso} is
chosen as $\lambda = \lambda_b \|A^T b\|_{\infty}$. Again, we can observe the superior performance of PPDNA against ADMM, and the performance gap is especially wide when the parameter $\lambda_b=10^{-5}$. For example, PPDNA is at least $160$ times faster than ADMM in solving the instance $(500,20,5000)$ with $\lambda_b=10^{-5}$.

\begin{table}[H]
	\tbl{Time comparison between PPDNA and ADMM for logistic regression with an exclusive lasso regularizer on synthetic datasets. A value in bold means that the algorithm fails to solve the instance to the required accuracy.}
	{\begin{tabular}{ccccc} \toprule
			& & iter &  $\eta_{\rm KKT}$ & time\\ \cmidrule{3-5}
			Data $(m,l,p)$  & $\lambda_b$  & PPDNA $|$ ADMM & PPDNA $|$ ADMM & PPDNA $|$ ADMM \\ \midrule
			\multirow{2}*{\tabincell{c}{$(500,20,5000)$\\
		    $\sigma_{\max}(A) = 386, {\rm cond}(A) = 1.5352$  } }
			&1e-3 & 16(43) $|$ 2252  & 3.6e-7 $|$ 1.0e-6 & 0:00:05 $|$ 0:08:14 \\
			&1e-5 & 46(54) $|$ 6249  & 8.3e-7 $|$ 1.0e-6 & 0:00:06 $|$ 0:16:31 \\
			\midrule
			\multirow{2}*{\tabincell{c}{$(1000,20,8000)$ \\
			$\sigma_{\max}(A) = 503, {\rm cond}(A) = 1.6212$  } }
			&1e-3 & 12(47) $|$ 1478  & 2.5e-7 $|$ 1.0e-6 & 0:00:12 $|$ 0:18:20 \\
			&1e-5 & 67(75) $|$ 6284  & 9.2e-7 $|$ 1.0e-6 & 0:00:21 $|$ 0:57:17 \\
			\midrule
			\multirow{2}*{\tabincell{c}{$(2000,20,10000)$ \\
			$\sigma_{\max}(A) = 595, {\rm cond}(A) = 1.8496$  } }
			&1e-3 & 10(54) $|$ 1685  & 1.9e-7 $|$ 9.9e-7 & 0:00:31 $|$ 0:57:59 \\
			&1e-5 & 45(64) $|$ 2255  & 5.5e-7 $|$ \bf{1.3e-4} & 0:00:47 $|$ 1:00:02 \\
			\midrule
			\multirow{2}*{\tabincell{c}{$(5000,20,1000)$ \\
			$\sigma_{\max}(A) = 401, {\rm cond}(A) = 14.1741$  } }
			&1e-3 & 9(85) $|$ 1178  & 2.6e-7 $|$ 1.0e-6 & 0:01:21 $|$ 0:52:44 \\
			&1e-5 & 13(84) $|$ 1523  & 3.2e-7 $|$ 1.0e-6 & 0:02:01 $|$ 0:48:00 \\
			\midrule
			\multirow{2}*{\tabincell{c}{$(5000,50,5000)$ \\
			$\sigma_{\max}(A) = 474, {\rm cond}(A) = 6.158$  } }
			&1e-3 & 9(68) $|$ 943  & 8.3e-8 $|$ 1.0e-6 & 0:01:52 $|$ 0:57:33 \\
			&1e-5 & 15(63) $|$ 1350  & 1.4e-7 $|$ \bf{2.4e-4} & 0:02:37 $|$ 1:00:01 \\
			\bottomrule
	\end{tabular}}
	\label{tab: logistic}
\end{table}

\subsubsection{Regularized exclusive lasso problems with non-uniform weights}
In order to better assess the robustness and efficiency of our proposed algorithm, we now move on to the case where the weight vector of the exclusive lasso problem is non-uniform. Specifically, we consider the weighted exclusive lasso regularized linear regression problem
\begin{equation}\label{eq: w-exclusive-lasso}
	\min_{x\in \mathbb{R}^n} \ \Big\{h(Ax) + \lambda \sum_{j=1}^l \| w_{g_j} \circ x_{g_j}\|_1^2\Big\},
\end{equation}
where $h(y):=\sum_{i=1}^m (y_i-b_i)^2/2$ and $w\in\mathbb{R}^n$ is a given weight vector. In the experiments, we generate each element of the weight vector uniformly random on $[0,1]$ and then follow the same procedure in Section \ref{sec: exper_linear} to generate the remaining data.

Since ADMM outperforms the algorithms APG, ILSA and CD for solving large-scale uniformly-weighted exclusive lasso problem, we focus on comparing PPDNA and ADMM on solving the problem \eqref{eq: w-exclusive-lasso}. Note that compared to the experiments in Section \ref{sec: exper_linear}, we pick two different choices of the parameter $\lambda_b$ to test the robustness of the algorithms as well as to get reasonable number of non-zero elements in the obtained solutions. Detailed numerical results are shown in Table \ref{tab: ls_weight}. We can see that the overall performance of the two algorithms on the uniformly weighted cases and non-uniformly weighted cases are quite similar. Specifically, PPDNA can solve most of the instances within twenty seconds, while ADMM takes much longer computational time.

\begin{table}[H]
	\tbl{Comparison between PPDNA and ADMM for linear regression with a weighted exclusive lasso regularizer on synthetic datasets.}
	{\begin{tabular}{ccccc} \toprule
		    & & iter &  $\eta_{\rm KKT}$ & time\\ \cmidrule{3-5}
			Data $(m,l,p)$  & $\lambda_b$  & PPDNA $|$ ADMM & PPDNA $|$ ADMM & PPDNA $|$ ADMM \\ \midrule
			\multirow{2}*{\tabincell{c}{$(500,20,2000)$  } }
			&1e-1 & 18(78) $|$ 7867  & 7.7e-7 $|$ 1.0e-6 & 0:00:04 $|$ 0:03:01  \\
			&1e-3 & 27(105) $|$ 27001  & 5.9e-7 $|$ 1.0e-6  & 0:00:06 $|$ 0:11:21  \\
			\midrule
			\multirow{2}*{\tabincell{c}{$(500,20,3000)$  } }
			&1e-1 & 21(92) $|$ 10995  & 6.2e-7 $|$ 1.0e-6 & 0:00:07 $|$ 0:06:39  \\
			&1e-3 & 29(108) $|$ 65933  & 2.2e-7 $|$ 1.0e-6 & 0:00:09 $|$ 0:43:05 \\
			\midrule
			\multirow{2}*{\tabincell{c}{$(1000,20,2000)$  } }
			&1e-1 & 15(64) $|$ 3358  & 2.5e-7 $|$ 1.0e-6 & 0:00:06 $|$ 0:01:46  \\
			&1e-3 & 25(120) $|$ 33802  & 4.7e-7 $|$ 1.0e-6 & 0:00:13 $|$ 0:22:50 \\
			\midrule
			\multirow{2}*{\tabincell{c}{$(1000,20,4000)$  } }
			&1e-1 & 19(93) $|$ 9278  & 2.2e-7 $|$ 1.0e-6 & 0:00:14 $|$ 0:11:06  \\
			&1e-3 & 26(125) $|$ 47640  & 8.1e-7 $|$ \bf{4.8e-6} & 0:00:20 $|$ 1:00:00 \\
			\midrule
			\multirow{2}*{\tabincell{c}{$(5000,20,1000)$  } }
			&1e-1 & 7(37) $|$ 3488  & 4.4e-8 $|$ 9.9e-7 & 0:00:05 $|$ 0:05:00  \\
			&1e-3 & 9(45) $|$ 3205  & 3.6e-7 $|$ 1.0e-6 & 0:00:16 $|$ 0:04:41 \\
			\midrule
			\multirow{2}*{\tabincell{c}{$(5000,50,1000)$  } }
			&1e-1 & 7(45) $|$ 3258  & 7.9e-7 $|$ 9.8e-7 & 0:00:18 $|$ 0:09:31  \\
			&1e-3 & 17(100) $|$ 3266  & 3.8e-8 $|$ 1.0e-6 & 0:03:26 $|$ 0:09:25 \\
			\bottomrule
	\end{tabular}}
	\label{tab: ls_weight}
\end{table}

\subsection{Real applications}\label{sec: real}
In this subsection, we apply the exclusive lasso model to some real application problems, including the index exchange traded fund (ETF) in finance, image and text classifications in multi-class classifications.

\subsubsection{Index exchange-traded fund}
Consider the portfolio selection problem where a fund manager wants to select a small subset of stocks to track the S\&P 500 index. In order to diversify the risks, the portfolio is required to span across all sectors. Such an application naturally leads us to consider the exclusive lasso model.

In our experiments, we download all the stock price data in the US market between 2018-01-01 and 2018-12-31 (251 trading days) from Yahoo finance\footnote{https://finance.yahoo.com}, and drop the stocks with more than 10\% of their price data being missed. We get 3074 stocks in our stock universe and handle the missing data via the common practice of forward interpolation. Then we denote the historical daily return matrix as $R \in \mathbb{R}^{250 \times 3074}$, and the daily return of the S\&P 500 index as $y \in \mathbb{R}^{250}$. Since there are 12 sectors in the US market (e.g., finance, healthcare), we have a natural group partition for our stock universe as $\mathcal{G} = \{g_1, g_2, \dots, g_{12}\}$, where $g_i$ is the index set for stocks in the $i$-th sector.

To test the performance of the exclusive lasso model in index tracking, we use the rolling window method \citep[Chapter 9]{zivot2006modeling} to test the in-sample and out-of-sample performance of the model. We use the historical data in the last 90 trading days to estimate a portfolio vector via the model for the future 10 days. In each experiment, we scale the feature matrix $A$ and the response vector $b$ by $1/\sqrt{\|A\|_F}$, and select the parameter $\lambda$ in the range of $10^{-3}$ to $10^{-5}$ with $20$ equally divided grid points on the $\log_{10}$ scale, using 9-fold cross-validation. The in-sample and out-of-sample performance of the exclusive lasso model, the lasso model and the group lasso model is shown in Figure \ref{fig: partial-index-tracking}. The out-of-sample performance of the exclusive lasso model is visibly better than those corresponding to the lasso and group lasso models.

\begin{figure}[H]
	\begin{center}
		\includegraphics[width = 0.4\columnwidth]{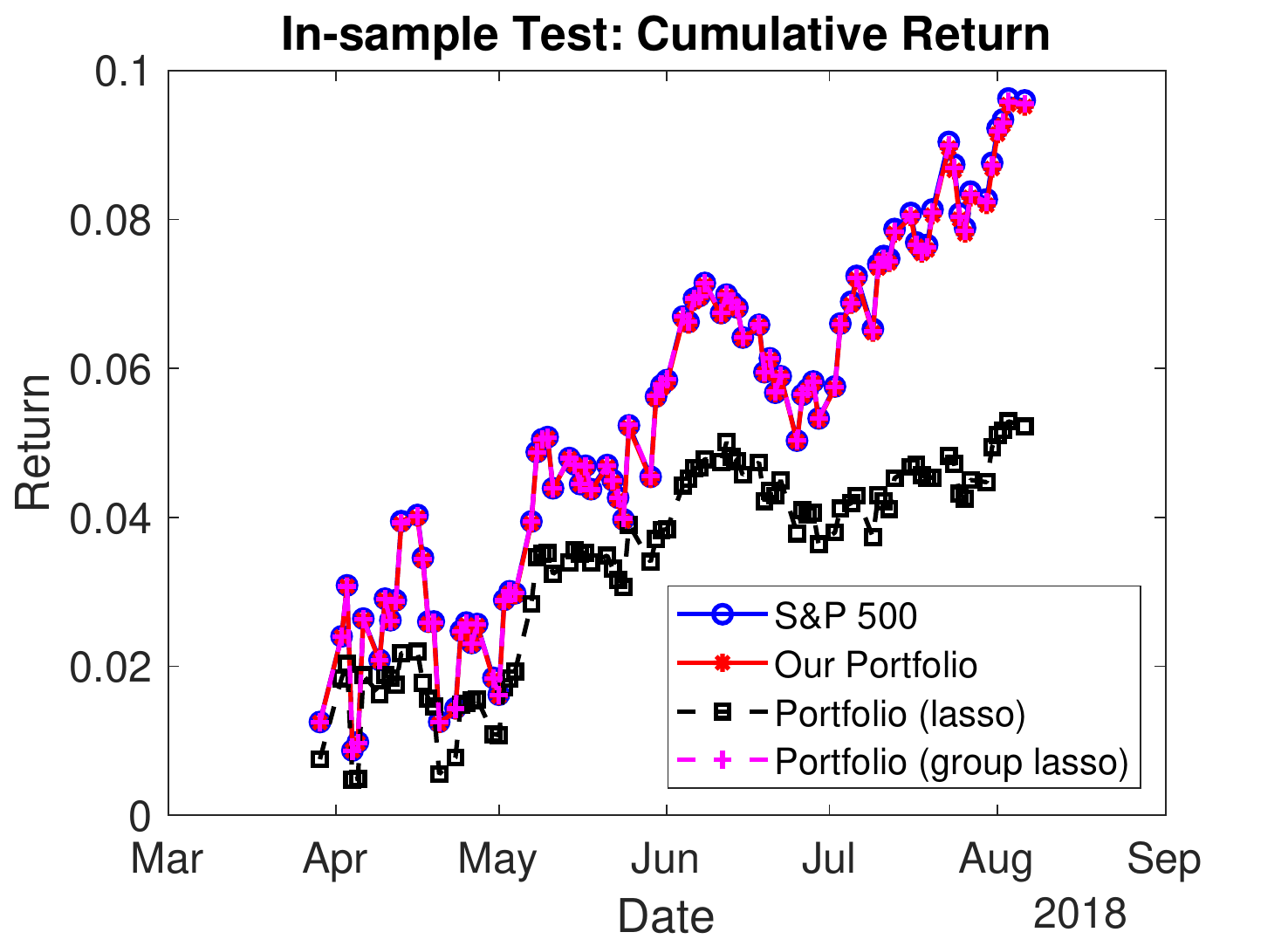}
		\quad
		\includegraphics[width = 0.4\columnwidth]{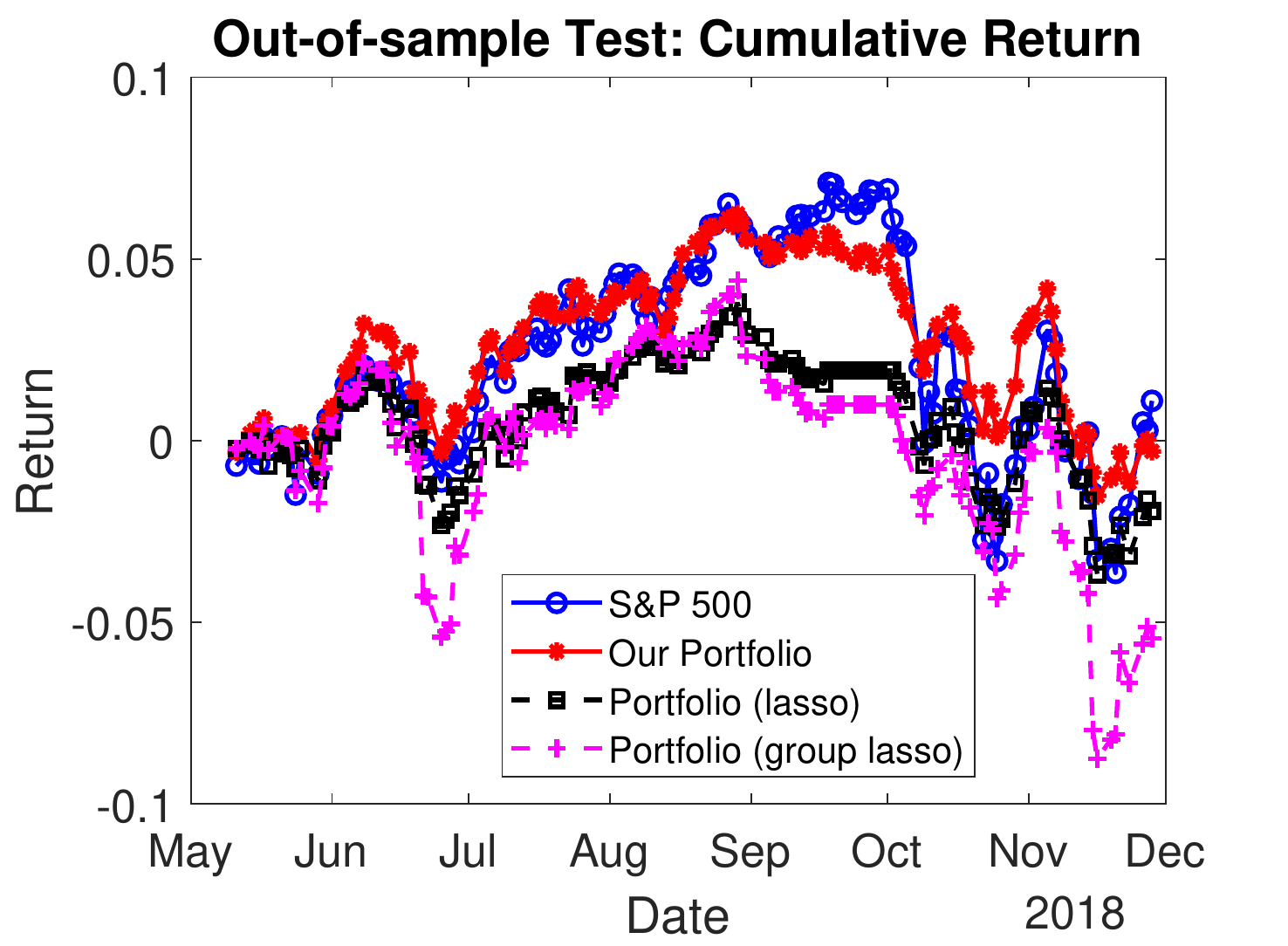}
		\setlength{\abovecaptionskip}{-2pt}
		\setlength{\belowcaptionskip}{-15pt}
		\caption{In-sample and out-of-sample performance of the exclusive lasso, the group lasso and the lasso model for index tracking of S\&P 500.}
		\label{fig: partial-index-tracking}
	\end{center}
\end{figure}

We plot the percentage of stocks from each sector in the portfolio obtained from the three tested models in Figure \ref{fig: pie-fig}. The result shows that the exclusive lasso model can select stocks from all the 12 sectors, but the lasso model selects stocks only from 10 sectors and the group lasso model selects stocks only from 7 sectors in the universe.

\begin{figure}[h]
	\vspace{-0.5cm}
	\hspace{-1cm}
	\subfloat[the exclusive lasso model]{
		\label{fig_eg1}
		\includegraphics[width = 0.55\columnwidth]{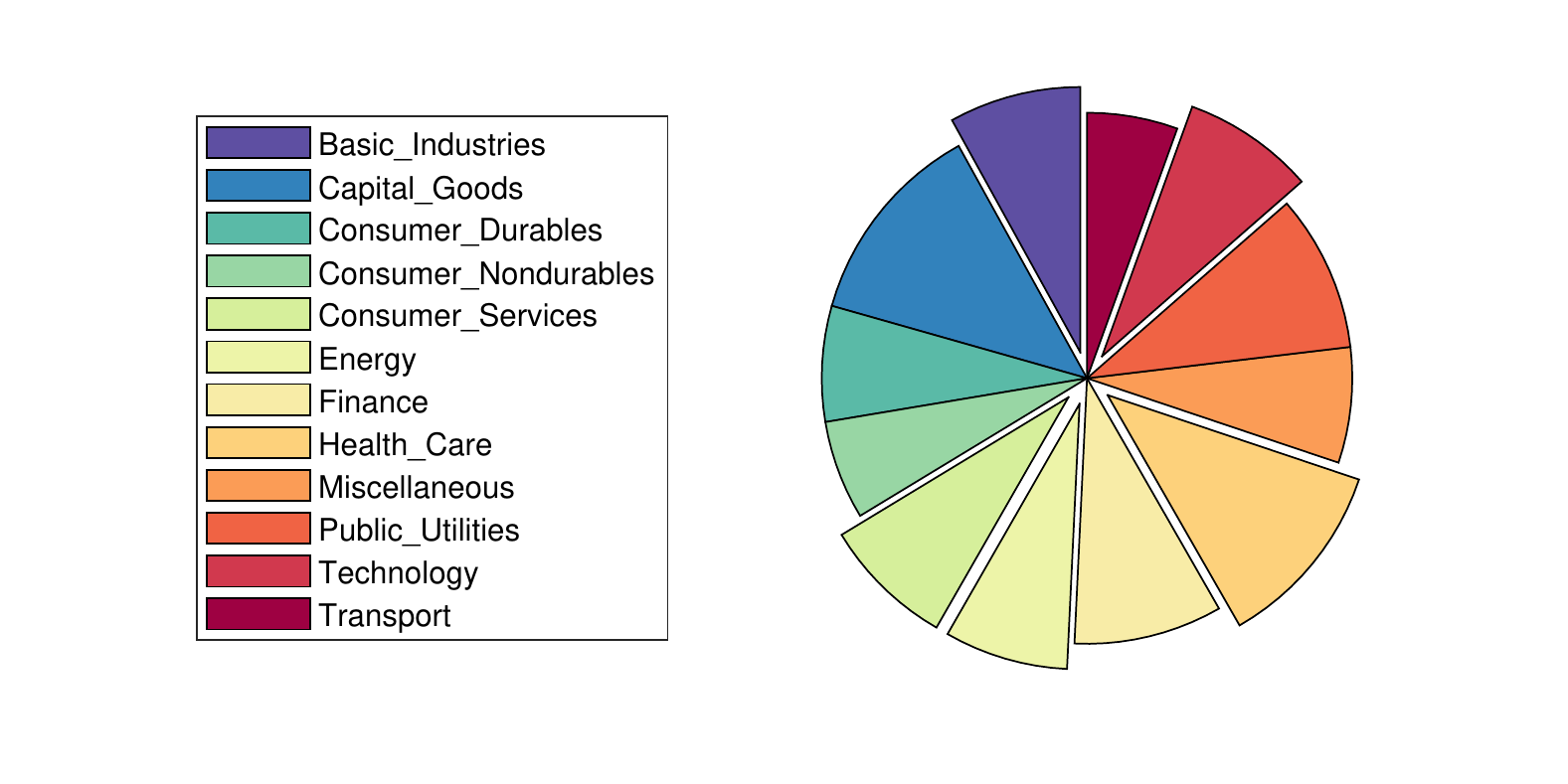}}
	\hspace{-1.2cm}
	\subfloat[the lasso model]{
		\label{fig_eg2}
		\includegraphics[width = 0.28\columnwidth]{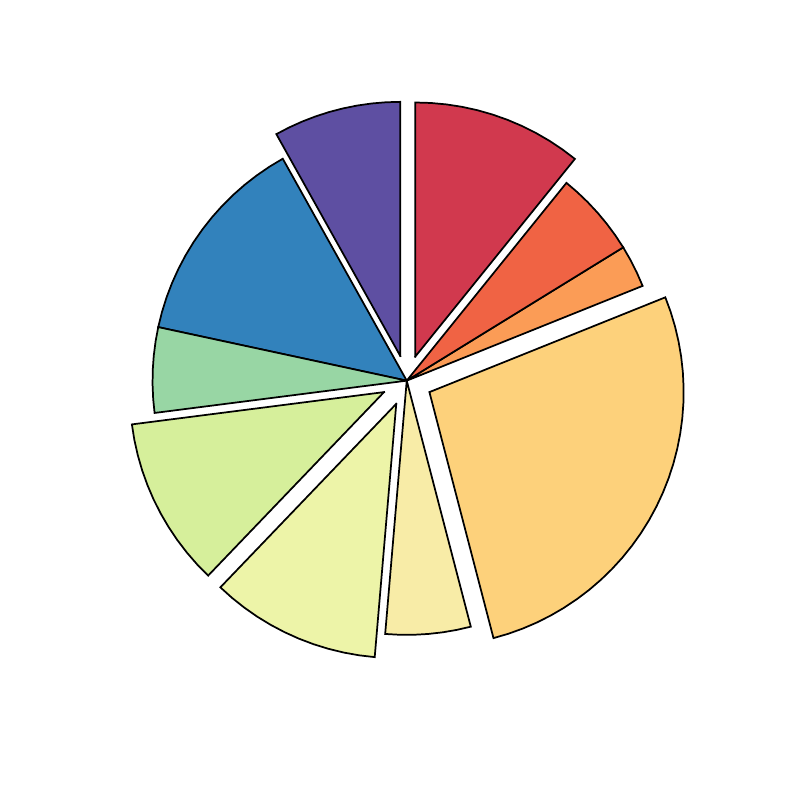}}
	\hspace{-0.8cm}
	\subfloat[the group lasso model]{
		\label{fig_eg3}
		\includegraphics[width = 0.28\columnwidth]{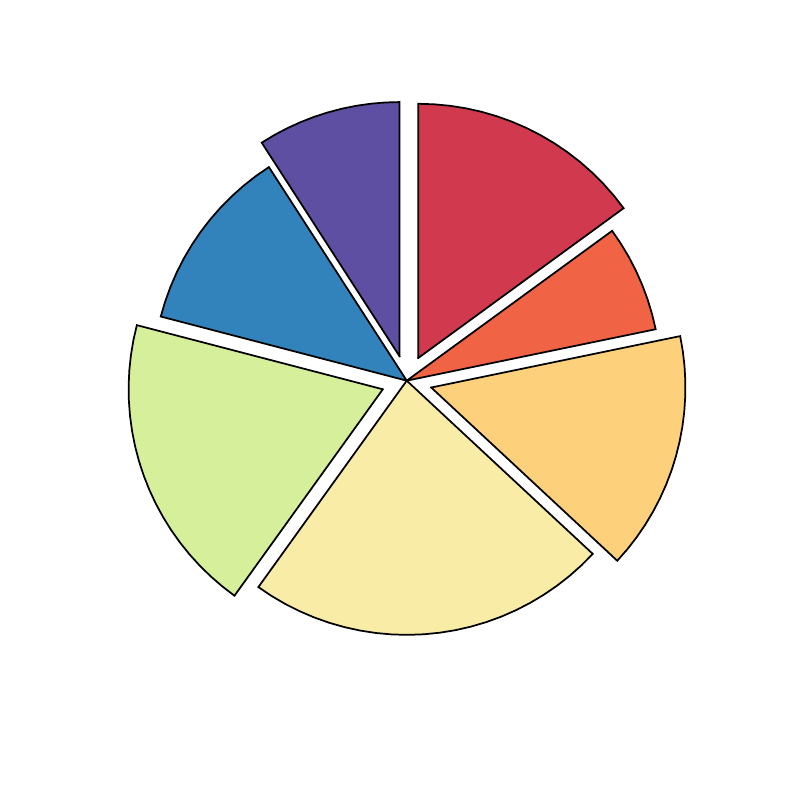}}
	\caption{Percentage of selected stocks by sectors.}
	\label{fig: pie-fig}
\end{figure}

\subsubsection{Image and text classifications}
We test the exclusive lasso model on multi-class classifications. For a given $k$-class classification dataset $\{(a_i, b_i)\}_{i=1}^N$, where $a_i \in \mathbb{R}^{p}$ is the feature vector and $b_i \in \mathbb{R}^k$ is the one-hot representation of the label, the exclusive lasso regression model for this problem \citep{zhou2010exclusive,kong2014exclusive,campbell2017within} is given by:
\begin{align}
	\min_{X\in \mathbb{R}^{p\times k}}\ \Big\{\frac{1}{2}\|AX-b\|_F^2+\lambda \sum_{j=1}^p\|X_{j,:}\|_1^2
	\Big\},\tag{M1}\label{eq: classification_M1}
\end{align}
where $A = [a_1, a_2, \dots, a_N]^T \in \mathbb{R}^{N \times p}$ and $b = [b_1, b_2, \dots, b_N]^T \in \mathbb{R}^{N \times k}$. The key motivation for considering this model is  to capture the negative correlation among the classes. However, the exclusive lasso regularizer may not exclude uninformative features if we penalize $X$ row-wise since it prefers to select at least one representative from each feature group. This phenomenon has also been discussed in a recent paper \citep{ming2019robust}. In our experiments, we consider the following model instead:
\begin{align}
	\min_{X\in \mathbb{R}^{p\times k}}\ \Big\{\frac{1}{2}\|AX-b\|_F^2+\lambda \sum_{j=1}^k\|X_{:,j}\|_1^2
	\Big\}.\tag{M2}\label{eq: classification_M2}
\end{align}
The motivation for considering model \eqref{eq: classification_M2} is that we can do class-wise feature selections, since the informative features for different classes are usually not identical. Also, uninformative features will  automatically be excluded by the nature of class-wise feature selections. In order to show that the new model we suggest is meaningful, we first compare the model performance on two popular real datasets: MNIST \citep{lecun1998gradient} and 20 Newsgroups\footnote{http://qwone.com/$\sim$jason/20Newsgroups/}. We summarize the details of the datasets in Table \ref{tab:real_datasets}. Note that, after vectorization, the target problem size is actually $kN \times kp$.

\begin{table}[H]
	\tbl{Details of real datasets.}
	{\begin{tabular}{lcccccc} \toprule
			Dataset & Num. of  classes $k$ & Num. of samples $N$ & Num. of features $p$ &  \multicolumn{3}{c}{Target problem}   \\
			 & & &  & $(m,n)=(kN,kp)$ & $\sigma_{\max}(A)$ & ${\rm cond}(A)$\\ \midrule
			MNIST & 10 & 60000 & 784 & (600000, 7840) & 1.5e3 & Inf\\
			\midrule
			20 Newsgroups & 20 & 11314 & 26214 & (226280, 524280)& 4.7 & Inf \\
			\bottomrule
	\end{tabular}}
	\label{tab:real_datasets}
\end{table}

\begin{table}[H]
	\tbl{Model comparison on real datasets.}
	{\begin{tabular}{lcccccc} \toprule
			Dataset & Model & $\lambda^*$  & total selected unique features & ${\rm nnz}(X)$ & training accuracy  & testing accuracy \\
			\midrule
			\multirow{2}*{\tabincell{c}{ MNIST } }
			& (M1) & 1.0e-1 & 717 & 1922 & 84.01\% & 84.64\%  \\
			& (M2) & 1.0e-3 & 449 & 1818 & 84.03\% & 84.79\%  \\
			\midrule
			\multirow{2}*{\tabincell{c}{ 20 Newsgroups } }
			& (M1) & 1.0e-3 & 25714 & 27537 & 88.15\% & 77.46\%  \\
			& (M2) & 1.0e-6 & 2789 & 5942 & 91.70\% & 79.14\%  \\
			\bottomrule
	\end{tabular}}
	\label{tab: numerical_results_realdata}
\end{table}

We train \eqref{eq: classification_M1} and \eqref{eq: classification_M2} independently on the two datasets. As prior knowledge, a certain percentage of features are uninformative for these datasets (e.g., background pixels for the MNIST dataset and some uninformative words for the 20 Newsgroups dataset). Thus in each experiment, we set a lower bound for the value of $\lambda$ such that no more than 90\% features are selected by the model. As a result, for the MNIST dataset, we train \eqref{eq: classification_M1} with $\lambda$ in the range from $10$ to $0.1$ and \eqref{eq: classification_M2} with $\lambda$ from $10$ to $10^{-3}$ with grid search and cross-validation. Similarly, for the 20 Newsgroup dataset, we train \eqref{eq: classification_M1} with $\lambda$ from $1$ to $10^{-3}$ and \eqref{eq: classification_M2} with $\lambda$ from $1$ to $10^{-6}$. We summarize the results in Table \ref{tab: numerical_results_realdata} and Figure \ref{fig: real_model_compare}. We can observe that, the classification accuracy of the two models are comparable, but model \eqref{eq: classification_M2} obviously performs better
in terms of feature selections.

\begin{figure}[H]
	\vspace{-0.1cm}
	\flushright
	\includegraphics[width = 1\columnwidth]{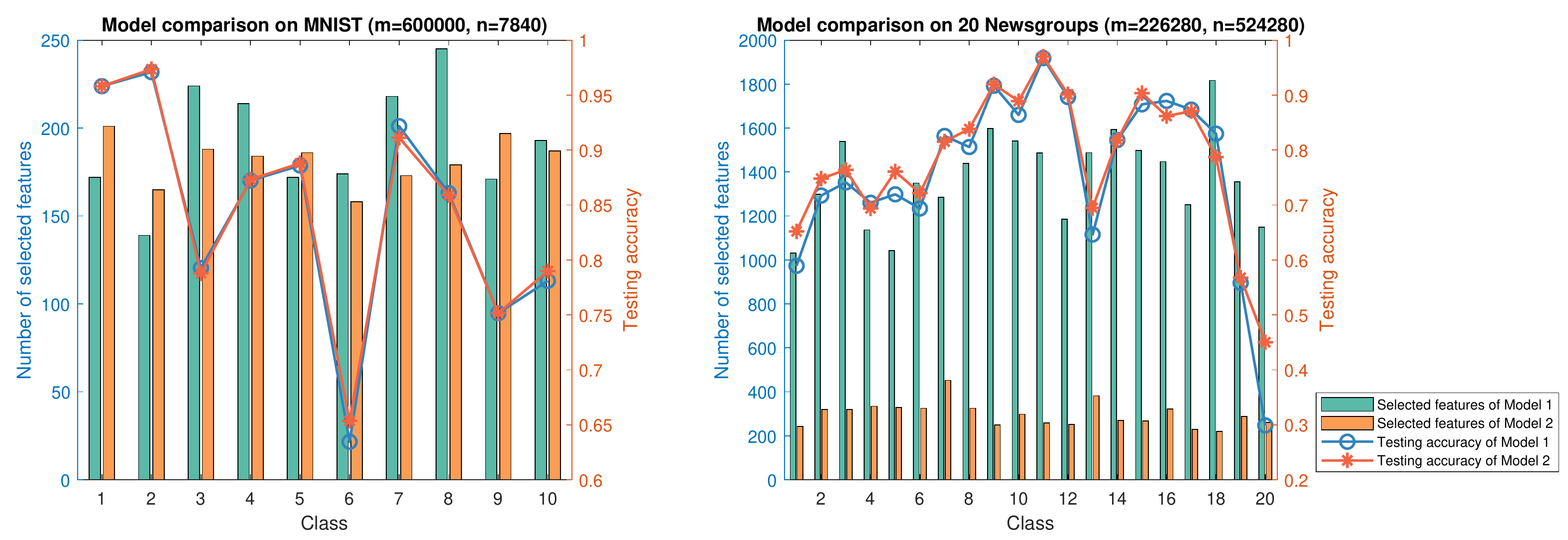}
	\setlength{\abovecaptionskip}{-15pt}
	\setlength{\belowcaptionskip}{-0pt}
	\caption{Model comparison between \eqref{eq: classification_M1} and \eqref{eq: classification_M2}. }
	\label{fig: real_model_compare}
\end{figure}

More importantly, as we can see in Table \ref{tab: numerical_results_realdata} and Figure \ref{fig: real_model_compare}, for the MNIST dataset, although the numbers of features in each class selected by two models are close, the total selected unique features of model \eqref{eq: classification_M2} is much less than that of model \eqref{eq: classification_M1}. This is because a group of important features which are selected by model \eqref{eq: classification_M2} are shared across different classes, which is consistent to our prior knowledge since almost all the targeted digits are located at the center of the images in the MNIST dataset. On the contrary, model \eqref{eq: classification_M1} selects 717 unique features out of the total 784 features, which means it selects a lot of uninformative features.

\begin{figure}[H]
	\begin{center}
		\includegraphics[width = 0.45\columnwidth]{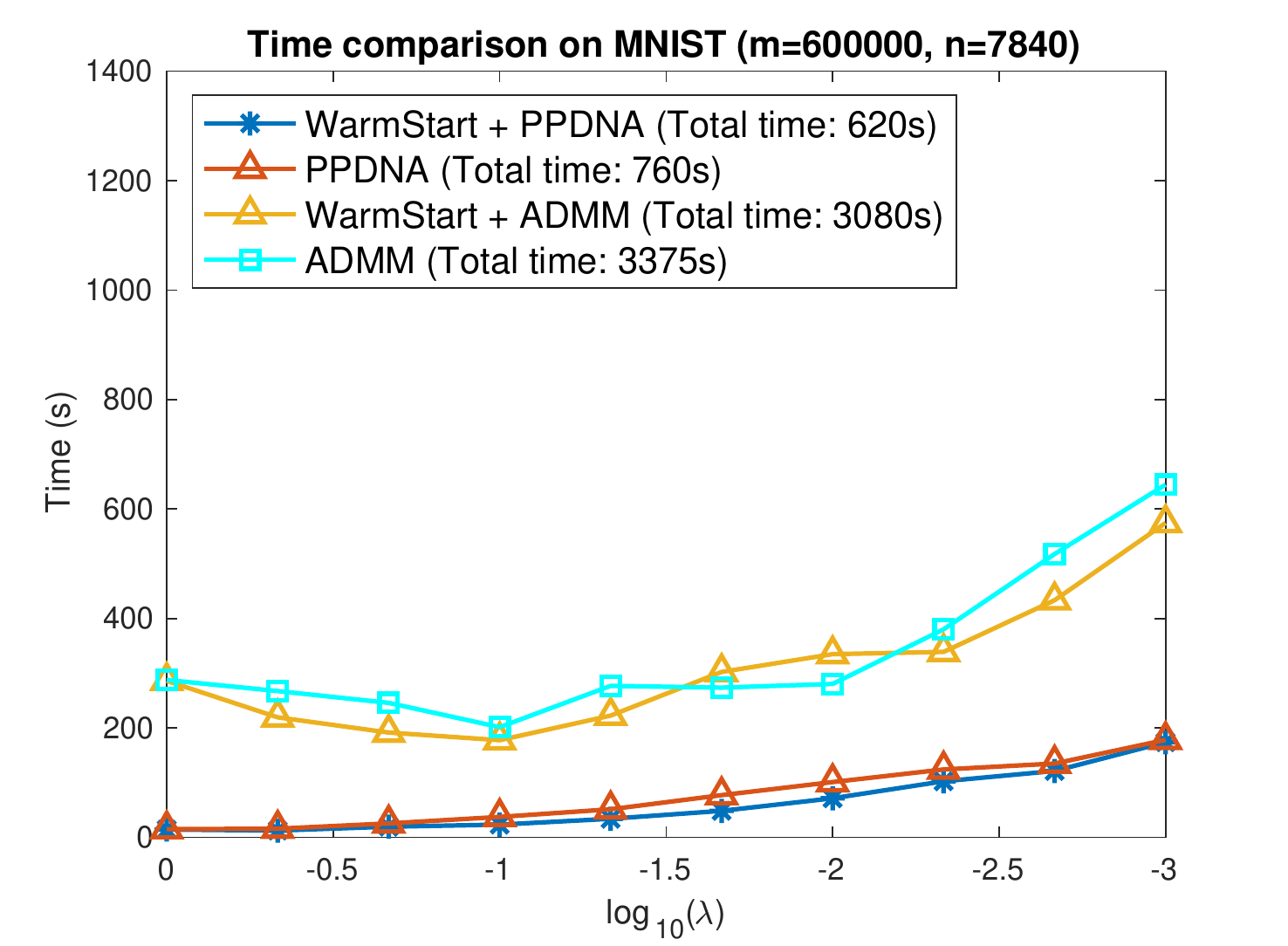}
		\quad
		\includegraphics[width = 0.45\columnwidth]{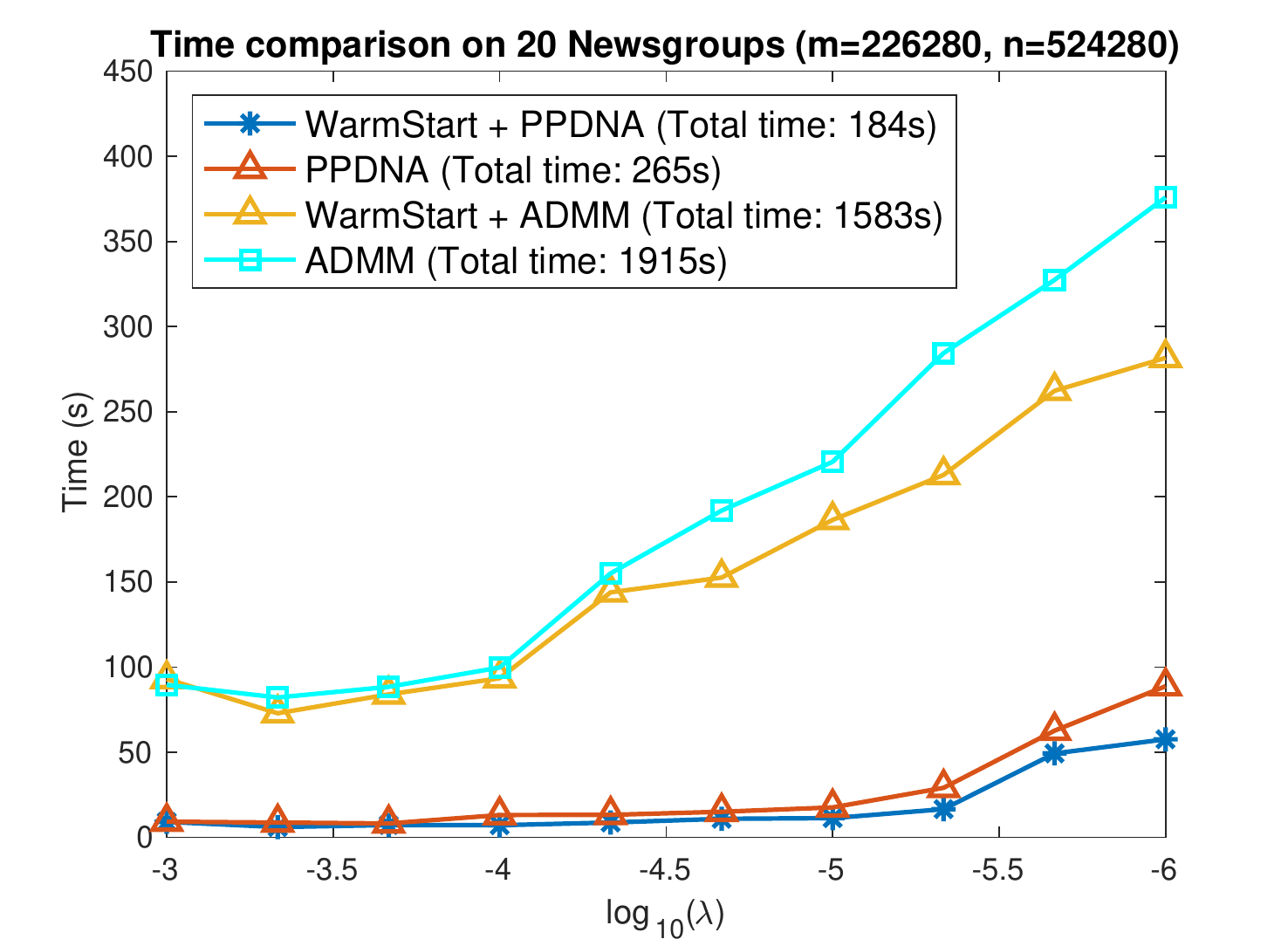}
		\setlength{\abovecaptionskip}{-2pt}
		\setlength{\belowcaptionskip}{-15pt}
		\caption{Time comparison on multi-class classifications.}
		\label{fig: real_data}
		\vspace{-0.6cm}
	\end{center}
\end{figure}

From now on, we focus on model \eqref{eq: classification_M2} and test the efficiency of our proposed PPDNA for solving the model with a sequence of hyper-parameters. For the two datasets, we generate solution paths for $\lambda$ over the range from 1 to $10^{-3}$ with 10 equally divided grid points on the $\log_{10}$ scale, and $10^{-3}$ to $10^{-6}$ with 10 equally divided grid points on the $\log_{10}$ scale, respectively. We compare the computation time for generating the solution path by PPDNA and ADMM with or without warm-start strategy. The results are summarized in Figure \ref{fig: real_data}. On the MNIST dataset, the warm-start strategy can reduce $18\%$ and $9\%$ of the computation time of PPDNA and ADMM, respectively. Moreover, PPDNA (with warm-start) is $4$ times faster than ADMM (with warm-start) on this dataset. On the 20 Newsgroups dataset, the warm-start strategy can accelerate PPDNA and ADMM by $31\%$ and $17\%$, respectively. Moreover, PPDNA (with warm-start) is around $8$ times faster than ADMM (with warm-start) on this dataset.

\section{Conclusion}
\label{sec:conclusion}

In this paper, we design a highly efficient and scalable dual Newton method based preconditioned proximal point algorithm to solve the exclusive lasso models, which is proved to enjoy a superlinear convergence rate. As important ingredients, we systematically study the proximal mapping of the weighted exclusive lasso regularizer and its generalized Jacobian. Numerical experiments show that the proposed algorithm outperforms the state-of-the-art algorithms by a large margin when solving large-scale exclusive lasso problems.


\section*{Funding}
Meixia Lin is supported by The Singapore University of Technology and Design under MOE Tier 1 Grant SKI 2021{\_}02{\_}08. Yancheng Yuan is supported by The Hong Kong Polytechnic University under Grant P0038284. Defeng Sun is supported in part by the Hong Kong Research Grant Council grant PolyU 153014/18P. Kim-Chuan Toh is supported by the Ministry of Education, Singapore, under its Academic Research Fund Tier 3 grant
call (MOE-2019-T3-1-010).



\end{document}